\newtheorem{theorem}{Theorem}[section]
\newtheorem{lemma}[theorem]{Lemma}
\newtheorem{proposition}[theorem]{Proposition}
\newtheorem{remark}[theorem]{Remark}
\newtheorem*{theorem*}{Theorem}
\newtheorem{maintheorem}{Theorem}
\newtheorem{mainproposition}[maintheorem]{Proposition}
\newcommand{\M}{{\mathrm{D}}}
\newcommand{\GG}{{\Gamma}}
\newcommand{\Cen}{{\rm C}}
\newcommand{\Imagen}{\mbox{\rm Im }}
\newcommand{\inv}{\textup{inv}}
\newcommand{\G}{\mathcal{G}}
\newcommand{\GEN}[1]{\left\langle #1 \right\rangle}
\newcommand{\aug}[1]{\mathrm{I}(#1)}
\newcommand{\augNor}[2]{\mathrm{I}(#1)k#2}
\newcommand{\Aug}[2]{\mathrm{J}^{#1}(#2)}
\newcommand{\ZZ}{\mathrm{Z}}
\newcommand{\Ese}[2]{\mathcal{S}\left(#1\mid #2\right)}
\newcommand{\Ge}{\Gamma}
\DeclareMathOperator{\Cl}{Cl}
\newcommand{\qand}{\quad \text{and} \quad}
\title[On the Modular Isomorphism Problem for  2-generated groups with cyclic derived subgroup]{On the Modular Isomorphism Problem for  \\ 2-generated groups with cyclic derived subgroup}
\author{Diego Garc\'{\i}a-Lucas}
 \address{Departamento de Matem\'aticas, Universidad de Murcia, Spain}
\email{diego.garcial@um.es}
\author{\'{A}ngel del R\'{i}o}
 \address{Departamento de Matem\'aticas, Universidad de Murcia, Spain}
\email{adelrio@um.es}
\thanks{Partially supported by Grant PID2020-113206GB-I00 funded by MCIN/AEI/10.13039/501100011033  and by Grant Fundación Séneca 22004/PI/22. }
\keywords{Finite $p$-groups, modular group algebra, invariants, Modular Isomorphism Problem.}
\subjclass{20D15}
\date{\today}
\begin{document}

\begin{abstract}
We continue the analysis of the Modular Isomorphism Problem for $2$-generated $p$-groups with cyclic derived subgroup, $p>2$, started in \cite{GLdRS2022}. We show that if $G$ belongs to this class of groups, then the isomorphism type of the quotients $G/(G')^{p^3}$ and $G/\gamma_3(G)^p$ are determined by its modular group algebra. In fact, we obtain a more general but  technical result, expressed in terms of the classification \cite{OsnelDiegoAngel}. We also show that for groups in this class of order at most $p^{11}$, the Modular Isomorphism Problem has positive answer. Finally, we describe  some families of groups of order $p^{12}$  whose group  algebras over the field with $p$ elements cannot be distinguished with the  techniques available to us.
\end{abstract}

\maketitle

%\section{Introduction}

Let $R$ be a commutative ring and let $G$ and $H$ be finite groups. The Isomorphism Problem for group rings asks whether the existence of an isomorphism of $R$-algebras between the group rings $RG$ and $RH$ implies the existence of an isomorphism between the groups $G$ and $H$ themselves.  This problem has received special attention in the cases where $R=\mathbb Z$, the ring of integers (this  case  already appeared in G.~Higman's thesis \cite{HigmanThesis}), and where $R$ runs over all fields (this is Brauer's Problem 2 in \cite{Bra63}). Now it is known that both questions have negative answer, the second one due to an example of E.~Dade \cite{Dade71}, and the first one to one of M.~Hertweck \cite{Hertweck2001}.  Moreover, D. Passman showed that for every prime $p$ and every positive integer $n$, there are at least $p^{2(n^3-23n^2)/27}$ non-isomorphic groups with isomorphic group algebras over each field of characteristic coprime with $p$  \cite{Passman65}.

 The examples above led to the  Modular Isomorphism Problem, which  consists in the version of the Isomorphism Problem    under the additional hypotheses that $G$ and $H$ are finite $p$-groups, and $k$ a field of characteristic $p$. It was popularized by R.~Brauer  in his 1963 survey \cite{Bra63}, where it is suggested that it  ``may be   much   easier to study Problem~2 for this particular case.''   The first known partial positive result on this problem is due to W.~E.~Deskins \cite{Deskins1956}  and deals with the class of abelian finite $p$-groups. Since then, it  received considerable attention, and some of partial positive solutions were obtain by a number of authors.  For instance, the Modular Isomorphism Problem has positive solution for metacyclic groups \cite{BaginskiMetacyclic,San96}. 
 However, this problem also has negative answer in general \cite{GarciaMargolisdelRio}. An interested reader can find an almost up-to-date state of the art on the Modular Isomorphism Problem in L.~Margolis' recent survey \cite{Mar22}.

The class of $2$-generated finite $p$-groups with cyclic derived subgroup, despite its apparent simplicity, has proven to be a rich class of $p$-groups, specially regarding the Modular Isomorphism Problem: the only known indecomposable groups to fail to satisfy the statement of this problem are $2$-groups that belong to this class (see \cite{GarciaMargolisdelRio}, which contains the first known examples, and the new ones obtained in \cite{MS24, BZ24}), while for $p>2$, the situation  being quite different,   the problem is still to be decided. Our main result settles the Modular Isomorphism Problem in the positive for groups of this class under   additional constraints  on the size of the initial terms of the lower central series:
\begin{maintheorem}\label{theorem1}
	Let $p$ be an odd prime, let $k$ be the field with $p$ elements and let $G$ be a $2$-generated finite $p$-group with cyclic derived subgroup. If $kG\cong k H$ for some group $H$, then
	\begin{enumerate}
		\item  \label{theorem1.1}$ G/\gamma_3(G)^p \cong H/\gamma_3(H)^p$ and
		\item \label{theorem1.2}$ G/(G')^{p^3}\cong H/(H')^{p^3}$.
	\end{enumerate} 
\end{maintheorem}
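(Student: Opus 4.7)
The plan is to deduce Theorem~\ref{theorem1} from a stronger technical statement, as promised in the abstract, expressed in the parametric classification of \cite{OsnelDiegoAngel}. In that classification, each $2$-generated finite $p$-group with cyclic derived subgroup is encoded by a tuple of integer parameters $(m, n_1, n_2, s_1, s_2, \dots)$, with $p^m = |G'|$, $p^{n_1}$ and $p^{n_2}$ the orders of a distinguished generating pair modulo $G'$, and the remaining entries encoding the commutator relations. Both $G/\gamma_3(G)^p$ and $G/(G')^{p^3}$ are themselves in the class under study (or abelian), so the classification applies to them and their isomorphism type is read off from a combinatorial truncation of this tuple. The goal therefore reduces to showing that each coordinate of that truncation is an invariant of the $k$-algebra $kG$.

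Several ingredients are already available. The quotient $G^{ab}$ is recovered from $kG/[kG,kG]$ by Deskins, giving $n_1$ and $n_2$; standard invariants yield $|G|$ and the exponent of $Z(G)$, pinning down $m$. The precursor paper \cite{GLdRS2022} supplies coarse residues of the commutator parameters $s_i$, enough to settle part~\eqref{theorem1.1} when $m$ is small and part~\eqref{theorem1.2} in a correspondingly small range of $m$. The new content is to push the $p$-adic precision of the $s_i$ one level further in the remaining range of $m$.

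For this I would work with the Jennings filtration of the augmentation ideal $J = J(kG)$, its associated graded algebra, and Bagi\'nski--Sandling style invariants attached to the center. The key point is that if $c$ is a generator of $G'$, the Jennings layer in which $c^{p^i}-1$ sits is an invariant of $kG$ and pins down a residue of the relevant $s_j$ modulo a prescribed power of $p$. Combining these layer positions with the Jennings heights of $[a,b]^{p^i}-1$, with the dimensions of $Z(kG)\cap J^n$, and with the dimensions of the $k$-subalgebras generated by well chosen central elements, should supply every parameter needed to read off the quotients appearing in \eqref{theorem1.1} and \eqref{theorem1.2}.

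The main obstacle is the familiar one for incremental MIP results: distinguishing groups whose parameter tuples agree on every previously known invariant but differ only at the next level of the $p$-adic filtration of the $s_i$. The argument will require a careful case split according to the relative sizes of $m, n_1, n_2$ and, within each branch, occasionally an ad hoc invariant tailored to that branch rather than a black-box consequence of Jennings theory; the extremal configurations (for instance $n_2=0$, or $s_1$ near the maximum allowed by the remaining parameters) are the ones I expect to absorb most of the work. I also anticipate that a handful of parameter configurations will remain indistinguishable by these methods, which is exactly what the paper announces at the end via the $p^{12}$ examples.
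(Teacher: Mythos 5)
Your overall architecture matches the paper's: Theorem~\ref{theorem1} is indeed deduced from a technical theorem (\Cref{theorem2}) phrased in the parameters of \cite{OsnelDiegoAngel}, and the reduction to quotients is done essentially as you suggest, though more cleanly --- since $\psi(\augNor{G'}{G})=\augNor{H'}{H}$, the isomorphism $\psi$ induces isomorphisms $k(G/(G')^{p^n})\cong k(H/(H')^{p^n})$, so one simply replaces $G$ by the relevant quotient rather than computing a ``combinatorial truncation'' of the parameter tuple (which would require knowing how $\inv(G/\gamma_3(G)^p)$ is expressed in terms of $\inv(G)$, a nontrivial bookkeeping problem your sketch does not address).

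The genuine gap is in the engine. The invariants you propose --- Jennings heights of $c^{p^i}-1$, dimensions of $\ZZ(kG)\cap \aug{G}^n$, abelianization, exponent of the center --- are precisely the ones already exploited in \cite{GLdRS2022}, and they determine every entry of $\inv(G)$ \emph{except} $u_1$ and $u_2$; ``pushing the $p$-adic precision one level further'' is the entire difficulty, and no mechanism for it is given. What the paper actually constructs is a family of canonical normal subgroups $N$ (such as $\Omega_n(G:G')$, $\Omega_n(G:\ZZ(G)G')$, $\Cen_G(G')$), canonical $p^n$-power maps on the relative subquotients $\augNor{N}{G}/\aug{N}\aug{G}$ (well-definedness requires the ideals $\Aug{n}{N,G}$ and \Cref{UsefiFrattini}), and then exhibits $u_i$ (up to the unit $\delta$) as the \emph{unique scalar} $X$ making a diagram of canonical maps commute, e.g.\ equation~\eqref{diagram1}. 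Nothing of this shape appears in your plan, and without it the congruences $u_i^G\equiv u_i^H\bmod p$ are not obtained. A second, smaller omission: part~\eqref{theorem1.2} is not a direct corollary of the technical theorem; the case $n_2\le 2$ requires \Cref{lemma:n2} (equivalently \Cref{GG'p2}), which in turn needs a separate bootstrapping lemma (\Cref{MIPUesHigherPower}) lifting the congruence from modulo $p$ to modulo higher powers of $p$ under special numerical hypotheses. Your sketch correctly anticipates a case split and residual undecided families at order $p^{12}$, but as written it is a research plan rather than a proof: the decisive constructions are missing.
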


This result fails for $p=2$ because the counter-example in \cite{GarciaMargolisdelRio} is formed by groups with derived subgroup of order $4$. 
The proof of \Cref{theorem1} is based upon a more technical result in terms of the invariants described in \cite{OsnelDiegoAngel}, that resumes the work started in \cite{GLdRS2022}. 
Namely, with the notation in \Cref{SectionApp2Gen}, we prove the following theorem. 

\begin{maintheorem}\label{theorem2}
Let $p$ be an odd prime, let $k$ be the field with $p$ elements and  let $G$ be a  $2$-generated finite $p$-group with cyclic derived subgroup and
	$$\inv(G)=(p,m,n_1,n_2,o_1,o_2,o'_1,o'_2,u_1^G,u_2^G).$$
If $k G\cong k H$ for some group $H$, then  $H$ is also a $2$-generated finite $p$-group with cyclic derived subgroup and
	$$\inv(H)=(p,m,n_1,n_2,o_1,o_2,o'_1,o'_2,u_1^H,u_2^H) $$
	such that
		$$u_2^G\equiv u_2^H \mod p, $$
	and one of the following holds:
	\begin{enumerate}
		\item \label{theorem2.1} $u_1^G\equiv u_1^H\mod p$.
		\item\label{theorem2.2} $o_1o_2>0$, $n_1+o_1'=n_2+o_2'$ and at least one of the following conditions fails:
		\begin{itemize}
			\item $u_2^G\equiv u_2^H \equiv 1 \mod p^{o_1+1-o_2}$,
			\item $n_2+o'_2=2m-o_1$.
		\end{itemize}
	\end{enumerate}
\end{maintheorem}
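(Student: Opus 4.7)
The plan is to layer the argument on top of the framework established in \cite{GLdRS2022} together with the parametric classification of \cite{OsnelDiegoAngel}. The first step, which I expect can largely be cited from \cite{GLdRS2022} with only minor additions, is to establish that whenever $kG\cong kH$ the group $H$ is also a $2$-generated finite $p$-group with cyclic derived subgroup, and that the ``coarse'' portion $(p,m,n_1,n_2,o_1,o_2,o'_1,o'_2)$ of $\inv(\cdot)$ is detected by $kG$. This reduces the task to identifying the residues of $u_1$ and $u_2$ modulo~$p$, which is where the new analysis is needed.

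The congruence $u_2^G\equiv u_2^H\pmod p$ should come from analyzing a canonical quotient of $kG$ in which $u_2$ appears as an explicit coefficient: concretely, in a quotient by an ideal that recovers $(G')^{p^{o_2}}$ intrinsically from the algebra, the coefficient of a distinguished basis element in an expansion of $[\widetilde{a},\widetilde{b}]^{p^{o_2}}$ reads off $u_2\bmod p$. The key subtask here is to show that both the ideal and the distinguished basis element are algebraically characterizable, for instance via dimension counts of eigenspaces for canonical maps such as the $p$-power map, or via subspaces cut out by canonical derivations coming from the structure of the Jennings filtration. The analogous recipe applies in principle to $u_1$, but the relevant commutator sits deeper in the lower central series and its coefficient becomes entangled with $u_2$, which is the source of the dichotomy in the statement.

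This entanglement is exactly what the exceptional case captures: under the conjunction $o_1o_2>0$, $n_1+o'_1=n_2+o'_2$, and failure of one of the two bulleted conditions, a swap of the two generators combined with a twist of $u_1$ yields an algebra isomorphism, so only the residue of $u_1$ up to this symmetry can possibly be algebra-intrinsic. Outside this regime I would construct a group-algebra quantity, again through a codimension count in a canonical quotient, that breaks the generator symmetry and thereby pins down $u_1^G\bmod p$. The main obstacle is precisely this dichotomy: one must simultaneously exhibit explicit algebra isomorphisms that realize the ambiguity in case~(2), and, in its complement, an algebra-intrinsic invariant that fixes $u_1\bmod p$. A fine-grained case analysis through the normal forms of \cite{OsnelDiegoAngel}, tracking how each of the listed conditions interacts with the presentation, is where the bulk of the technical work will lie.
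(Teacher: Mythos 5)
Your opening step coincides with the paper's: Theorem~C of \cite{GLdRS2022} gives that $H$ is $2$-generated with cyclic derived subgroup and that $\inv(G)$ and $\inv(H)$ agree except possibly in the last two entries, so everything reduces to the residues of $u_1,u_2$ modulo $p$. Beyond that point, however, the proposal is a programme rather than a proof, and two of its guiding ideas are off target. First, the mechanism you suggest for $u_2$ does not match how $u_2$ enters the presentation: $u_2$ is defined by the relation $b_2^{p^{n_2}}=a^{u_2p^{m-o'_2}}$, so it is the coefficient tying a $p$-th power of a \emph{generator} to a power of the commutator, not a coefficient in an expansion of a power of the commutator itself. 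The paper extracts it by choosing a canonical normal subgroup $N_\Gamma$ (an $\Omega$-subgroup relative to $\Gamma'$ or to $\mathrm{Z}(\Gamma)\Gamma'$, depending on the signs of $o_1,o_2$), proving $\mathrm{J}^{n}(N_\Gamma,\Gamma)=\mathrm{I}(N_\Gamma)^n\mathrm{I}(\Gamma)$ so that the relative power map $\Lambda^{\ell}_{N_\Gamma}$ with $\ell=n_t+o'_t-2$ is defined on a canonical two-dimensional subquotient and sends the class of $d-1$ to the class of $u_t^\Gamma(a^{p^{m-1}}-1)$; the residue $u_t^\Gamma\bmod p$ then emerges as the unique scalar $X$ solving an equation between two canonical composite maps, after also controlling the auxiliary unit $\delta$ coming from the generator $c$ of the center. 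None of this machinery, which is where all the difficulty lives, is indicated in your sketch.

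Second, your reading of the dichotomy is backwards. You propose to ``exhibit explicit algebra isomorphisms that realize the ambiguity in case (2)''; this is neither needed for the stated implication nor known to be possible --- the paper's closing remark explicitly records that for the surviving pairs it is still open whether the group algebras are isomorphic. Case (2) is simply the complement of the cases the method handles. What the proof actually requires is to establish $u_1^G\equiv u_1^H\bmod p$ on the complement of case (2), which the paper splits into three regimes, each with its own canonical construction: (i) $o_1o_2=0$, handled by the same mechanism as $u_2$; (ii) $n_1+o'_1\neq n_2+o'_2$, handled via a projection $\zeta^3_\Gamma$ modulo an auxiliary $\Omega$-subgroup $M_\Gamma$, which separates $b_1^{p^{m-o_2}}$ from the rest only because of the inequality $n_1-n_2+m-o_1>m-o_2$; and (iii) the case where both bulleted conditions hold, which hinges on the group-theoretic identity $c^{p^{n_1+o'_1-1-m+o_2}}=a^{-\delta u_1^\Gamma p^{m-1}}$, itself dependent on $\delta\equiv-1\bmod p^{o_1+1-o_2}$ and on the hypothesis $u_2^\Gamma\equiv 1\bmod p^{o_1+1-o_2}$. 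Your sketch does not identify this case division, and in particular does not explain why the argument of regime (ii) must break down precisely when $n_1+o'_1=n_2+o'_2$ (the kernel of the auxiliary power map ceases to be contained in the kernel of the target map), which is exactly what forces the exceptional case into the statement.
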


Observe that if $G$ and $H$ are as in the previous theorems, then  $G\cong H$ if and only if $\inv(G)=\inv(H)$, so \Cref{theorem2} is another step towards a solution of the Modular Isomorphism Problem for our target class of groups.
As an application we obtain a positive answer for the Modular Isomorphism Problem for $2$-generated $p$-groups with $p>2$ having  cyclic derived subgroup and order at most $p^{11}$. Moreover, for groups of order $p^{12}$ we also obtain a positive solution except for $p-2$ families of containing $p$ groups each. For arbitrary $p$-groups, the Modular Isomorphism Problem is known to have a positive solution for groups of order up to $p^6$ (cf. \cite[Sec. 9.3]{Mar22}).
As a byproduct of the proof of \Cref{theorem1}, be obtain the following proposition which may be of interest by itself. We do not know whether the hypothesis $p>2$ is needed.

\begin{mainproposition}\label{GG'p2}
	Let $G$ be a $2$-generated finite $p$-group with cyclic derived subgroup. Suppose that $p>2$ and $(G/G')^{p^2}$ is cyclic. If $kG\cong kH$ for some group $H$ then $G\cong H$.
\end{mainproposition}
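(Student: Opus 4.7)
The plan is to derive Proposition~\ref{GG'p2} essentially as a corollary of \Cref{theorem2} together with the remark immediately following it, namely that two groups in the class with the same tuple $\inv(\,\cdot\,)$ are already isomorphic. So write $\inv(G)=(p,m,n_1,n_2,o_1,o_2,o'_1,o'_2,u_1^G,u_2^G)$; from the hypothesis $kG\cong kH$, \Cref{theorem2} gives that $H$ is again $2$-generated with cyclic derived subgroup, sharing with $G$ all of $p,m,n_1,n_2,o_1,o_2,o'_1,o'_2$ and satisfying $u_2^G\equiv u_2^H\pmod p$, and either (1) $u_1^G\equiv u_1^H\pmod p$ or (2) a list of arithmetic relations between the $n_i,o_i,o_i'$ holds. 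The whole task is to improve both congruences to equalities, in which case $\inv(G)=\inv(H)$ and the proposition follows.

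The first step is to translate the hypothesis that $(G/G')^{p^2}$ is cyclic into the language of the invariants. Since $G/G'$ is a $2$-generated abelian $p$-group, its two invariant factors are cyclic groups whose orders can be read off from $(m,n_1,n_2,o_1,o_2,o'_1,o'_2)$ as used in \cite{OsnelDiegoAngel}; the cyclic hypothesis is equivalent to the smaller invariant factor having exponent at most $p^2$. I would next use this smallness to dispose of alternative (2) of \Cref{theorem2}: that case demands $o_1o_2>0$ together with $n_1+o'_1=n_2+o'_2$ and further size restrictions that, as one can check directly against the formula for $G/G'$, cannot coexist with one invariant factor of $G/G'$ having exponent $\le p^2$. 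Thus we are in case (1) and obtain $u_1^G\equiv u_1^H\pmod{p}$.

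The remaining step is to upgrade the two mod-$p$ congruences on the $u_i$ to equalities. Here the point is that $u_i^G$ and $u_i^H$ are, by construction in \cite{OsnelDiegoAngel}, representatives of classes modulo fixed powers of $p$ whose exponents are controlled by the invariant factors of $G/G'$ (and by the $o_i,o_i'$). Under the cyclic hypothesis on $(G/G')^{p^2}$, these exponents collapse to values small enough that the mod-$p$ residue already identifies the class uniquely, so $u_1^G=u_1^H$ and $u_2^G=u_2^H$. Putting these equalities together with the already-shared invariants gives $\inv(G)=\inv(H)$ and hence $G\cong H$.

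The main obstacle I expect is purely bookkeeping in the second and third steps: one has to open the definitions of $\inv$ and of the normalized ranges of the $u_i$ from \cite{OsnelDiegoAngel}, match them against the cyclic hypothesis, and verify both that case (2) of \Cref{theorem2} is excluded and that the relevant moduli for the $u_i$ degenerate to $p$. No new conceptual input beyond \Cref{theorem2} should be needed, which is consistent with the proposition being stated as a byproduct of the proof of \Cref{theorem1}.
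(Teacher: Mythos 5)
Your overall strategy (reduce to $n_2\le 2$ via the observation that the cyclic hypothesis on $(G/G')^{p^2}$ is exactly $n_2\le 2$, then pin down $u_1^\GG$ and $u_2^\GG$ starting from \Cref{theorem2}) matches the paper's reduction of \Cref{GG'p2} to the lemma ``$n_2\le 2$ implies $G\cong H$'', and your treatment of $u_1$ is essentially recoverable: for $m\ge 3$ condition \ref{4} forces $n_2=2m-o_1-o_2'$, hence $o_1=o_2'=m-1$ and $o_1'\le 1$, so $1\le u_1^\GG\le p$ and a mod-$p$ congruence suffices. Note, however, that alternative (2) of \Cref{theorem2} is not simply ``excluded'' by arithmetic as you propose; the paper shows instead that when it occurs one has $o_1o_2>0$, hence $n_1>m$ by \eqref{eq:n1=m}, hence $o_1'=0$ and $u_1^G=1=u_1^H$ directly. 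That is a repair rather than an obstruction, but it is a step your sketch would still have to supply.

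The genuine gap is in your third step, for $u_2$. The claim that under the cyclic hypothesis ``the relevant moduli for the $u_i$ degenerate to $p$'', so that the mod-$p$ residue identifies $u_2$, is false. In the only nontrivial case $m\ge 3$ (the case $m=2$ follows from \Cref{theorem1}\eqref{theorem1.1}), one is forced into $o_1=o_2'=m-1$, and in the surviving subcase $o_2=0$ one computes $a_2=m-1$ (or $m-2$), so $u_2^\GG$ ranges over up to $p^{m-1}$ admissible values; moreover condition \ref{4}(b) already gives $u_2^\GG\equiv 1\bmod p^{m-2}$, so \emph{every} admissible value of $u_2^\GG$ is congruent to $1$ modulo $p$ and the congruence $u_2^G\equiv u_2^H\bmod p$ supplied by \Cref{theorem2} carries no information at all. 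The paper closes exactly this gap with a separate canonical-invariant argument, \Cref{MIPUesHigherPower}, which uses the $p$-power map $\Upsilon^{m-t-1}_\GG$ on $\ZZ(\aug{\GG})$ to bootstrap $u_2^G\equiv u_2^H\equiv 1\bmod p^{t}$ to a congruence modulo $p^{t+1}$; applied with $t=m-2$ this yields $u_2^G\equiv u_2^H\bmod p^{m-1}$ and hence equality. So, contrary to your closing remark, new conceptual input beyond \Cref{theorem2} \emph{is} needed, and without it your argument does not determine $u_2$.
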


The paper is organized as follows. In \Cref{SectionPreliminaries} we establish the notation and prove some general auxiliary results. In the remainder of the paper, $p$ is an odd prime and all the groups are $2$-generated finite $p$-groups with cyclic derived subgroup. 
In \Cref{SectionApp2Gen} we recall the classification of such groups from \cite{OsnelDiegoAngel} and establish some basic facts for these groups and their group algebras. 
In \Cref{SectionProofs} we prove Theorems~\ref{theorem1} and \ref{theorem2}.
Finally, in \Cref{SectionApplications} we prove the mentioned results about groups of small order.

\section{Preliminaries}\label{SectionPreliminaries}
 
Throughout the paper, $p$ denotes an odd prime number, $k$ is the field with $p$ elements, $G$ is a finite $p$-group and $N$ is a normal subgroup of $G$.
The group algebra of $G$ over $k$ is denoted by $kG$ and its \emph{augmentation ideal} is denoted by $\aug{G}$. It is a classical result that $\aug{G}$ is also the Jacobson ideal of $kG$.  If $C$ is a subset of $G$ then $\hat C = \sum_{c\in C} c\in kG$. It is well known that the center $\ZZ(kG)$ is the $k$-span of the class sums $\hat C$ with $C$ running on the set $\Cl(G)$ of conjugacy classes of $G$.  The rest of group theoretical notation is mostly standard: $[g,h]=g^{-1}h^{-1}gh$ for $g,h\in G$, $|G|$ denotes the order of $G$, $\ZZ(G)$ its center, $\{\gamma_i(G)\}_{i\geq 1}$ its lower central series and $G'=\gamma_2(G)$  its commutator subgroup.
For $n\geq 1$, we denote by $C_n$   the cyclic group of order $n$.  
Moreover, if $g\in G$ and $X\subseteq G$ then $|g|$ denotes the order of $g$ and $\Cen_G(X)$ the centralizer of $X$ in $G$.
For a subgroup $A$ of $G$, we denote $A^n=\GEN{a^n:x\in A}$.
If $A$ is normal cyclic subgroup of $G$, then $\aug{A^{p^n}}=\aug{A}^{p^n}$ and hence $(\augNor{A}{G})^{p^n}=\aug{A}^{p^n}kG=\augNor{A^{p^n}}{G}$.

We take the following the following notation from \cite{OsnelDiegoAngel} for integers $s,t$ and $n$ with $n \ge 0$:
	$$\Ese{s}{n} = \sum_{i=0}^{n-1} s^i.$$

We will use the following elementary lemma.

\begin{lemma}\label{ClCoset}
If $G$ is a finite $p$-group with cyclic derived subgroup and $p>2$, then every conjugacy class of
$G$ is a coset modulo a subgroup of $G'$.
\end{lemma}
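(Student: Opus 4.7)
The strategy is to show that for each $x \in G$, the set $[x,G] := \{[x,g] : g\in G\}$ is in fact a subgroup of $G'$. Since the conjugacy class of $x$ in $G$ equals $x \cdot [x,G]$, this immediately gives the statement.

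Let $L = \Cen_G(G')$. Because $G'$ is a cyclic $p$-group and $p$ is odd, the Sylow $p$-subgroup of $\Aut(G')$ is cyclic, so $G/L$, which embeds into $\Aut(G')$, is cyclic; write $G = L\langle g_0\rangle$. The commutator identity $[x, \alpha\beta] = [x,\beta][x,\alpha]^\beta$ applied with $\beta \in L$ reads $[x, \alpha\beta] = [x,\beta][x,\alpha]$ (since elements of $L$ centralize $[x,\alpha] \in G'$), which shows both that $\ell \mapsto [x,\ell]$ is a homomorphism $L \to G'$, so that $H := [x,L]$ is a subgroup of $G'$, and that, writing an arbitrary element of $G$ as $g_0^j \ell$, $[x,G] = \{[x,g_0^j] : j \in \Z\} \cdot H$.

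It therefore suffices to show $\{[x,g_0^j] : j \in \Z\} = \GEN{[x,g_0]}$. Let $r \in (\Z/|G'|\Z)^*$ encode the conjugation action of $g_0$ on $G'$; the same identity yields, by induction on $j$, $[x,g_0^j] = [x,g_0]^{\Ese{r}{j}}$, placing all these elements in $\GEN{[x,g_0]}$. For the reverse inclusion my plan is to show that the map $j \mapsto [x,g_0^j]$ has period exactly $|[x,g_0]|$, which forces the image to exhaust $\GEN{[x,g_0]}$. Because $r$ lies in the Sylow $p$-subgroup of $(\Z/|G'|\Z)^*$ and $p$ is odd, $r \equiv 1 \pmod{p}$, and the lifting-the-exponent lemma gives that the $p$-adic valuation of $\Ese{r}{k}$ equals that of $k$; combined with the identity $\Ese{r}{i}-\Ese{r}{j} = r^j\,\Ese{r}{i-j}$ for $i\geq j$ and the fact that $r^j$ is a unit, this yields $[x,g_0^i] = [x,g_0^j]$ if and only if $|[x,g_0]|$ divides $i-j$, which is exactly the required period. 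Hence $[x,G] = \GEN{[x,g_0]} \cdot H$ is a product of two subgroups of the abelian group $G'$, so a subgroup of $G'$. The main obstacle is the lifting-the-exponent step, which genuinely needs $p>2$; the same hypothesis is also what makes $G/L$ cyclic, via the structure of $\Aut(G')$.
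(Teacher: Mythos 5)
Your proof is correct, but it is organized quite differently from the paper's. The paper fixes a representative $g$ of the class, writes the class as $Hg$ with $H=\{[x,g^{-1}]:x\in G\}$ (varying the \emph{first} slot of the commutator), and reduces everything to the single computation $[x^i,g^{-1}]=[x,g^{-1}]^{\Ese{r}{i}}$ together with the cited fact that $\{\Ese{r}{i}: i\ge 0\}$ exhausts all residues; the subgroup property then follows from the observation that a subset of a cyclic $p$-group that is closed under taking powers of its elements is automatically a subgroup (the subgroups form a chain, so $H$ equals the cyclic group generated by an element of maximal order). You instead vary the \emph{second} slot, and replace the power-closure trick by a structural decomposition $G=\Cen_G(G')\GEN{g_0}$, which requires the extra input that $\Aut(C_{p^m})$ is cyclic for $p$ odd; the centralizer part gives a genuine homomorphism $\ell\mapsto[x,\ell]$ and the cyclic part is handled by the same $\Ese{r}{j}$ arithmetic, which you prove directly via lifting the exponent rather than citing it. The paper's route is shorter and needs no information about $\Aut(G')$; yours is more self-contained (the valuation identity $v_p(\Ese{r}{k})=v_p(k)$ is proved rather than quoted) and makes visible the two distinct places where $p>2$ is used. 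Both arguments ultimately rest on the same arithmetic fact about $\Ese{r}{i}$ for $r\equiv 1\bmod p$.
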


\begin{proof}
Let $C$ be a conjugacy class of $G$, let $g\in C$ and $H=\{[x,g^{-1}] : x\in G\}$. Then $C=Hg$ and hence it is enough to prove that $H$ is a subgroup of $G'$. As $G'$ is cyclic and $H\subseteq G'$, it is enough to prove that if $h\in H$ then $h^i\in H$ for every non-negative integer $i$. Let $h=[x,g^{-1}]$ with $x\in G$. Then $h^x=h^r$ for some integer $r$ with $r\equiv 1 \mod p$. Therefore, using \cite[Lemma~2.1]{OsnelDiegoAngel}, we have
$[x^i,g^{-1}] = x^{-i} (x^i)^{g^{-1}} = x^{-i} (x^{g^{-1}})^i = x^{-i} (xh)^i=h^{\Ese{r}{i}}$.
This proves that $H$ contains all the elements of the form $h^{\Ese{r}{i}}$ with $i\ge 0$. By \cite[Lemma 2.2]{GLdRS2022} we deduce that $H$ contains $h^i$ for every non-negative integer.
\end{proof}

Let $n$ be a positive integer. We set
$$\Omega_n(G)=\GEN{g\in G : g^{p^n}=1}
 \qand
%\mho_n(G)=\GEN{g^{p^n} : g \in G}.
%$$
%We also denote
%$$
\Omega_n(G:N)=\GEN{g\in G: g^{p^{n}}\in N}.$$ 
Observe that $\Omega_n(G:N)$ is the only subgroup of $G$ containing $N$ such that
$$\Omega_n(G:N)/N=\Omega_n(G/N).$$

\subsection{The Jennings series}

We denote $\M_n(G)$ the $n$-th term of the \emph{Jennings series} of $G$, i.e.\
\[
\M_{n}(G) = \{g\in G : g-1\in \aug{G}^n\}= \prod_{ip^j\ge n} \gamma_i(G)^{p^j}.
\]
It is straightforward that
	\begin{equation}\label{UsefiN}
	 G\cap (1+\aug{ G}^n+\augNor{N}{G})= \M_n(G)N.
	\end{equation}
Each quotient $\M_n(G)/\M_{n+1}(G)$ is elementary abelian and, if $t$ is the smallest non-negative integer with $\M_{t+1}(G)=1$, then a \emph{Jennings set} of $G$ is a subset $\{g_{11},\dots,g_{1d_1},g_{21},\dots,g_{2d_2},\dots|g_{t1},\dots,g_{td_t}\}$ of $G$ such that $g_{i1}\M_{i+1}(G), \dots, g_{id_i}\M_{i+1}(G)$ is a basis of $\M_n(G)/\M_{n+1}(G)$ for each $i$. Observe that $|G|=p^{\sum_{i=1}^t d_i}$.
If $x_1,\dots,x_n$ are the elements of a Jennings set of $G$, in some order,
then
	$$\mathscr B = \{(x_1-1)^{e_1}\cdots (x_n-1)^{e_n} : 0\le e_i \le p-1 \text{ and }\sum_{i=1}^n e_i >0\}$$
is a basis of $\aug{G}$, called a \emph{Jennings basis} of $\aug{G}$ associated to the given Jennings set. We denote $\mathscr B^n = \mathscr B \cap \aug{G}^n$, which is a basis of $\aug{G}^n$.

\begin{lemma}\label{lemma:JTN}
There is a Jennings set $\mathscr S$ of $G$ such that $N\cap \mathscr S$ is a Jennings set of $N$.
\end{lemma}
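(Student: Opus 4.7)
The plan is to induct on $|G|$. If $N = 1$, any Jennings set of $G$ works since $N \cap \mathscr{S} = \emptyset$ is a vacuous Jennings set of $N$.

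Suppose $N \neq 1$ and let $t'$ be the length of the Jennings series of $N$, i.e., the largest $j$ with $\M_j(N) \neq 1$. First I would pick an element $z$ of order $p$ in $\M_{t'}(N) \cap \ZZ(G)$; this intersection is non-trivial because $\M_{t'}(N)$ is characteristic in $N$ and hence $G$-invariant, and $G$, being a $p$-group acting by conjugation on the non-trivial $p$-group $\M_{t'}(N)$, must have non-trivial fixed points. Set $Z = \GEN{z}$ and apply the induction hypothesis to $\bar{G} := G/Z$ and $\bar{N} := N/Z$, obtaining a Jennings set $\bar{\mathscr{S}}$ of $\bar{G}$ such that $\bar{N} \cap \bar{\mathscr{S}}$ is a Jennings set of $\bar{N}$.

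Next, let $l$ denote the $G$-Jennings level of $z$. Using the standard identity $\M_i(H/K) = \M_i(H)K/K$, one checks that $\M_i(\bar{G})/\M_{i+1}(\bar{G}) \cong \M_i(G)/\M_{i+1}(G)$ for $i \neq l$, whereas for $i = l$ the former is the quotient of the latter by the one-dimensional image $\bar{Z}$ of $Z$; the analogous statement at level $t'$ holds for $\bar{N}$. For each $\bar{h} \in \bar{\mathscr{S}}$ of $\bar{G}$-level $i$, I would lift it to an $h \in \M_i(G)$: for $i > l$ the lift is uniquely determined, and for $i \leq l$ all $p$ possible lifts in $\M_i(G)$ have $G$-level exactly $i$. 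When $\bar{h} \in \bar{N}$, every lift of $\bar{h}$ in $G$ automatically lies in $N$ because $Z \subseteq N$, so the lift can be chosen inside $N$. Setting $\mathscr{S}$ equal to the collection of these lifts together with $z$, the classes of the lifts at level $l$ form a basis of $\M_l(\bar{G})/\M_{l+1}(\bar{G})$ and $z$ fills in the missing $\bar{Z}$-direction, so $\mathscr{S}$ is a Jennings set of $G$; the symmetric argument at $N$-level $t'$ shows $\mathscr{S} \cap N$ is a Jennings set of $N$.

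The main obstacle is to guarantee that a lift $h \in N$ of $\bar{h}$ has not only the prescribed $G$-level $i$ but simultaneously the prescribed $N$-level $j'$, since the filtration $\{N \cap \M_i(G)\}_i$ induced by $G$ on $N$ differs in general from the intrinsic Jennings filtration $\{\M_j(N)\}_j$ (as the example of the Heisenberg group shows already in class $2$). The resolution lies in the deliberate choice of $z$ at the deepest $N$-Jennings level: every element of $\bar{N} \cap \bar{\mathscr{S}}$ has $\bar{N}$-level $j' \leq t'$, and whenever $j' < t'$ one has $Z \subseteq \M_{t'}(N) \subseteq \M_{j'+1}(N)$, which forces $\M_{j'}(\bar{N}) = \M_{j'}(N)/Z$; the case $j' = t'$ is handled by direct inspection. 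Consequently every lift in $N$ of such $\bar{h}$ lies automatically in $\M_{j'}(N) \setminus \M_{j'+1}(N)$, so the $G$-level constraint can always be accommodated without disturbing the $N$-level, and the induction closes.
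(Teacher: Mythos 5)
Your proof is correct and follows the same inductive skeleton as the paper's: factor out a central subgroup of order $p$ contained in $N$, lift a compatible Jennings set of the quotient, and adjoin a generator of that subgroup. The genuine difference lies in the two choices made along the way. The paper takes an \emph{arbitrary} subgroup $L\subseteq N\cap \ZZ(G)$ of order $p$ and an \emph{arbitrary} set of coset representatives of $\bar{\mathscr S}$, and then asserts that the resulting set is ``almost'' a Jennings set of $G$ and of $N$; as written this glosses over exactly the point you isolate: a representative of an element of $\bar{\mathscr S}$ whose $G$-level exceeds that of $L$, or whose $N$-level exceeds that of $L$ inside $N$, is not arbitrary but forced (it must be the unique lift lying in the corresponding filtration term), and a priori the two forced choices could conflict. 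Your choice of $z$ inside $\M_{t'}(N)\cap\ZZ(G)$, with $t'$ the last nontrivial level of the Jennings series of $N$, makes the $N$-level constraint vacuous --- every lift in $N$ automatically has the correct $N$-level --- so only the $G$-level constraint remains, and it is met by the unique lift in $\M_i(G)$, which lies in $N$ whenever $\bar h$ does since $Z\subseteq N$. In short, your argument is a tightened version of the paper's; what the extra care buys is a complete verification of the simultaneous level-compatibility of the lifts, which the paper's proof leaves implicit.
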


\begin{proof}
We argue by induction on $|N|$.  If $|N|=1$, then there is nothing to prove.
Now suppose that the result holds for normal subgroups of order $p^n$, and assume that $N$ has order $p^{n+1}$.
Since $G$ is a $p$-group, the center of $G$ intersects $N$ non-trivially, so we can choose a subgroup $L\subseteq N\cap \ZZ(G)$ of order~$p$.
By the induction hypothesis, we can choose a Jennings set $\bar {\mathscr S}$ of $G/L$ such that $\bar  {\mathscr S}\cap (N/L)$ is a Jennings set of $N/L$. Let $\mathscr S$ be a set of representatives of the elements of $\bar {\mathscr S}$ in $G$. Clearly, the representatives of elements in $N/L$ are in~$N$. For some $i$ we have that  $L \subseteq D_i(G)$  but $L \not \subseteq D_{i+1}(G)$, and for some $j$, that $   L\subseteq \M_j (N)$ but $L\not\subseteq \M_{j+1}(N)$. Observe that $\mathscr S$ is almost a Jennings basis of $G$ except it does not contain representatives of a basis of $\M_i(G)/\M_{i+1}(G)$, only of a maximal linear subspace which is a direct complement of $L$. Similarly, $\mathscr S\cap N$ is almost a Jennings basis of $N$ except it does not contain representatives of a basis of $\M_j(N)/\M_{j+1}(N)$, only of a maximal linear subspace which is a direct complement of $L$. Hence it suffices to take the Jennings set  $\mathscr S\cup \{l\}$, where $l$ is a generator of $L$.
\end{proof}

The following equality is \cite[Theorem~A]{Usefi2008} and its symmetric analogue:
\begin{equation}\label{Usefi}
	\M_{n+1}(N)=G\cap (1+\aug{N}^n\aug{G})=G\cap (1+\aug{G}\aug{N}^n).
\end{equation}
It can be generalized as follows.

\begin{lemma}\label{UsefiFrattini}
If $n$ and $m$ are  positive integers, then $$(1+\aug{G}^n+\aug{N}^{m }\aug{G})\cap G = \M_n(G)\M_{m+1}(N)=(1+\aug{G}^n+\aug{G}\aug{N}^{m })\cap G .$$
\end{lemma}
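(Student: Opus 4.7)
I plan to prove the first equality $(1+\aug{G}^n+\aug{N}^m\aug{G})\cap G = \M_n(G)\M_{m+1}(N)$; the second identity, involving $\aug{G}\aug{N}^m$, follows by a symmetric argument using the right-sided form of \eqref{Usefi}. The easy containment $\M_n(G)\M_{m+1}(N)\subseteq (1+\aug{G}^n+\aug{N}^m\aug{G})\cap G$ is straightforward: normality of $N$ in $G$ forces $\aug{N}^m$ to be stable under $G$-conjugation, so $\aug{N}^m\aug{G}$ and therefore $I:=\aug{G}^n+\aug{N}^m\aug{G}$ is a two-sided ideal of $kG$; consequently $1+I$ is a subgroup of the unit group of $kG$, containing $\M_n(G)$ by the definition of the Jennings series and $\M_{m+1}(N)$ since $\M_{m+1}(N)-1\subseteq \aug{N}^{m+1}\subseteq \aug{N}^m\aug{G}$. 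Hence $1+I$ contains the product $\M_n(G)\M_{m+1}(N)$.

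For the reverse containment I set $L:=\M_{m+1}(N)$, which is normal in $G$. By \eqref{Usefi} we have $\augNor{L}{G}\subseteq \aug{N}^m\aug{G}$, and \eqref{UsefiN} applied to the normal subgroup $L$ yields
\[(1+\aug{G}^n+\augNor{L}{G})\cap G = \M_n(G)L = \M_n(G)\M_{m+1}(N).\]
It thus suffices to show that for $g\in G$, the inclusion $g-1\in \aug{G}^n+\aug{N}^m\aug{G}$ already implies $g-1\in \aug{G}^n+\augNor{L}{G}$. Passing to the quotient $\bar G:=G/L$ (in which $\M_{m+1}(\bar N)=1$ for $\bar N:=N/L$), this reduces to the following key claim: if $\M_{m+1}(N)=1$ and $g-1\in \aug{G}^n+\aug{N}^m\aug{G}$ for some $g\in G$, then $g\in \M_n(G)$.

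This reduced claim is the core of the argument and the principal obstacle. Since $\aug{N}^m\aug{G}\subseteq \aug{G}^{m+1}$, it is trivial when $n\le m+1$; the substantive case is $n>m+1$. My plan is to invoke Lemma~\ref{lemma:JTN} to choose a Jennings set $\mathscr S$ of $G$ such that $\mathscr S\cap N$ is a Jennings set of $N$; because $\M_{m+1}(N)=1$, all $N$-weights of elements of $\mathscr S\cap N$ are at most $m$. Writing $g-1=\alpha+\beta$ with $\alpha\in \aug{G}^n$ and $\beta\in \aug{N}^m\aug{G}$, and assuming for contradiction that $g\notin \M_n(G)$, the $G$-weight $w$ of $g$ must satisfy $m+1\le w<n$, and the nonzero image of $g-1$ in $\aug{G}^w/\aug{G}^{w+1}$ must coincide with that of $\beta$. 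A careful analysis of the Jennings-basis representation of $\aug{N}^m\aug{G}$, combined with the constraint from \eqref{Usefi} that $\aug{N}^m\aug{G}\cap(G-1)=\{0\}$ in the reduced setting, would rule out such a leading term. The main technical difficulty lies in controlling the possible discrepancy between the $N$-weights and $G$-weights of the elements of $\mathscr S\cap N$ when extracting leading terms within the $\aug{G}^\bullet$-filtration.
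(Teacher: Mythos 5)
Your easy inclusion and your reduction to the case $\M_{m+1}(N)=1$ (via $L=\M_{m+1}(N)$, \eqref{Usefi} and \eqref{UsefiN}) are correct and run parallel to the paper's own reduction modulo $\M_n(G)\M_{m+1}(N)$; like the paper, you also identify \Cref{lemma:JTN} and the compatible Jennings bases as the right tools. However, the proof stops exactly where the real work begins: the ``reduced claim'' is accompanied only by a plan (``a careful analysis \dots would rule out such a leading term''), and you yourself flag, without resolving it, the obstacle on which that plan depends --- the mismatch between the $N$-weights and the $G$-weights of the elements of $\mathscr S\cap N$. That obstacle is genuine: a monomial in $\mathscr B_0^m$ built from elements of $\mathscr S\cap N$ of small $N$-weight can have any $G$-weight $w\ge m+1$, so the component of $\beta$ in $\aug{G}^{w}/\aug{G}^{w+1}$ is a priori an uncontrolled combination of such monomials, and comparing it with the image of $g$ under $\M_w(G)/\M_{w+1}(G)\hookrightarrow\aug{G}^{w}/\aug{G}^{w+1}$ requires graded information you have not established. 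As written, this is an outline whose central step is missing.

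The paper closes this gap by avoiding the $\aug{G}$-adic leading-term comparison altogether: it inducts on $m$. The induction hypothesis places the hypothetical nontrivial $g$ inside $\M_m(N)$, which is elementary abelian because $\M_{m+1}(N)=1$; hence $g-1\in\aug{\M_m(N)}\setminus\aug{\M_m(N)}^2$, so its expansion in the Jennings basis $\mathscr B$ contains a degree-one term $h-1$ with $1\ne h\in\M_m(N)$. Since the support of any element of $\aug{G}^n+\aug{N}^m\aug{G}$ lies in $\mathscr B^n\cup\mathscr B_0^m$, and a degree-one basis element $h-1$ in that set forces $h\in\M_n(G)\M_{m+1}(N)=1$ by \eqref{Usefi}, one gets a contradiction. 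If you insist on your leading-term route, you would need to invoke (and prove or cite) that the image of a weight-$w$ group element in $\aug{G}^{w}/\aug{G}^{w+1}$ always has a degree-one monomial $(x-1)$, with $x\in\mathscr S$ of weight $w$, in its support, and then rule out $(x-1)\in\mathscr B^n\cup\mathscr B_0^m$; the induction on $m$ is precisely the paper's device for sidestepping that bookkeeping.
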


\begin{proof}
We  prove only the first identity, the second being analogous.
Since $(1+\aug{G}^n)\cap G=\M_{n}(G)$ and
$(1+\aug{N}^m\aug{G})\cap G \supseteq (1+\aug{N}^{m+1}) \cap G= \M_{m+1}(N)$, the right-to-left inclusion is clear.
Thus it suffices to prove the converse.
Taking quotients modulo $\M_n(G)\M_{m+1}(N)$, it is enough to prove that
	\begin{equation}\label{UsefiFrattini:AngelLaVaAQuitar}
\M_n(G)\M_{m+1}(N)=1\qquad \text{implies}\qquad (1+\aug{G}^n+\aug{N}^{m }\aug{G})\cap G=1.
	\end{equation}

By \Cref{lemma:JTN}, there is a Jennings set $\mathscr S$ of $G$ such that $N\cap \mathscr S$ is a Jennings set of $N$.
Ordering the elements of $\mathscr S$ so that those in $N$ are placed first we obtain a Jennings basis $\mathscr B$ of $\aug{G}$ associated to $\mathscr S$ containing a Jennings basis $\mathscr B_0$ of $\aug{N}$ associated to $N\cap  \mathscr S$.
Recall that the set $\mathscr B^n= \mathscr B\cap \aug{G}^n$ is a basis of $\aug{G}^n$. Moreover, the set $\mathscr B_0^m=\mathscr B\cap \aug{N}^m\aug{G}$ is a basis of
$\aug{N}^m\aug{G}$, and coincides with the set of elements of $\mathscr B$
of the form $xy$ with $x\in \mathscr B_0\cap \aug{N}^m$ and $y\in \aug{G}$.
Then the following implication is clear: if $y\in \mathscr B$ occurs in the support in the basis $\mathscr B$ of an element $x\in \aug{G}^n+\aug{N}^m\aug{G}$, then $y\in \mathscr B^n\cup \mathscr B_0^m$.

	Moreover, it is clear $(1+\mathscr B^n)\cap G\subseteq (1+\aug{G}^n)\cap G=\M_n(G)$ and $(1+\mathscr B_0^m)\cap G\subseteq (1+\aug{N}^m\aug{G})\cap G= \M_{m+1}(N)$ by \eqref{Usefi}.  Thus $(1+\mathscr B^n\cup \mathscr B_0^m)\cap G \subseteq \M_n(G) \M_{m+1}(N)$.

We prove \eqref{UsefiFrattini:AngelLaVaAQuitar} by induction on $m$.  Suppose first that  $m=1$ and that $\M_n(G)\M_2(N)=1$, so   \eqref{UsefiN} yields
	$$(1+\aug{G}^n+\aug{N}^{   }\aug{G})\cap G \subseteq
	(1+\aug{G}^n+\augNor{N}{G}  )=\M_n(G)N=N.$$
So, if $1\neq g\in (1+\aug{G}^n+\aug{N}^{     }\aug{G})\cap G$, then $g\in N$. Since $N$ is elementary abelian, $g-1\in \aug{N}\setminus \aug{N}^2$. Thus the support of $g-1$ in the basis $\mathscr B_0$ contains an element of the form $h-1$, with $1\neq h\in N $. Then, by the two previous paragraphs, $h\in (1+ \mathscr B^n\cup \mathscr B _0^1)\cap G\subseteq \M_n(G)\M_{2}(N)=1$, a contradiction.

	For $m> 1$, the induction step is similar. Suppose that $\M_n(G)\M_{m+1}(N)=1$, so $\M_m(N)$ is elementary abelian. Take
	$$1\neq g\in  (1+\aug{G}^n+\aug{N}^{ m  }\aug{G})\cap G \subseteq (1+\aug{G}^n+\aug{N}^{m-1}\aug{G}  )=\M_n(G)\M_{m }(N)=\M_m(N). $$
	Since $\mathscr B_0\cap \aug{\M_{m  }(N)}$ is a Jennings basis of $\aug{\M_m(N)}$ and $g-1\in \aug{\M_m(N)}\setminus \aug{\M_m(N)}^2$, we have that the support of $g-1$ in this basis (and hence in the basis $\mathscr B $) contains an element of the form $h-1$, with $1\neq h\in \M_m(N)$.  However, $h\in (1+\mathscr B^n\cup \mathscr B_0^m)\subseteq \M_n(G)\M_{m+1}(N)=1$, a contradiction.
\end{proof}

\subsection{The relative lower central series}

The \emph{lower central series of $N$ relative to $G$} is the series defined recursively by
$$\gamma_1^G(N)=G \qand
 	\gamma_{n+1}^G(N)=[ \gamma_n^G(N),N ].$$
%  	\begin{align*}
%  	\gamma_1^G(N)&=G \text{ and}  \\
%  	\gamma_{n+1}^G(N)&=[ \gamma_n^G(N),N ].
%%  	\qquad \text{for }n\geq 1.
%  	\end{align*}
 We consider also the sequence of  ideals of $kG$ defined recursively by setting 
$$ \Aug{1}{N,G} =   \aug{N}\aug{G} \qand
 \Aug{+1}{N,G}=\aug{N}\Aug{i}{N,G} +\Aug{i}{N,G}  \aug{N}.$$
%\begin{align*}
%	 \Aug{1}{N,G}& =   \aug{N}\aug{G} \text{ and}  \\
% \Aug{n+1}{N,G}&=\aug{N}\Aug{n}{N,G} +\Aug{n}{N,G}  \aug{N}.
% \end{align*}
This can be also defined with a closed formulae: 
 \begin{equation}\label{JClosed}
 \Aug{n}{N,G}=\aug{N}^n\aug{G}+\sum_{i=1}^{n-1} \aug{N}^{n-i}\aug{G}\aug{N}^{i}.
 \end{equation}

From $\aug{N}kG=kG\aug{N}$ and \eqref{JClosed} it easily follows that 
\begin{equation}\label{IJISandwich}
\aug{N}^n\aug{G}\subseteq \Aug{n}{N,G}\subseteq \aug{N}^n kG.
\end{equation}

\begin{lemma}\label{WellDefined}
The following is a well defined   map:
  $$\Lambda_N^n=\Lambda^n_{N,G}:\frac{\augNor{N}{G} }{\aug{N}\aug{G}} \longrightarrow
  \frac{\aug{N}^{p^{n}}kG}{\Aug{p^{n}}{N,G}}, \qquad
x+\aug{N}\aug{G}\mapsto x^{p^n}+\Aug{p^{n}}{N,G}.$$
\end{lemma}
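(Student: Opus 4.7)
The plan is twofold: to check (i) that for every $x \in \aug{N}kG$ the element $x^{p^n}$ actually lies in $\aug{N}^{p^n}kG$, so that the image makes sense in the codomain, and (ii) that replacing $x$ by $x+z$ with $z \in \aug{N}\aug{G}$ alters $x^{p^n}$ only by an element of $\Aug{p^n}{N,G}$, so that the formula descends to the quotient.

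For (i), since $N \trianglelefteq G$ the set $\aug{N}kG = kG\aug{N}$ is a two-sided ideal, and a one-line induction on $m$ shows $(\aug{N}kG)^m = \aug{N}^m kG$; taking $m = p^n$ gives the claim. For (ii), I would expand
\[(x+z)^{p^n} - x^{p^n} = \sum_{w} w_1 w_2 \cdots w_{p^n},\]
where the sum runs over length-$p^n$ words $w$ in the letters $x,z$ containing at least one $z$, and then show that each such summand lies in $\Aug{p^n}{N,G}$. Fix such a word and let $i_0$ be the smallest index with $w_{i_0} = z$; by (i) we have $w_1 \cdots w_{i_0-1} = x^{i_0-1} \in \aug{N}^{i_0-1}kG$, while $w_{i_0} \in \aug{N}\aug{G}$ and $w_{i_0+1} \cdots w_{p^n} \in \aug{N}^{p^n-i_0}kG$. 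Multiplying the three factors and absorbing the stray $kG$ into its neighbours via $kG\aug{N} = \aug{N}kG$ (from normality) and $\aug{G}kG = \aug{G}$, the product ends up in $\aug{N}^{i_0}\aug{G}\aug{N}^{p^n-i_0}$; when $i_0 = p^n$ the trailing empty product collapses and the term lies in $\aug{N}^{p^n}\aug{G}$. In either case, by the closed form \eqref{JClosed} this is a summand of $\Aug{p^n}{N,G}$, as required.

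The only genuine nuisance I anticipate is the non-commutative bookkeeping needed to push each stray $kG$ factor into either an $\aug{G}$ or a power of $\aug{N}$ so that the word lands in the precise pattern $\aug{N}^a\aug{G}\aug{N}^b$ with $a+b = p^n$ prescribed by \eqref{JClosed}. Aside from this, no real obstacle arises: everything follows from the normality of $N$ in $G$ and the resulting two-sided ideal structure of $\aug{N}kG$.
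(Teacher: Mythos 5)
Your proposal is correct and follows essentially the same route as the paper: expand $(x+z)^{p^n}-x^{p^n}$ into words in $x$ and $z$ with at least one letter equal to $z$, and show each word lands in a term $\aug{N}^{a}\aug{G}\aug{N}^{b}$ with $a+b=p^n$ of the closed formula \eqref{JClosed}, using normality of $N$ to move the stray $kG$ factors. Your step (i), identifying $(\aug{N}kG)^{m}=\aug{N}^{m}kG$ so that the codomain makes sense, is left implicit in the paper (it is essentially \eqref{IJISandwich}), but the argument is the same.
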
 	

\begin{proof}
Let $x\in \augNor{N}{G} $ and $y\in \aug{N}\aug{G}$. Then $(x+y)^{p^n}-x^{p^n}  = \sum_i a_i$ where each $a_i$ is a product of $p$ elements of $\{x,y\}$ with at least one equal to $y$. Hence each $a_i\in I_1\dots I_{p^n}$, where each $I_i$ is either $\augNor{N}{G} $ or $\aug{N} \aug{G}$,  and at least one of the $I_i$'s is of the second type. Since $\aug{N} \aug{G}\subseteq \augNor{N}{G}$, $I_1\dots I_{p^n}\subseteq I(N)^{p^n-j} I(G) I(N)^j$ for some $0\leq j\leq p^n$, and hence, by  \eqref{JClosed}, $I_1\dots I_{p^n}\subseteq \Aug{p^{n}}{N,G}$. Therefore $(x+y)^{p^n}-x^{p^n} \in \Aug{p^{n}}{N,G}$, so $\Lambda^n_N$ is well defined.
 \end{proof}

The ambient group $G$ will be always clear from the context so we just write $\Lambda^n_N$.
 	In particular, 
 	$$\Lambda_G^n: \frac{\aug{G}}{\aug{G}^2}\to \frac{\aug{G}^{p^n}}{\aug{G}^{p^{n}+1}}$$
 	is the usual map used in the \emph{kernel size} computations (see \cite{Passman1965p4}).

 The first statement of the next lemma is just a slight modification of a well-known identity (see \cite[Lemma 2.2]{San89}), while the second one is inspired, together with the definition of the ideals $\Aug{i}{N,G}$, by the first section of \cite{BC88}.   For the convenience of the reader  we include a proof. 

 \begin{lemma}\label{LemmaJ}
 	Let $L$ and $N$ be normal subgroups of $G$. Then the following equations hold
\begin{eqnarray}
\label{eq71}
  \aug{L}  \augNor{N}{G}+\aug{N} \augNor{L}{G}  &=& \augNor{[L,N]}{G} +\aug{N}\augNor{L}{G}, \\
\label{eq72}
 	\Aug{n}{N,G}&=& \sum_{i=1}^{n }  \aug{N}^{n+1-i}    \augNor{\gamma_i^G(N)}{G}  .
\end{eqnarray}
 \end{lemma}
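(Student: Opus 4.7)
The plan is to prove (\ref{eq71}) first from an elementary commutator identity in $kG$, and then derive (\ref{eq72}) by induction on $n$, feeding (\ref{eq71}) into the recursion $\Aug{n+1}{N,G}=\aug{N}\Aug{n}{N,G}+\Aug{n}{N,G}\aug{N}$.

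For (\ref{eq71}), the starting point is the identity $(l-1)(n-1)-(n-1)(l-1)=ln-nl=nl\bigl([l,n]-1\bigr)$, valid for all $l\in L$ and $n\in N$. Since $L$, $N$ and $[L,N]$ are normal in $G$, the ideals $\augNor{L}{G}$, $\augNor{N}{G}$ and $\augNor{[L,N]}{G}$ are two-sided. The identity immediately gives $\aug{L}\aug{N}\subseteq \aug{N}\aug{L}+\augNor{[L,N]}{G}$, and multiplying by $kG$ on the right yields $\aug{L}\augNor{N}{G}\subseteq \aug{N}\augNor{L}{G}+\augNor{[L,N]}{G}$. For the reverse inclusion $\augNor{[L,N]}{G}\subseteq \aug{L}\augNor{N}{G}+\aug{N}\augNor{L}{G}$, I would write a generic $h\in[L,N]$ as a product $h=c_1\cdots c_k$ of commutators and their inverses, telescope $h-1=\sum_j c_1\cdots c_{j-1}(c_j-1)$, and rewrite each $c_j-1$ via the same identity (together with $[l,n]^{-1}-1=-[l,n]^{-1}([l,n]-1)$) as an element of $\aug{L}\augNor{N}{G}+\aug{N}\augNor{L}{G}$; the prefix $c_1\cdots c_{j-1}\in G$ is absorbed by two-sidedness. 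Adding $\aug{N}\augNor{L}{G}$ to both inclusions produces (\ref{eq71}).

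For (\ref{eq72}), the base case $n=1$ is immediate since $\gamma_1^G(N)=G$. For the inductive step, let $R_n$ denote the right-hand side of (\ref{eq72}). Applying (\ref{eq71}) with $L=\gamma_i^G(N)$ and using the identity $\augNor{\gamma_i^G(N)}{G}\aug{N}=\aug{\gamma_i^G(N)}\augNor{N}{G}$ (which holds because $N$ is normal in $G$), the $i$-th summand of $R_n\aug{N}$ equals, modulo the $i$-th summand of $\aug{N}R_n$, the term $\aug{N}^{n+1-i}\augNor{\gamma_{i+1}^G(N)}{G}$. Summing over $i=1,\dots,n$ produces the indices $j=i+1=2,\dots,n+1$ of $R_{n+1}$, while $\aug{N}R_n$ already accounts for the indices $j=1,\dots,n$ of $R_{n+1}$. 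Hence $\Aug{n+1}{N,G}=\aug{N}\Aug{n}{N,G}+\Aug{n}{N,G}\aug{N}=\aug{N}R_n+R_n\aug{N}=R_{n+1}$, closing the induction. The main obstacle I anticipate is the bookkeeping of the index shift $i\mapsto i+1$ coming from $\gamma_{i+1}^G(N)=[\gamma_i^G(N),N]$, together with the need to use both directions of (\ref{eq71}) so that the final statement in (\ref{eq72}) is an equality and not merely an inclusion.
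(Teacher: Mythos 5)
Your proposal is correct and follows essentially the same route as the paper: equation \eqref{eq71} from the commutator identity $(l-1)(n-1)-(n-1)(l-1)=nl([l,n]-1)$ together with two-sidedness of the ideals involved, and \eqref{eq72} by induction on $n$ using \eqref{eq71} with $L=\gamma_i^G(N)$ and the reindexing $i\mapsto i+1$. The only difference is cosmetic: you spell out the reverse inclusion $\augNor{[L,N]}{G}\subseteq \aug{L}\augNor{N}{G}+\aug{N}\augNor{L}{G}$ via the telescoping of products of commutators, a step the paper leaves implicit.
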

 
 \begin{proof}
 Since the  terms at both sides of \eqref{eq71} are  two-sided  ideals  of $kG$, the equation follows from
 	$$(g-1)(h-1)=hg([g,h]-1)+(h-1)(g-1)\qquad \text{for }g,h\in G.$$

In order to prove \eqref{eq72} we proceed by induction on $n$. For $n=1$ there is nothing to prove, and the following chain of equations
 	\begin{eqnarray*}
 		\Aug{n+1}{N,G} &=&\Aug{n}{N,G}\aug{N} + \aug{N} \Aug{n}{N,G}\\
 		& =&  \sum_{i=1}^n \aug{N}^{n+1-i}  \augNor{\gamma_i^G(N)}{G}  \aug{N} +
 		\aug{N}\sum_{i=1}^n \aug{N}^{n+1-i} \augNor{\gamma_i^G(N)}{G}  \\
 		&= &  \sum_{i=1}^n \aug{N}^{n+1-i} \left[ \aug{\gamma_i^G(N)} \augNor{N}{G} +
 		\aug{N} \augNor{\gamma_i^G(N)}{G}\right] \\
 		\text{(by   \eqref{eq71}  with  $L=\gamma_i^G(N)$)}	&=& 
 		\sum_{i=1}^n \aug{N}^{n+1-i} \left(\augNor{\gamma_{i+1}^G(N)}{G}    +   \aug{N}  \augNor{\gamma_i^G(N)}{G}   \right) \\
 		&=& \sum_{i=1}^{n+1} \aug{N}^{n+2-i} \augNor{\gamma_i^G(N)}{G}
 	\end{eqnarray*} 
 completes the induction argument.
 \end{proof}

\begin{lemma}\label{JotasLema}
Let $N$ be a normal subgroup of $G$.
\begin{enumerate}
\item If $\gamma_i^G(N)\subseteq \M_i(N)$ for every $i\ge 2$ then for every $n\ge 1$ we have $\Aug{n}{N,G}= \aug{N}^n\aug{G}$. 
\item If $[G,N]\subseteq N^p$ then $\gamma_i^G(N)\subseteq \M_i(N)$ for every $i\ge 2$.
\end{enumerate}
\end{lemma}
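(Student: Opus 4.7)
For part (1), the plan is to use the closed formula \eqref{eq72} from \Cref{LemmaJ}:
$$\Aug{n}{N,G} = \sum_{i=1}^n \aug{N}^{n+1-i}\, \augNor{\gamma_i^G(N)}{G}.$$
The inclusion $\aug{N}^n\aug{G}\subseteq \Aug{n}{N,G}$ is already part of \eqref{IJISandwich}, so only the reverse containment needs work. Since $\gamma_1^G(N)=G$ and $\augNor{G}{G}=\aug{G}$, the $i=1$ summand is precisely $\aug{N}^n\aug{G}$. For $i\ge 2$, the hypothesis $\gamma_i^G(N)\subseteq \M_i(N)$ means, by the very definition of the Jennings series, that $g-1\in \aug{N}^i$ for every $g\in \gamma_i^G(N)$; hence $\aug{\gamma_i^G(N)}\subseteq \aug{N}^i$ and the $i$-th summand is contained in $\aug{N}^{n+1-i}\cdot \aug{N}^i kG=\aug{N}^{n+1}kG$. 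To close the argument I would exploit that $kG=k\oplus \aug{G}$ together with $\aug{N}\subseteq \aug{G}$, obtaining $\aug{N}^{n+1}kG=\aug{N}^{n+1}+\aug{N}^{n+1}\aug{G}\subseteq \aug{N}^n\aug{G}$.

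For part (2), the plan is a short induction on $i\ge 2$. The base case is immediate from the hypothesis: $\gamma_2^G(N)=[G,N]\subseteq N^p$, and $N^p\subseteq \M_2(N)$ since $N^p=\gamma_1(N)^p$ is a factor in the product expression $\M_2(N)=\prod_{rp^s\ge 2}\gamma_r(N)^{p^s}$ (take $r=s=1$). For the inductive step, assuming $\gamma_i^G(N)\subseteq \M_i(N)$ with $i\ge 2$, I would combine the identity $\M_1(N)=N$ with the standard multiplicative property of the Jennings series $[\M_i(N),\M_j(N)]\subseteq \M_{i+j}(N)$ to deduce
$$\gamma_{i+1}^G(N)=[\gamma_i^G(N),N]\subseteq [\M_i(N),\M_1(N)]\subseteq \M_{i+1}(N).$$

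Neither part should present a real obstacle. The only mild subtlety is the final step in part (1): the inclusion $\aug{N}^{n+1}kG\subseteq \aug{N}^n\aug{G}$ is not completely automatic and relies on absorbing one extra factor of $\aug{N}$ into $\aug{G}$ via $\aug{N}\subseteq \aug{G}$; once this is noticed, the rest of both arguments reduces to a direct appeal to \Cref{LemmaJ} and standard Jennings-series calculus.
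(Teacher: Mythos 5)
Your argument is correct and follows essentially the same route as the paper: part (1) is the paper's own computation via \eqref{eq72}, bounding the $i\ge 2$ summands inside $\aug{N}^{n+1}kG\subseteq \aug{N}^n\aug{G}$ (you just spell out this last absorption step, which the paper leaves implicit), and part (2) is the paper's induction using that $(\M_i(N))_i$ is an $N_p$-series. No gaps.
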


\begin{proof}
(1) Suppose that $\gamma_i^G(N)\subseteq \M_i(N)$ for $i\ge 2$. 
Since $\M_i(N) \subseteq   1+\aug{N}^i$, it follows that if $i\ge 2$ then 
$\aug{\gamma_i^G(N)}\subseteq \aug{N}^i$ and hence, using \eqref{eq72}  we have 
$$\Aug{s}{N,G}=\aug{N}^s\aug{G} + \sum_{i=2}^s \aug{N}^{s+i-1}\augNor{\gamma_i^G(N)}{G}\subseteq \aug{N}^s\aug{G} + \aug{N}^{s+1}kG \subseteq \aug{N}^s\aug{G}.$$
This, together with \eqref{IJISandwich},  completes the proof. 
%The second one then follows from  \eqref{Usefi}.

(2) Suppose that $[G,N]\subseteq N^p$. Then $\gamma_2^G(N)=[G,N]\subseteq N^p\subseteq \M_2(N)$. Then arguing by induction on $i$, for every $i\ge 3$ we  obtain   $\gamma_i^G(N)=[\gamma_{i-1}^G(N),N]\subseteq
[\M_{i-1}(N),\M_1(N)] \subseteq \M_i(N)$, because $(\M_i(N))_i$ is an $N_p$-series. 
\end{proof}

\subsection{Canonical subquotients and maps}

Let $\G$ be a class of groups.
Roughly speaking, we say that a certain assignation defined on $\G$ is canonical if it ``depends only on the isomorphism type of $kG$ as $k$-algebra''.
More precisely, suppose that for each $G$ in $\G$ we have associated a subquotient $U_G$ of $k G$ as $k$-space. 
We say that $G\mapsto U_G$ is canonical in $\G$ if every isomorphism $k$-algebras $\psi:kG\to kH$, with $G$ and $H$ in $\G$, induces an isomorphism
$\tilde{\psi}:U_G\mapsto U_H$ in the natural way.
If $(G\mapsto U_G^{(x)})_{x\in X}$ is a family of canonical subquotients in $\G$ then we also say that $G\mapsto \prod_{x\in X} U_G^{(x)}$ is canonical in $\G$. In this case every isomorphism $\psi:kG \to kH$ with $G$ and $H$ in $\G$ induces an isomorphism $\prod_{x\in X} U_G^{(x)}\to \prod_{x\in X} U_H^{(x)}$ in the natural way.

\begin{lemma}\label{lemma:canonicalsubgroups}
The following assignations are canonical in the class of $p$-groups:
\begin{itemize}
\item $G\mapsto \augNor{\Omega_n(G:G')}{G}$.
\item $G\mapsto \augNor{\Omega_n(G:\ZZ(G)G')}{G}$.
\end{itemize}
\end{lemma}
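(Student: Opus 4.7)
The plan is to reduce both items to a single intrinsic description valid for abelian $p$-groups, and then transfer it to $kG$ through the canonical commutative quotients $kG^{\mathrm{ab}} = kG/\augNor{G'}{G}$ and $k(G/\ZZ(G)G') = kG/\augNor{\ZZ(G)G'}{G}$.

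The main ingredient is the following identity, which I would prove for every abelian $p$-group $A$:
\[
\augNor{\Omega_n(A)}{A} = \{x \in kA : x^{p^n} = 0\}.
\]
The right-hand side is manifestly preserved by any $k$-algebra isomorphism, so this description yields the canonicity of $A \mapsto \augNor{\Omega_n(A)}{A}$ within the class of abelian $p$-groups. The inclusion $\subseteq$ follows from Freshman's dream in characteristic $p$: for $a \in \Omega_n(A)$ one has $(a-1)^{p^n} = a^{p^n} - 1 = 0$, and since $kA$ is commutative the $p^n$-th power of any element of the ideal they generate is zero. For $\supseteq$, I would fix a cyclic decomposition $A = \prod_i \GEN{g_i}$ with $|g_i| = p^{a_i}$ and use the monomial basis $\{\prod_i (g_i-1)^{e_i} : 0 \le e_i < p^{a_i}\}$ of $kA$. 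Writing $x = \sum_e c_e \prod_i (g_i-1)^{e_i}$, Freshman's dream gives $x^{p^n} = \sum_e c_e^{p^n} \prod_i (g_i-1)^{e_i p^n}$, and the linear independence of the surviving monomials forces every nonzero $c_e$ to satisfy $e_i \ge p^{\max(0, a_i - n)}$ for some $i$; this places $x$ in the ideal generated by $(g_i - 1)^{p^{\max(0, a_i - n)}} = g_i^{p^{\max(0, a_i - n)}} - 1$, which is exactly $\augNor{\Omega_n(A)}{A}$.

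For the first bullet, note that $\augNor{G'}{G}$ is canonical in the class of $p$-groups, being the two-sided ideal of $kG$ generated by the algebra-commutators $ab - ba$ (equivalently, the kernel of $kG \twoheadrightarrow kG^{\mathrm{ab}}$). Hence any isomorphism $\psi\colon kG \to kH$ descends to $\bar\psi\colon kG^{\mathrm{ab}} \to kH^{\mathrm{ab}}$, and the correspondence theorem identifies $\augNor{\Omega_n(G:G')}{G}/\augNor{G'}{G}$ with $\augNor{\Omega_n(G^{\mathrm{ab}})}{G^{\mathrm{ab}}}$ inside $kG^{\mathrm{ab}}$. By the abelian case this latter ideal is canonical, so lifting back to $kG$ gives the claim.

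For the second bullet, the extra step is to show that $\augNor{\ZZ(G)G'}{G}$ is canonical. Since $\ZZ(kG)$ and $\aug{G}$ are canonical (the center of $kG$ and its Jacobson radical, respectively), the subspace $\ZZ(kG) \cap \aug{G}$ is canonical. I would compute its image in $kG^{\mathrm{ab}}$ using the basis of class sums $\{\hat C\}$: for a non-central class $C$, $|C|$ is a positive power of $p$, so $\hat C$ maps to $|C|\bar g = 0$ in characteristic $p$; for a central class $C = \{z\}$, $\hat C = z$ maps to $\bar z \in \bar Z := \ZZ(G)G'/G'$. Thus this image equals $\aug{\bar Z}$, and so $\augNor{\bar Z}{G^{\mathrm{ab}}} = \aug{\bar Z}\cdot kG^{\mathrm{ab}}$ is canonical; its preimage in $kG$ is precisely $\augNor{\ZZ(G)G'}{G}$. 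Once this is in hand, applying the abelian lemma to the canonical commutative quotient $kG/\augNor{\ZZ(G)G'}{G} \cong k(G/\ZZ(G)G')$ yields the canonicity of $\augNor{\Omega_n(G:\ZZ(G)G')}{G}$. The main obstacle is establishing the intrinsic description of $\augNor{\Omega_n(A)}{A}$ in the abelian case; everything else is a transport through canonical quotients.
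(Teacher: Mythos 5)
Your argument is correct. Note, however, that the paper does not prove this lemma at all: it simply cites \cite[Proposition 2.3(1) and Lemma 3.6]{GLdRS2022}, so there is no in-paper proof to measure you against. Your route --- reducing to the abelian case via the canonical quotients $kG/\augNor{G'}{G}$ and $kG/\augNor{\ZZ(G)G'}{G}$, and characterizing $\augNor{\Omega_n(A)}{A}$ for abelian $A$ as the kernel of the $p^n$-th power (Frobenius) map --- is the standard one and is essentially what the cited results amount to. All the steps check out: the set $\{x\in kA: x^{p^n}=0\}$ is an ideal because the iterated Frobenius is a ring endomorphism of the commutative algebra $kA$; the monomial-basis computation correctly identifies it with $\augNor{\Omega_n(A)}{A}$; the kernel of $kG\to kG^{\mathrm{ab}}$ is the commutator ideal, hence canonical; and your computation of the image of $\ZZ(kG)\cap\aug{G}$ in $kG^{\mathrm{ab}}$ (non-central class sums die because $|C|\equiv 0$ in $k$, central ones survive) correctly exhibits $\augNor{\ZZ(G)G'}{G}$ as the preimage of a canonical ideal, which is in the same spirit as the decomposition \eqref{eq:center} the paper quotes from Sandling. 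The only cosmetic caveat is that the preimage/correspondence steps implicitly use that for normal subgroups $N\le M$ the kernel of $k(G/N)\to k(G/M)$ is $\augNor{M/N}{G/N}$; this is routine and worth one line if you write it up.
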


\begin{proof}
See \cite[Proposition 2.3(1) and Lemma 3.6]{GLdRS2022}.
\end{proof}

\begin{lemma}\cite[Theorem 4.2(1)]{GLdRS2022}\label{lemma:canonicalsubgroups2}
The assignation $G\mapsto \augNor{\Cen_G(G')}{G}$ is canonical in the class of $p$-groups with cyclic derived subgroup and $p$ odd.
\end{lemma}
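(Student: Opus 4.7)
The plan is to exhibit $\augNor{\Cen_G(G')}{G}$ as a canonically determined subobject of $kG$. First, the ideal $\augNor{G'}{G}$ is canonical, since it is the smallest ideal $I$ of $kG$ such that $kG/I$ is commutative (equivalently, the ideal generated by the ring commutators $xy-yx$). Moreover, writing $|G'|=p^s$ and picking any generator $c$ of $G'$, the identity $(c-1)^{p^i}=c^{p^i}-1$ valid in characteristic~$p$ together with the normality of $G'$ give $\augNor{(G')^{p^i}}{G}=\augNor{G'}{G}^{p^i}$ for every $i\ge 0$. Hence the whole filtration of $\augNor{G'}{G}$ by the ideals $\augNor{(G')^{p^i}}{G}$ is canonical, and each quotient $M_i=\augNor{(G')^{p^{i-1}}}{G}/\augNor{(G')^{p^i}}{G}$ is a canonical $kG$-bimodule.

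Next, for each $g\in G$ conjugation acts on $G'$ as $c\mapsto c^{k(g)}$ for some unit $k(g)\in(\Z/p^s\Z)^\times$, and $g\in \Cen_G(G')$ iff $k(g)\equiv 1\pmod{p^s}$. The hypothesis $p>2$ ensures that $\Aut(G')$ is cyclic, so $G/\Cen_G(G')$ is cyclic and the $p$-adic valuation of $k(g)-1$ is a well-defined invariant measuring the depth of $g$ in the chain $\Cen_G(G')\le\Cen_G(G'/(G')^p)\le\cdots$. To extract this invariant canonically, I would use \Cref{ClCoset}: every conjugacy class of $G$ is a coset of a subgroup of $G'$, so the class sums form a canonical basis of $\ZZ(kG)$, and the class sum of $g$ decomposes as $\hat C_g=g\hat H_g$ for a cyclic subgroup $H_g\le G'$. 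The subgroup $H_g$ encodes precisely the depth above, and $g\in \Cen_G(G')$ exactly when $H_g=1$. Reading off, from each canonical class sum, the first level of the canonical filtration of $\augNor{G'}{G}$ at which $\hat C_g-|\Cen_G(g)|\,g$ appears, one recovers $H_g$ canonically; the $kG$-span of the elements $g-1$ with $H_g=1$ is then the desired ideal $\augNor{\Cen_G(G')}{G}$.

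The main obstacle is that the conjugation action on any single graded piece $M_i$ has kernel $\Cen_G(G'/(G')^p)$, regardless of $i$: no single piece detects the deeper centralizers $\Cen_G(G'/(G')^{p^j})$ for $j>1$. Thus one must combine the full filtered structure of $\augNor{G'}{G}$ with the canonical class-sum basis of $\ZZ(kG)$ from \Cref{ClCoset}, using the cyclicity of $\Aut(G')$ (which fails for $p=2$) as the bridge between the algebraic data and the group-theoretic centralizer $\Cen_G(G')$. A technical difficulty is showing that the subspace produced by this construction is spanned by exactly those $g-1$ with $g\in \Cen_G(G')$, rather than by some proper super- or subset; this will require matching the $p$-adic depth of the class sum of $g$ precisely with the filtration level at which $H_g$ lives, using the cyclic $\Aut(G')$-structure in an essential way.
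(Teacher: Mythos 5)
The paper itself offers no proof of this lemma: it is imported verbatim from \cite[Theorem~4.2(1)]{GLdRS2022}, so the comparison here is against your blind attempt alone, and that attempt has a genuine error at its central step. Writing $\hat C_g=\hat H_g\,g$ with $H_g=\{[x,g^{-1}]:x\in G\}$ as in \Cref{ClCoset}, the subgroup $H_g$ is the set of commutators of $g$ with all of $G$; it is trivial if and only if $g\in\ZZ(G)$, \emph{not} if and only if $g\in\Cen_G(G')$. For example, in the extraspecial group of order $p^3$ one has $G'=\ZZ(G)$ of order $p$, hence $\Cen_G(G')=G$, while $H_g=G'\neq 1$ for every non-central $g$. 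More generally $|H_g|=[G:\Cen_G(g)]$, so $H_g$ measures how far $g$ is from being central and carries no information about the containment $G'\subseteq\Cen_G(g)$, nor about the $p$-adic valuation of $k(g)-1$; it does not ``encode the depth'' of $g$ in the chain $\Cen_G(G')\le\Cen_G(G'/(G')^p)\le\cdots$. Consequently your construction would recover (at best) $\augNor{\ZZ(G)}{G}$ rather than $\augNor{\Cen_G(G')}{G}$.

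Two further steps are also unjustified. First, an isomorphism of $k$-algebras $kG\to kH$ maps $\ZZ(kG)$ onto $\ZZ(kH)$, but it is not known to map class sums to class sums, so ``the canonical class-sum basis of $\ZZ(kG)$'' is not available as stated; note also that $|\Cen_G(g)|$ is a power of $p$, so the element $\hat C_g-|\Cen_G(g)|\,g$ you propose to track is just $\hat C_g$ whenever $g$ is non-central. Second, individual elements $g-1$ are not canonical (the group basis is not determined by the $k$-algebra $kG$), so ``the $kG$-span of the elements $g-1$ with $H_g=1$'' is not a construction an algebra isomorphism is known to respect: canonicity of an ideal $\augNor{N}{G}$ has to be established by an intrinsic characterization of the ideal itself, which is what the cited reference does and what is missing here.
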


We note that, if  $G\mapsto \augNor{N_G}{G}$ is canonical in $\G$, where $N_G$ is a normal subgroup of $G$, then an easy induction  on $n$ shows that $G\mapsto \Aug{n}{N_G,G})$ is canonical in $\G$ too.

Now suppose that for each $G$ in $\G$ we have associated a map $f_G: U_G \to V_G$, with $U$ and $V$ products of canonical subquotients in $\G$.
We say that $G\mapsto f_G$ is \emph{canonical} if for every isomorphism $\psi :kG\to k H$ the following square is commutative
$$\xymatrix{
	U_G \ar[d]_-{\tilde \psi} \ar[r]^-{f_G} & V_G \ar[d]^-{\tilde  \psi} \\
	U_H \ar[r]_-{f_H} & V_H 
} $$
For example, the assignation $G\mapsto \Lambda_G^n$ described above is canonical in the class of finite $p$-groups, and so is $G\mapsto \Delta_G$, where $\Delta_G$ is the natural projection:
$$\Delta_G:\frac{\augNor{G'}{G}}{\aug{G'}\aug{G}} \longrightarrow \frac{\augNor{G'}{G}+\aug{G}^3}{\aug{G}^3}, \quad x+ \aug{G'}\aug{G} \mapsto x+ \aug{G}^3.$$
Observe that $\Delta_G$ is well defined homomorphism of $k$-algebras because $\aug{G'}\subseteq \aug{G}^2$.

In order to simplify notation, instead of writing ``$G\mapsto A_G$ is canonical'' we just write ``$A_G$ is canonical'', where $A_G$ is either a product of subquotients or a map between canonical products of subquotients. 

For mnemonic purposes we use variations of the symbols $\Lambda^n$ and $\Upsilon^n$ for maps of the kind $x\mapsto x^{p^n}$.
Moreover we will encounter a number of projection maps of the kind $x+I\mapsto x+J$ for ideals $I\subseteq J$, for which we use variations of the symbols $\Delta, \zeta$ and $\nu$, with the hope they help the reader to recall the domain: $\Delta$ refers to derived subgroup, $\zeta$ to center and $\nu$ to some normal subgroup $N$. Other projection maps are denoted with variations of $\pi$ and $\eta$.

\section{2-generated finite $p$-groups with cyclic derived subgroup}\label{SectionApp2Gen}

The non-abelian 2-generated finite $p$-groups with cyclic derived subgroup have been classified in \cite{OsnelDiegoAngel} in terms of numerical invariants.
For the reader's convenience, we include in the following theorem a simplification of this classification for the case $p>2$.

\begin{theorem}[\cite{OsnelDiegoAngel}]\label{Main}For a list of non-negative integers $I=(p,m,n_1,n_2,o_1,o_2,o'_1,o'_2,u_1,u_2)$ where $p>2$ is a prime number,  let  $\G_I$ be the group  defined by 
	$$\G_I=\GEN{b_1,b_2,a=[b_2,b_1] \mid a^{p^m}=1, a^{b_i}=a^{r_i}, b_i^{p^{n_i}}=a^{u_ip^{m-o'_i}}},$$
	where 	\begin{equation}\label{Erres}
		r_1=1+p^{m-o_1} \qand 
		r_2 = \begin{cases} 1+p^{m-o_2}, & \text{if } o_2>o_1; \\ r_1^{p^{o_1-o_2}}, & \text{otherwise}. \end{cases}
	\end{equation}
Then $I\mapsto [\G_I]$, where $[\G_I]$ denotes the isomorphism class of $\G_I$, defines a bijection  between  the set of lists of integers $(p,m,n_1,n_2,  o_1,o_2,o'_1,o'_2,u_1,u_2)$ satisfying conditions \ref{1}-\ref{6}, and the isomorphism classes of $2$-generated non-abelian groups of odd prime-power  order with cyclic derived subgroup.
	\begin{enumerate}[label=$(\Roman*)$] 
		\item \label{1} $p$ is prime and $n_1\geq n_2 \ge 1 $.
		\item \label{2}  $0\le o_i<\min(m,n_i)$, $0\le o'_i \le m-o_i$ and $p\nmid u_i$ for $i=1,2$.
		%		\item \label{3}$0\le o'_i \le m-o_i$ for $i=1,2$.
		\item \label{3} One of the following conditions holds:  
		\begin{enumerate}[label=$(\alph*)$] 
			\item $o_1=0$ and $o'_1\le o'_2\le o'_1+o_2+n_1-n_2$.  
			\item $o_2=0<o_1$, $n_2<n_1$ and $o'_1+\min(0,n_1-n_ 2  -o_1)\le o'_2\le o'_1+n_1-n_2$.
			\item $0<o_2< o_1<o_2+n_1-n_2$ and $o'_1\le o'_2\le o'_1+n_1-n_2$. 
		\end{enumerate}	
		
		\item \label{4}   $o_2+o_1'\leq m\le n_1$  and one of the following conditions hold:
		\begin{enumerate}[label=$(\alph*)$] 
			\item $o_1+o'_2\le m \le n_2$.
			\item $2m-o_1-o'_2=n_2<m$ and $u_2\equiv 1 \mod p^{m-n_2}$.		
		\end{enumerate}    
		
		%		\item\label{6} The following conditions hold: 
		%		\begin{enumerate}[label=$(\alph*)$]     
		%			\item   If $o_1=0$ then  $o'_1\le o'_2\le o'_1+o_2+n_1-n_2$. 
		%			\item  If $o_2=0<o_1$ then $o'_1+\min(0,n_1-n_ 2  -o_1)\le o'_2\le o'_1+n_1-n_2$.			
		%			\item  If $0<o_2<o_1$ then $o'_1\le o'_2\le o'_1+n_1-n_2$. 
		%		\end{enumerate}	
		\item \label{5}$ 1\le u_1  \leq p^{a_1}$, where 
		$a_1=\min(o'_1,o_2+\min(n_1-n_2+o'_1-o'_2,0)).$
		\item \label{6}One of the following conditions holds:
		\begin{enumerate}[label=$(\alph*)$] 
			\item $ 1\le  u_2 \leq p^{a_2}$.
			\item $o_1o_2\neq 0$, $n_1-n_2+o_1'-o_2'=0<a_1$,  $1+p^{a_2}\leq u_2 \leq 2p^{a_2}$, and $u_1 \equiv 1\mod p$;  
		\end{enumerate}
		where  
		$$a_2=  \begin{cases}
			0, &\text{if } o_1=0; \\
			\min(o_1,o'_2,o'_2-o'_1+\max(0,o_1+n_2-n_1)), & \text{if } o_2=0<o_1; \\ \min(o_1-o_2,o'_2-o'_1), & \text{otherwise.} \end{cases}$$
		%		\end{align*}
	\end{enumerate}
\end{theorem}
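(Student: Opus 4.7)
Since the statement compiles results from \cite{OsnelDiegoAngel}, my plan is a high-level sketch of how one would derive the classification from scratch, not a guess at what the authors will write here. Given a non-abelian 2-generated $p$-group $G$ with $p$ odd and cyclic $G'=\langle a\rangle$ of order $p^m$, the first step is to produce a presentation of the prescribed shape. Choose generators $x,y$ of $G$; since $G'$ is cyclic, its normal closure by $[y,x]$ coincides with $\langle [y,x]\rangle$, which therefore generates $G'$, so after replacing $a$ we may set $b_1=x$, $b_2=y$, $a=[b_2,b_1]$. The conjugation action on $G'$ is by power maps, so $a^{b_i}=a^{r_i}$ with $r_i\equiv 1\pmod p$; writing $r_i=1+s_ip^{m-o_i}$ with $p\nmid s_i$ and rescaling $a$, one can normalize $s_1=1$, after which applying both actions to the defining identity $a=[b_2,b_1]$ forces $r_2$ into one of the two shapes in \eqref{Erres}. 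Then define $n_i$ as the smallest exponent with $b_i^{p^{n_i}}\in G'$ and write $b_i^{p^{n_i}}=a^{u_ip^{m-o'_i}}$ with $p\nmid u_i$ (absorbing degenerate cases by suitable conventions).

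The next step is to verify conditions \ref{1}--\ref{6}. These arise by expanding consistency relations, typically via Hall--Petresco collection formulas applied to powers of $b_1b_2$ and the identity $(b_i^{p^{n_i}})^{b_j}=(a^{u_ip^{m-o'_i}})^{r_j}$. For example, condition \ref{4} encodes that $b_i^{p^{n_i}}$ is central modulo the controlled twist permitted by $r_j^{p^{n_i}}$, with the exceptional clause \ref{4}(b) corresponding to the situation in which the congruence $r_j^{p^{n_i}}\equiv 1\pmod{p^m}$ just barely fails. The ranges for $u_1,u_2$ in \ref{5} and \ref{6} are the normal forms under the substitutions $b_i\mapsto b_ia^c$ and $a\mapsto a^t$, and the case split in \ref{3} and \ref{6} reflects the qualitative distinction between the subcases $o_1=0$, $o_2=0<o_1$, and $0<o_2<o_1$, which controls whether one or both generators act non-trivially on $G'$.

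Finally, one must show that $I\mapsto[\G_I]$ is a bijection. Surjectivity is covered by the normalization step. Existence of $\G_I$ of the expected order, for every admissible tuple, is verified by writing a polycyclic presentation in $a,b_1,b_2$ and checking that all consequences of the relations close up — no unintended collapse occurs, and in particular $|\G_I|=p^{m+n_1+n_2-\text{corrections}}$. The main obstacle is injectivity: recovering $I$ from $[\G_I]$ uniquely. Each entry is read off intrinsically — $m=\log_p|G'|$, the $n_i$ from the structure of $G/G'$ together with orders of canonical preimages, $o_i$ from the action of $b_i$ on $G'$ via $[G':\Cen_{G'}(b_i)]$, and $o'_i$ from the position of $b_i^{p^{n_i}}$ in the subgroup chain of $G'$. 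The $u_i$ are subtler because they are only defined modulo a normalization; the truly awkward step is the edge cases \ref{4}(b) and \ref{6}(b), where the usual normalization range degenerates and a secondary invariant must be extracted to separate isomorphism classes.
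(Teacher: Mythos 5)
You should first be aware that the paper does not prove this theorem at all: it is quoted (in a form simplified for $p>2$) from the external reference on which the whole paper is built, and the authors explicitly include it only ``for the reader's convenience.'' So there is no in-paper proof to compare your attempt against, and any proof would have to reproduce the substance of that reference.

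Measured against that standard, what you have written is a correct description of the strategy but not a proof; every step where the actual mathematical content lives is named rather than carried out. Concretely: (i) you assert that the presentations close up with the expected order ``no unintended collapse occurs,'' but this is precisely what must be checked, and it is nontrivial -- the consistency of the relation $b_i^{p^{n_i}}=a^{u_ip^{m-o_i'}}$ with the conjugation action is exactly what forces the inequality $o_2+o_1'\le m\le n_1$ and the exceptional clause in condition (IV)(b), including the congruence $u_2\equiv 1 \bmod p^{m-n_2}$, none of which you derive; (ii) the precise shape of conditions (III), (V) and (VI) -- in particular the exact formulas $a_1=\min(o_1',o_2+\min(n_1-n_2+o_1'-o_2',0))$ and the three-case formula for $a_2$, and the strange secondary range $1+p^{a_2}\le u_2\le 2p^{a_2}$ in (VI)(b) -- cannot be guessed from qualitative considerations; they come out of a complete analysis of which generating pairs $(b_1,b_2)$ yield a presentation of the given shape and how the parameters transform under a change of such a pair, and that analysis is the bulk of the work; (iii) injectivity, which you correctly identify as the main obstacle, is not addressed beyond saying that each entry is ``read off intrinsically'' -- but the $u_i$ are not intrinsic, they depend on the choice of generators, and showing that the stated ranges select exactly one representative per orbit of the base-change action is the hardest part of the classification. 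In short, your outline would be an acceptable roadmap for reconstructing the proof, but as it stands it establishes neither that every admissible tuple yields a group of the claimed order, nor that distinct admissible tuples yield non-isomorphic groups.
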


For every non-abelian 2-generated finite $p$-group $\Ge$ with cyclic derived subgroup and $p$ odd, let $\inv(\Ge)$ denote the unique list satisfying the conditions of the previous theorem such that $\Ge$ is isomorphic to $\G_{\inv(\Ge)}$.
An explicit description of $\inv(\Ge)$ can be found in \cite{OsnelDiegoAngel} and also in \cite{GLdRS2022}. In these references the list $\inv(\Ge)$ has two additional entries
$\sigma_1$ and $\sigma_2$ which for $p>2$ always equal 1, so we drop them.

In this section $\Ge$ is a $2$-generated finite $p$-group with cyclic derived subgroup, and we set
 $$\inv(\Ge)=(p,m,n_1,n_2,o_1,o_2,o'_1,o'_2,u_1,u_2).$$
Hence $\Ge$ is given by the following presentation 
	$$\Ge=\GEN{b_1,b_2 \mid a=[b_2,b_1], a^{b_i}=a^{r_i}, b_i^{p^{n_i}}=a^{u_i p^{m-o'_i}}},$$
where $r_1$ and $r_2$ are as in \eqref{Erres}.
By \cite[Lemma~3.5]{GLdRS2022}, 
\begin{equation}\label{gamma}
\gamma_n(\GG) = \GEN{a^{p^{(n-2)(m-\max(o_1,o_2))}}}, \text{ for } n\ge 2.
\end{equation}
In particular $[\Ge,\Ge']=\gamma_3(\Ge)\subseteq \GEN{a^p}=(\Ge')^p$, and hence, by \Cref{JotasLema},
	$$\Aug{n}{\Ge',\Ge}=\aug{\Ge'}^n\aug{\Ge} \text{ for every } n\ge 1.$$ 
By \cite[Lemma~2.2]{GLdRS2022}, there is a unique integer $\delta$ satisfying
\begin{equation}\label{eq:CenteredCongruence}
1\le \delta \le p^{o_1} \qand \Ese{r_2}{\delta p^{m-o_1}}\equiv -p^{m-o_1}\mod p^m.
\end{equation} 		
Moreover, $p\nmid \delta$.
By \cite[Lemma 3.7]{GLdRS2022}
\begin{equation}\label{eq:centerGens}
\ZZ(\Ge)=\GEN{b_1^{p^m}, b_2^{p^m}, c}, \quad \text{where }c=\begin{cases}
b_1^{\delta p^{m-o_2}}a, & \text{if }o_1=0; \\
b_1^{-\delta p^{m-o_2} } b_2^{\delta p^{ m-o_1}} a, & \text{otherwise}.
\end{cases}
\end{equation}

Observe that 
\begin{equation}\label{nMenorni}
n< n_i \quad \text{implies} \quad b_i^{p^n}\not\in \M_{p^n+1}(\Ge)\Ge', \text{ for } i=1,2.
\end{equation}
Furthermore, for every $n\ge 0$,
\begin{equation}\label{Mpn}
\M_{p^n}(\GG)=\GG^{p^n}.
\end{equation}
To prove this t suffices to show that $ip^j\ge p^n$ implies $\gamma_i(\GG)^{p^j}\subseteq \Gamma^{p^n}$.
This is clear if $j\ge n$.
Otherwise, $j<n$, $i\ge 2$ and $i-2\ge p^{n-j}-2\ge n-j$, since $p\ge 3$.
Using \eqref{gamma} we obtain that $\gamma_i(\GG)^{p^j}=\GEN{a^{p^{j+(i-2)(m-\max(o_1,o_2))}}} \subseteq \GEN{a^{p^n}} \subseteq \GG^{p^n}$. Thus \eqref{Mpn} follows.

Moreover, 
 \begin{equation} \label{eq:n1=m}
 	n_1=m \qquad \text{implies} \qquad  o_1o_2=0.
\end{equation}
 To see this, observe that if $o_1o_2>0$ and $n_1=m$ then $m>n_2$ by condition \ref{3}, so $n_2=2m-o_1-o_2'$ by condition \ref{4}. Thus, by conditions \ref{2} and \ref{3},  $o_1-o_2< n_1-n_2=o_1+o_2'- m \leq o_1-o_2$, a contradiction.

In the rest of this section we assume the following:
	\begin{equation}\label{Assumptions}
	o_1\ne o_2, \quad 0<\max(o'_1,o'_2)<m \qand n_2\ge 2.
	\end{equation}
In the next section we will see that this is the only case of interest, as if any of these conditions fails, then the Modular Isomorphism Problem has a positive solution for $\Gamma$.
	
Observe that if $n<m-1$ then $\aug{\Ge'}^{p^n}k\Ge/\aug{\Ge'}^{p^n}\aug{\Ge}$ is a one-dimensional $k$-space generated by the class of $a^{p^n}-1$.
Moreover the image of
$\Delta_\Ge$ is spanned by $a-1+ \aug{\Ge}^3$.
As $p$ is odd, $\Ge^p = \M_3(\Ge)$, and as $\max(o'_1,o'_2)<m$,
$a\not\in \Ge^p$. Thus $a-1\not\in \aug{\Ge}^3$. Then, we have the following

\begin{lemma}
$\Delta_\Ge$ is an isomorphism.
 \end{lemma}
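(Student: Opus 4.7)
The plan is short, because essentially all the work has been done in the paragraph immediately preceding the statement. First I would check that the map is well-defined and surjective by construction: since $\aug{\Ge'}\subseteq \aug{\Ge}^2$, we have $\aug{\Ge'}\aug{\Ge}\subseteq \aug{\Ge}^3$, so the rule $x+\aug{\Ge'}\aug{\Ge}\mapsto x+\aug{\Ge}^3$ is a well-defined $k$-linear map, and every class in $(\augNor{\Ge'}{\Ge}+\aug{\Ge}^3)/\aug{\Ge}^3$ admits a representative in $\augNor{\Ge'}{\Ge}$, so the map surjects onto its codomain. Hence the only thing to verify is injectivity.

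For injectivity I would use the observation recorded just before the lemma, applied with $n=0$: the domain $\augNor{\Ge'}{\Ge}/\aug{\Ge'}\aug{\Ge}$ is a one-dimensional $k$-space spanned by the class of $a-1$. Its image under $\Delta_\Ge$ is $a-1+\aug{\Ge}^3$, and the same preceding observation, combined with \eqref{Mpn}, gives $a\notin \Ge^p=\M_3(\Ge)$ because $\max(o_1',o_2')<m$; by the defining property of the Jennings series this means $a-1\notin\aug{\Ge}^3$. Therefore $\Delta_\Ge$ is a nonzero linear map out of a one-dimensional $k$-space, hence injective, and together with the preceding surjectivity step we conclude that it is an isomorphism. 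I do not foresee any real obstacle here: the lemma is effectively packaging the two bulleted remarks preceding it into a named fact for later reference.
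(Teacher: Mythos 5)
Your proposal is correct and follows essentially the same route as the paper, which likewise packages the preceding observations: the domain is one-dimensional spanned by the class of $a-1$ (the $n=0$ case, valid since $m\ge 2$ under \eqref{Assumptions}), and $a-1\notin\aug{\Ge}^3$ because $a\notin\M_3(\Ge)=\Ge^p$ as $\max(o_1',o_2')<m$. No gaps.
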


\begin{lemma}\label{WeightClassSums}
 $\hat C\in\aug{\Ge}^{(p-1)p^m}$ for each non-central conjugacy class $C$ of $\Ge$.
\end{lemma}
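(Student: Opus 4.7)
The plan is to reduce the statement to a single weight estimate and then extract that estimate from the defining relations of $\Ge$. By \Cref{ClCoset}, any non-central conjugacy class has the form $C=Hg$ with $H$ a non-trivial subgroup of the cyclic group $\Ge'$; hence $H=\GEN{a^{p^k}}$ for some $0\le k\le m-1$, and $\hat{C}=\hat{H}g$. Since $\aug{\Ge}^n$ is a two-sided ideal, it suffices to prove $\hat{H}\in\aug{\Ge}^{(p-1)p^m}$.

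Inside the commutative subalgebra $k\Ge'\cong k[X]/(X-1)^{p^m}$, a standard coefficient comparison in the cyclic group algebra gives $\hat{H}=(a^{p^k}-1)^{p^{m-k}-1}$; combined with the Frobenius identity $(a-1)^{p^i}=a^{p^i}-1$ and the base-$p$ expansion $p^m-p^k=\sum_{i=k}^{m-1}(p-1)p^i$, one then factors
\[
	\hat{H}=(a-1)^{p^m-p^k}=\prod_{i=k}^{m-1}(a^{p^i}-1)^{p-1}.
\]
Writing $w_i$ for the largest $n$ with $a^{p^i}\in\M_n(\Ge)$, this yields $\hat{H}\in\aug{\Ge}^{(p-1)\sum_{i=k}^{m-1}w_i}$, so the claim would follow from $w_{m-1}\ge p^m$.

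The hard part is precisely this last inequality. The naive bound $a^{p^{m-1}}\in(\Ge')^{p^{m-1}}\subseteq\M_{2p^{m-1}}(\Ge)$ falls short by a factor of roughly $p/2$, so one cannot squeeze more out of the commutator weight of $a$ alone; instead $a^{p^{m-1}}$ must be exhibited as a very high $p$-power through a defining relation. By \eqref{Assumptions}, $\max(o'_1,o'_2)\ge 1$, so we can choose $i\in\{1,2\}$ with $o'_i\ge 1$. Raising $b_i^{p^{n_i}}=a^{u_ip^{m-o'_i}}$ to the $p^{o'_i-1}$-th power gives $b_i^{p^{n_i+o'_i-1}}=a^{u_ip^{m-1}}$; since $p\nmid u_i$, the element $a^{p^{m-1}}$ is a group-theoretic power of $b_i^{p^{n_i+o'_i-1}}$. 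By \eqref{nMenorni} (applied with $n=0$) one has $b_i\in\M_1(\Ge)\setminus\M_2(\Ge)$, and the $N_p$-series property $\M_1(\Ge)^{p^t}\subseteq\M_{p^t}(\Ge)$ then yields $a^{p^{m-1}}\in\M_{p^{n_i+o'_i-1}}(\Ge)$, so $w_{m-1}\ge p^{n_i+o'_i-1}$.

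To finish one checks $n_i+o'_i\ge m+1$ in each case: if $i=1$, then $n_1\ge m$ from condition \ref{4} combines with $o'_1\ge 1$; otherwise $o'_1=0$ and $o'_2\ge 1$, and in case (a) of \ref{4} we have $n_2\ge m$, while in case (b) we have $n_2+o'_2=2m-o_1\ge m+1$ because $o_1<m$. This gives $w_{m-1}\ge p^m$ and hence the lemma.
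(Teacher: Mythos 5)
Your proof is correct and follows essentially the same route as the paper's: both reduce via \Cref{ClCoset} to writing $\hat C=\hat Hg$ with $\hat H=(a^{p^k}-1)^{p^{m-k}-1}$, and both get the required weight from the single factor $(a^{p^{m-1}}-1)^{p-1}$, using the relation $b_i^{p^{n_i+o_i'-1}}=a^{u_ip^{m-1}}$ (for an $i$ with $o_i'>0$) to place $a^{p^{m-1}}$ in $\M_{p^{n_i+o_i'-1}}(\Ge)$ and then checking $n_i+o_i'-1\ge m$ from condition \ref{4}. The only cosmetic difference is that you expand $\hat H$ as the full product $\prod_{i=k}^{m-1}(a^{p^i}-1)^{p-1}$ before discarding all but the top factor, whereas the paper splits off that factor directly.
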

\begin{proof}
By hypothesis $o_i'>0$ for some $i\in \{1,2\}$. 
In that case $m \leq   n_i+o_i'-1$, by condition \ref{4}. Thus, it is enough to show that if $o'_i>0$, then $\hat C\in \aug{\Ge}^{(p-1)p^{n_i+o_i'-1}}$. 
	
If $x$ is an indeterminate over $k$ and $n\ge 1$ then we have 
	$$\sum_{i=1}^{p^n-1} x^i = \frac{x^{p^n}-1}{x-1} = (x-1)^{p^n-1}.$$ 
	Hence, using \Cref{ClCoset}  for each $C\in \Cl(\Ge)$ such that $|C|>1$, and $g\in C$, there exists $0\leq n<m$ such that
	\begin{eqnarray*}
		\hat C= \sum_{i=0}^{p^{m-n}-1} a^{ip^n}g= (a^{p^n}-1)^{p^{m-n}-1} g&=&(a^{p^n}-1)^{(p-1)p^{m-n-1} }  (a^{p^n}-1)^{ p^ {m-n-1}-1}g  \\ 
		& =&(a^{p^{m-1}}-1)^{(p-1)} (a^{p^n}-1)^{p^{m-n-1}-1}g,
	\end{eqnarray*}
	and this element belongs to $\aug{\Ge}^{ (p-1)p^{n_i+o_i'-1} }$, as   the  hypothesis  $o'_i>0$ implies
	$$a^{p^{m-1}}=b_i^{p^{n_i+o_i'-1}}\in \M_{p^{n_i+o_i'-1}}(\Ge).$$
\end{proof}

In the remainder of the section we consider a series of subquotients of $k\Ge$ and maps which, by construction, are canonical in the class of 2-generated finite $p$-groups with cyclic derived subgroup satisfying \eqref{Assumptions}, and will play a central rôle in the proof of our main results.

Recall from \cite[Lemma~6.10]{Sandling85} that
\begin{equation}\label{eq:center}
	\ZZ(\aug{\Ge})=\aug{\ZZ(\Ge)}\oplus \left( \bigoplus_{C\in \Cl(\Ge),|C|>1} k \hat C \right).
\end{equation}  

Observe that as $o_i<m$ for $i=1,2$, $c\in \M_2(\Ge)$, where $c$ is as in \eqref{eq:centerGens}, hence $c-1\in \aug{\Ge}^2$.
Then \Cref{WeightClassSums} and \eqref{eq:center} yield
\begin{eqnarray}\label{eq:killcc}
\begin{split}
	\frac{\ZZ(I( \Ge))+ \aug{\Ge}^{p^m}}{\aug{\Ge}^{p^m}} &=
	\frac{\aug{\ZZ(\Ge)}+ \aug{\Ge}^{p^m}}{\aug{\Ge}^{p^m}} \\
	&=
	\frac{ k(c-1)+ k(c-1)^2+\dots + k(c-1)^{\frac{p^{m}-1}{2}} +\aug{\Ge}^{p^{m}}}{\aug{\Ge}^{p^{m}}}.
\end{split}
\end{eqnarray}
Hence, 
$$ \frac{ \ZZ( \aug{\Ge})+\aug{\Ge}^{3 }}{\aug{\Ge}^{3}}=\frac{k(a-1)+\aug{\Ge}^{3 }}{\aug{\Ge}^{3 }} $$
since $c-a\in \aug{\Ge}^3$, and, for $o=\max(o_1,o_2)$,
\begin{equation}\label{ZIGpm}
\frac{ \ZZ( \aug{\Ge})+\aug{\Ge}^{p^{m-o}+1} +\augNor{\Ge'}{\Ge}}{\aug{\Ge}^{p^{m-o}+1}+\augNor{\Ge'}{\Ge}}= \begin{cases}
\frac{ k(b_1^{  p^{m-o_2}}-1)+\aug{\Ge}^{p^{m-o_2}+1} +\augNor{\Ge'}{\Ge}}{\aug{\Ge}^{p^{m-o_2}+1}+\augNor{\Ge'}{\Ge}}, & \text{if }o_1=0; \\
\frac{ k(b_2^{  p^{m-o_1}}-1)+\aug{\Ge}^{p^{m-o_1}+1} +\augNor{\Ge'}{\Ge}}{\aug{\Ge}^{p^{m-o_1}+1}+\augNor{\Ge'}{\Ge}}, &\text{if }o_1\ne 0.
\end{cases}
\end{equation}
This subquotient of $k \Ge$ is one-dimensional by \eqref{nMenorni} and \eqref{UsefiN}. %\cite[Lemma~4.10]{GL2022}.

Then we consider the canonical maps
$$\zeta_\Ge^1: \frac{ \ZZ( \aug{\Ge})+\aug{\Ge}^{p^{m}}}{\aug{\Ge}^{p^{m}}} \to \frac{ \ZZ( \aug{\Ge})+\aug{\Ge}^{3 }}{\aug{\Ge}^{3 }}, \ w+\aug{\Ge}^{p^m}\mapsto w+ \aug{\Ge}^3,$$
and 

$$ \zeta_\Ge^2: \frac{ \ZZ( \aug{\Ge})+\aug{\Ge}^{p^{m}}}{\aug{\Ge}^{p^{m}}} \to \frac{ \ZZ( \aug{\Ge})+\aug{\Ge}^{p^{m-o}+1} +\augNor{\Ge'}{\Ge}}{\aug{\Ge}^{p^{m-o}+1}+\augNor{\Ge'}{\Ge}}, \ w+\aug{\Ge}^{p^{m}}\mapsto w+\aug{\Ge}^{p^{m-o}+1}+\augNor{\Ge'}{\Ge}.$$
It is immediate that for $x_1,\dots,x_{(p^m-1)/2}\in k$, 
$$\zeta_\Ge^1\left(\sum_{i=1}^{\frac{p^m-1}{2}} x_i(c-1)^i+\aug{\Ge}^{p^m}\right)= x_1(a-1)+\aug{\Ge}^3 $$
and 
$$\zeta_\Ge^2\left(\sum_{i=1}^{\frac{p^m-1}{2}} x_i(c-1)^i+\aug{\Ge}^{p^m}\right)=\begin{cases}
x_1 (b_1^{p^{m-o_2}}-1)+\aug{\Ge}^{p^{m-o}+1}+\augNor{\Ge'}{\Ge}, &\text{if }o_1=0;\\
x_1\delta(b_2^{p^{m-o_1}}-1)+\aug{\Ge}^{p^{m-o}+1}+\augNor{\Ge'}{\Ge}, & \text{if }o_1\ne 0.
\end{cases}$$
The first implies that $\Imagen(\zeta^1_\Ge)=\Imagen(\Delta_\Ge)$.

For each $n\geq 1$ let  
	$$\mathcal{C}_\Ge=\frac{\augNor{C_\Ge(\Ge')}{\Ge}+\aug{\Ge}^2}{\aug{\Ge}^2}= \begin{cases}
 	\frac{k(b_1-1)+\aug{\Ge}^2}{\aug{\Ge}^2}, &\text{if }o_1=0; \\
 	\frac{k(b_2-1)+\aug{\Ge}^2}{\aug{\Ge}^2}, & \text{if } o_1\ne 0.
 	\end{cases} $$ 
Then
\begin{equation}\label{ImagenLambda}
\Lambda_{\Ge}^{n}(\mathcal{C}_\Ge)= \begin{cases}\frac{k(b_1-1)^{p^{n}}+ \aug{\Ge}^{p^{n}+1}}{\aug{\Ge}^{p^{n}+1}}, &\text{if }o_1=0; \\
\frac{k(b_2-1)^{p^n}+ \aug{\Ge}^{p^n+1}}{\aug{\Ge}^{p^n+1}},&\text{if }o_1\ne 0.
\end{cases}
\end{equation}
Let $\tilde \Lambda_{\Ge }^{n}:\mathcal{C}_\Ge\to \Lambda^n_\Ge(\mathcal{C}_\Ge)$ be the  restriction of $\Lambda_{\Ge }^{n}$ to $\mathcal{C}_\Ge$. 
%Then 
%$$ \Imagen(\tilde\Lambda_{\Ge}^{ m-o } )= \begin{cases}\frac{k(b_1-1)^{p^{m-o_2}}+ \aug{\Ge}^{p^{m-o_2}+1}}{\aug{\Ge}^{p^{m-o_2}+1}}, &\text{if }o_1=0; \\
%\frac{k(b_2-1)^{p^{m-o_1}}+ \aug{\Ge}^{p^{m-o_1}+1}}{\aug{\Ge}^{p^{m-o_1}+1}},&\text{if }o_1\ne 0.
%\end{cases}$$
By \eqref{nMenorni}, 
\begin{equation}\label{LambdaTildeIso}
\text{if either $o_1=0$ and $n<n_1$ or $o_1\ne 0$ and $n<n_2$, then $\tilde\Lambda^n_\Ge$ is an isomorphism.}
\end{equation} 
Observe that $m-o<n_i$ for $i=1,2$. Indeed, if $m-o\ge n_i$ then, as $o>0$ and $o'_2<m$, by condition \eqref{Assumptions}, $i=2$ and $n_2=2m-o_1-o'_2> m-o_1\ge m-o$, a contradiction. 
Thus $\tilde\Lambda^{m-o}_\Ge$ is an isomorphism and hence $\Lambda^{m-o}_\Ge(\mathcal{C}_\Ge)$ is one-dimensional.
Therefore we have isomorphisms
	\begin{eqnarray}\label{CG}
	\mathcal C_\Ge 
	\stackrel{\tilde \Lambda^{m-o}_\Ge}{\longrightarrow} 
	\Lambda^{m-o}_\Ge(\mathcal{C}_\Ge) 
	\stackrel{\pi_\Ge}{\longrightarrow} \frac{ \ZZ( \aug{\Ge})+\aug{\Ge}^{p^{m-o}+1} +\augNor{\Ge'}{\Ge}}{\aug{\Ge}^{p^{m-o}+1}+\augNor{\Ge'}{\Ge}}
	\end{eqnarray}
where $\pi_\Ge$ is another natural projection, i.e.  
	$\pi_\Ge\left(x+\aug{\Ge}^{p^{m-o}+1}\right) = x+\aug{\Ge}^{p^{m-o}+1}+\augNor{\Ge'}{\Ge}$.

\section{Proof of the main results}\label{SectionProofs}

Recall that $p$ is an odd prime integer and $k$ the field with $p$ elements. 
For the remainder of the paper, we fix the following notation. Let $G$ denote a $2$-generated finite $p$-group with cyclic derived subgroup, let $H$ denote another group and let  $\psi:kG\rightarrow kH$ be an isomorphism of $k$-algebras. 
By \cite[Theorem~C]{GLdRS2022}, $H$ is $2$-generated with cyclic derived subgroup, and $\inv(G)$ and $\inv(H)$ coincide in all but the last entries. 
So we may write
	$$\inv(G)=(p,m,n_1,n_2,o_1,o_2,o'_1,o'_2,u_1^G,u_2^G) \qand
\inv(H)=(p,m,n_1,n_2,o_1,o_2,o'_1,o'_2,u_1^H,u_2^H).$$
To give a positive answer to the Modular Isomorphism Problem in this case we should prove that $G\cong H$, or equivalently that $u_i^G=u_i^H$ for $i=1,2$.
Unfortunately, we are only able to prove the statement of \Cref{theorem2}, namely that $u_2^G\equiv u_2^H \mod p$ and, under some extra assumptions, that $u_1^G\equiv u_1^H \mod p$.

Since the Modular Isomophism Problem has positive solutions for metacyclic groups \cite{San96}, and for $2$-generated groups of class $2$ \cite{BdR20}, we may assume that the groups $G$ and $H$ are not metacyclic, and both are of class at least 3.
The first is equivalent to $\max(o_1,o_2)>0$ and the second is equivalent to
$\max(o'_1,o'_2)<m$. 
In particular, $m\ge 2$. Moreover $n_2\geq 2$, as otherwise $n_2<m$ and  condition \ref{4} yields $1=n_2=2m-o_1-o_2'$, but this last quantity is strictly greater than $1$ because $\max(o_1,o_2')<m$, by condition \ref{2} and since $\Ge$ is not metacyclic. We also have that $o_1\neq o_2$ by condition \ref{3}. Finally, if $o_i'=0$ for some $i\in \{1,2\}$, then $u_i^G=1=u_i^H$ by conditions \ref{5} and \ref{6}; therefore we can assume that $\max(o_1',o_2')>0$.
Thus the conditions in \eqref{Assumptions} hold, so we can freely use the statements of the previous section.

In order to deal with $G$ and $H$ simultaneously, in the remainder of the paper $\GG$ denotes a 2-generated finite $p$-group with cyclic derived subgroup such that  
	$$\inv(\GG)=(p,m,n_1,n_2,o_1,o_2,o'_1,o'_2,u_1^\GG,u_2^\GG).$$ 
	
\subsection{Proof of \Cref{theorem2}}
	
Recall that $o=\max(o_1,o_2)$. We let 
	$$N_\GG = \begin{cases}
	\Omega_{m-o-1}(\GG:Z(\GG)\GG'), & \text{if either } o_1=0 \text{ or } o_2=0 \text{ and } o'_1\ge o'_2;\\
	\Omega_{n_2-1}(\GG:\GG'), &  \text{otherwise}
	\end{cases}$$
and 
$$\mathcal{N}_\GG=\frac{ \augNor{N_\GG}{\GG}\cap \aug{\GG}^p}{\aug{N_\GG} \aug{\GG}}.$$
By \Cref{lemma:canonicalsubgroups}, the subquotients $\augNor{N_\GG}{\GG}$, $\Aug{n}{N_\GG,\GG}$ and $\mathcal{N}_\GG$ are canonical. 
Moreover, 
\begin{equation}\label{NGG}
N_\GG=\GEN{a,d,e}, \quad \text{ where } \quad 
(d,e) = \begin{cases}
(b_1^p,b_2^{p^{o_2+1}}), & \text{if } o_1=0; \\ 
(b_2^p,b_1^{p^{o_1+1}}), &  \text{if } o_2=0 \text{ and } o'_1\ge o'_2; \\
(b_2^p,b_1^{p^{n_1-n_2+1}}), & \text{otherwise};
\end{cases}
\end{equation}
and $\mathcal{N}_\GG$ is spanned by the classes of $d-1$ and $e-1$.

\begin{lemma}\label{JnN}
For every $n\ge 0$, $\Aug{n}{N_\GG,\GG}=\aug{N_\GG}^n\aug{\GG}$.
\end{lemma}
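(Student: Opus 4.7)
The plan is to reduce the statement to \Cref{JotasLema}. By part (1) of that lemma it suffices to show that $\gamma_i^\GG(N_\GG)\subseteq \M_i(N_\GG)$ for every $i\ge 2$, and by part (2) this in turn follows from the single inclusion
\[
[\GG,N_\GG]\subseteq N_\GG^p.
\]
So the proof reduces to verifying this commutator inclusion for $N_\GG$ in each of the three cases in the definition.

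Because $N_\GG^p$ is a normal subgroup of $\GG$, it is enough to check $[b_j,y]\in N_\GG^p$ for $j\in\{1,2\}$ and $y$ ranging over the generating set $\{a,d,e\}$ of $N_\GG$ described in \eqref{NGG}. For the commutators with $a$, the relations $a^{b_j}=a^{r_j}$ together with \eqref{Erres} give $r_j\equiv 1\bmod p^{m-o}$, so $[b_j,a]=a^{1-r_j}\in \langle a^{p^{m-o}}\rangle$. Since $o<m$ and $a\in N_\GG$, we have $\langle a^{p^{m-o}}\rangle\subseteq \langle a^p\rangle\subseteq N_\GG^p$.

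For the commutators with $d$ and $e$, I will use the identity $[b_j,b_i^s]\equiv [b_j,b_i]^s \bmod \gamma_3(\GG)$, which follows from the fact that $[b_j,b_i]$ is central modulo $\gamma_3(\GG)$. Combined with $[b_2,b_1]=a$, this yields $[b_j,b_i^s]\equiv a^{\pm s}\bmod \gamma_3(\GG)$. Now \eqref{gamma} gives $\gamma_3(\GG)=\langle a^{p^{m-o}}\rangle\subseteq \langle a^p\rangle\subseteq N_\GG^p$. Inspecting \eqref{NGG} case by case, the exponent of $b_i$ appearing in $d$ and $e$ is $p$, $p^{o_2+1}$, $p^{o_1+1}$, or $p^{n_1-n_2+1}$; all of these are divisible by $p$ (the last because $n_1\ge n_2$). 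Hence $[b_j,d]$ and $[b_j,e]$ lie in $\langle a^p\rangle\subseteq N_\GG^p$.

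Putting these together gives $[\GG,N_\GG]\subseteq N_\GG^p$, and \Cref{JotasLema} then produces the desired equality $\Aug{n}{N_\GG,\GG}=\aug{N_\GG}^n\aug{\GG}$. There is no serious obstacle here: the only mild issue is keeping track of the three different definitions of $N_\GG$ simultaneously, but the verification in each one is routine once the commutator expansion modulo $\gamma_3(\GG)$ is in hand.
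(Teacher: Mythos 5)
Your proof is correct, and it takes a genuinely different route from the paper's. The paper computes the relative lower central series $\gamma_i^\GG(N_\GG)$ explicitly in each of the cases of \eqref{NGG} --- including an induction for $i\ge 3$ showing $\gamma_i^\GG(N_\GG)=(\GG')^{p^{1+(i-2)k}}$ with $k\ge 2$ in the harder case --- and then feeds these into the closed formula \eqref{eq72} to absorb all higher terms into $\aug{N_\GG}^n\aug{\GG}$. You instead reduce everything to the single inclusion $[\GG,N_\GG]\subseteq N_\GG^p$ and invoke \Cref{JotasLema}(2) followed by \Cref{JotasLema}(1); this is exactly the mechanism the paper uses for $N=\GG'$ right after \eqref{gamma}, but which it does not reuse in its own proof of \Cref{JnN}. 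Your verification of the commutator inclusion is sound: $[b_j,a]=a^{1-r_j}\in\GEN{a^{p^{m-o_j}}}\subseteq\GEN{a^p}$ since $o_j<m$, and the commutators with $d$ and $e$ land in $\GEN{a^{\pm s}}\gamma_3(\GG)\subseteq\GEN{a^p}$ because every exponent $s$ occurring in \eqref{NGG} is divisible by $p$ and $\gamma_3(\GG)=\GEN{a^{p^{m-o}}}$; since $a\in N_\GG$ in all three cases, $\GEN{a^p}\subseteq N_\GG^p$. In effect you observe that the only fact needed is $[\GG,N_\GG]\subseteq(\GG')^p$ together with $\GG'\subseteq N_\GG$, which the paper itself records ($\gamma_2^\GG(N_\GG)=(\GG')^p$) before doing more work; your version buys a shorter, more uniform argument that avoids the case-by-case computation of the higher relative terms, at the cost of routing through the Jennings-series criterion of \Cref{JotasLema} rather than the explicit formula \eqref{eq72}.
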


\begin{proof}
Suppose first that either $o_1=0$ or $o_2=0$ and $o'_1\ge o'_2$. Then $\gamma_1^\GG(N_\GG)=\GG$,  $\gamma_2^\GG(N_\GG)= (\GG')^p$, and $\gamma_i^\GG(N_\GG)=1$ for $i\geq 3$.
Since $\GG'\subseteq N_\GG$ and , it follows that
$$\aug{N_\GG}^{n-1} \augNor{(\GG')^p}{\GG} \subseteq
\aug{N_\GG}^{n-1+p} k\GG \subseteq  \aug{N_\GG}^{n}\aug{\GG}.$$
Then the desired equality follows from \eqref{eq72}.

Suppose that $o_1\ne 0$ and either $o_2\ne 0$ or $o'_1<o'_2$. Then again $\gamma_1^\GG(N_\GG)=\GG$,  $\gamma_2^\GG(N_\GG)= (\GG')^p$ and $\aug{N_\GG}^{n-1} \augNor{(\GG')^p}{\GG} \subseteq \aug{N_\GG}^{n}\aug{\GG}$. For $i\ge 3$, an easy induction argument, using the description of $N_\GG$ in \eqref{NGG}, shows that $\gamma_i^\GG(N_\GG) = (\GG')^{p^{1+(i-2)k}}$, where $k=n_1-n_2+1+m-o_1$ if $o_2=0$, and $k=1+m-o_2$ otherwise.  
Either way $k\ge 2$ and hence
	$$\aug{N_\GG}^{n+1-i} \augNor{\gamma_i^\GG(N_\GG)}{\GG} \subseteq 
	\aug{N_\GG}^{n+1-i} \augNor{(\GG')^{p^{1+(i-2)k}}}{\GG} \subseteq
	\aug{N_\GG}^{n+1-i+p^{1+(i-2)k}}k\GG \subseteq \aug{N_\GG}^n\aug{\GG}.$$
Then again \eqref{eq72} yields the desired equality.
\end{proof}

Denote 
	$$\ell=\begin{cases} n_1+o'_1-2, & \text{if } o_1=0; \\
	n_2+o'_2-2, & \text{otherwise}.\end{cases}$$ 
Combining \Cref{JnN} and \eqref{Usefi} and using regularity it is easy to obtain 
\begin{equation}\label{JUsefi}
\GG\cap (1+\Aug{p^{\ell}}{N_\GG,\GG}) = 1.
\end{equation}

The next lemma covers most cases of \Cref{theorem2}.

\begin{lemma}\label{MIPUesCaso1}
The following hold:
\begin{enumerate}
	\item $u_2^G\equiv u_2^H \mod p$.
	\item If $o_1o_2=0$ then $u_1^G\equiv u_1^H \mod p$.
\end{enumerate}
\end{lemma}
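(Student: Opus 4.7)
The plan is to read off $u_2^\Ge \bmod p$ (and, under the extra hypothesis $o_1o_2=0$, also $u_1^\Ge \bmod p$) as the coefficient of a canonical basis vector in a one-dimensional canonical subquotient of $k\Ge$ built from the $\Lambda$-maps applied to the canonical two-dimensional space $\mathcal{N}_\Ge$. Since $\psi$ intertwines all the canonical constructions, this coefficient is preserved when we perform the same construction in $kH$, forcing the stated congruences.

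Concretely, the $u_i$'s appear in $b_i^{p^{n_i}}=a^{u_i^\Ge p^{m-o'_i}}$, and the generators $(d,e)$ of $N_\Ge$ from \eqref{NGG} are chosen in each case so that $d^{p^j}=a^{u_2^\Ge p^{m-o'_2}}$ for some $j\le\ell$; when $o_1o_2=0$ a parallel choice yields $a^{u_1^\Ge p^{m-o'_1}}$ from the other generator at some exponent $j'\le\ell$. The Frobenius identity $(d-1)^{p^j}=d^{p^j}-1$ is valid in any $k$-algebra (it involves only one group element, so commutativity is automatic), so
\[
(d-1)^{p^j}=a^{u_2^\Ge p^{m-o'_2}}-1\equiv u_2^\Ge(a-1)^{p^{m-o'_2}} \pmod{\aug{\Ge'}^{p^{m-o'_2}+1}},
\]
where the last step expands $(1+(a-1)^{p^{m-o'_2}})^{u_2^\Ge}$ inside the commutative subalgebra of $k\Ge$ generated by $a$. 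Post-composing the canonical map $\Lambda^{j}_{N_\Ge}$ with canonical projections onto $\aug{\Ge'}^{p^{m-o'_2}}/\aug{\Ge'}^{p^{m-o'_2}+1}$ (which is identified with the image of $\Delta_\Ge$ via $\zeta^1_\Ge$, $\zeta^2_\Ge$, and $\pi_\Ge\circ\tilde\Lambda_\Ge^{m-o}$), the class of $d-1\in\mathcal{N}_\Ge$ is sent to $u_2^\Ge\bmod p$ times the canonical basis vector, and \eqref{JUsefi} ensures that the image is genuinely nonzero when $p\nmid u_2^\Ge$.

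The main obstacle is the two-dimensional nature of $\mathcal{N}_\Ge$: a priori $\tilde\psi$ mixes the classes of $d-1$ and $e-1$, so an isolation step is needed. The point is that $(d-1)^{p^j}$ and $(e-1)^{p^j}$ land at Jennings depths $p^{m-o'_2}$ and $p^{m-o'_1}$ respectively, and the gap between these depths can be used to kill the $e$-contribution in a canonical subquotient, isolating $u_2^\Ge$. This argument works in every case. When $o_1o_2=0$ a symmetric argument with the roles of the two generators reversed isolates $u_1^\Ge$, but when $o_1o_2>0$ the two filtrations may coincide, which is precisely why (2) requires the extra hypothesis. Making this filtration bookkeeping precise in each subcase of \eqref{NGG}, and checking that all exponents $j$ are indeed $\le\ell$ so that \eqref{JUsefi} applies, is the technical heart of the argument.
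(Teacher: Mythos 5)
Your overall strategy --- exploit the canonical subgroup $N_\Ge$, the two-dimensional canonical subquotient $\mathcal{N}_\Ge$, the power maps $\Lambda^{\ell}_{N_\Ge}$ and \eqref{JUsefi} --- matches the paper's, and your ``isolation step'' is essentially how the paper kills the second generator: one raises to the power $p^{\ell}$ with $\ell$ chosen so that $e^{p^{\ell}}=1$ while $d^{p^{\ell}}=a^{u_t^\Ge p^{m-1}}\ne 1$ (the inequalities $n_1+o_1'+o_2>n_2+o_2'$, resp.\ $n_2+o_2'+o_1>n_1+o_1'$, which follow from $a_t\ne 0$, make the unwanted generator die first). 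The genuine gap is in the step where you ``read off $u_2^\Ge\bmod p$ as the coefficient of a canonical basis vector.'' Neither the class of $d-1$ in $\mathcal{N}_\Ge$ nor the class of $a^{p^{m-1}}-1$ in $\aug{\Ge'}^{p^{m-1}}k\Ge/\aug{\Ge'}^{p^{m-1}}\aug{\Ge}$ is a canonical \emph{element}: an algebra isomorphism $\psi$ only induces isomorphisms of these subquotients, so $\tilde\psi$ may send $d_G-1$ to any combination $x(d_H-1)+y(e_H-1)$ and may rescale the one-dimensional target arbitrarily. Killing the $e$-component addresses only the $y$-ambiguity; afterwards the scalar you extract is still only determined up to two unknown units, one from the source and one from the target normalization, so no congruence between $u_t^G$ and $u_t^H$ follows.

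The device the paper uses to close this gap --- and which is absent from your proposal --- is to avoid choosing basis elements altogether and instead compare \emph{two canonical maps between the same pair of canonical one-dimensional spaces}. Concretely, it factors $(\Delta'_\Ge)^{-1}\circ\Lambda^{\ell}_{N_\Ge}$ through the canonical surjection $\nu_\Ge\colon\mathcal{N}_\Ge\to\mathcal{C}_\Ge$ (whose kernel is exactly the span of $e-1$), obtaining a canonical map $\mu_\Ge$ on $\mathcal{C}_\Ge$, and then equates $\mu_\Ge\circ(\tilde\Lambda^{m-o}_\Ge)^{-1}\circ\pi_\Ge^{-1}\circ\zeta^2_\Ge$ with $X\cdot\bigl(\Lambda^{p^{m-1}}_{\Ge'}\circ\Delta_\Ge^{-1}\circ\zeta^1_\Ge\bigr)$ on the central subquotient; the unique solution is $X=\delta u_t^\Ge$, and since $\delta$ depends only on the shared invariants and $p\nmid\delta$, the congruence follows. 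Some comparison of this kind is indispensable, and it, rather than the filtration bookkeeping, is the technical heart of the proof. A smaller inaccuracy: your case division is off, since when $o_1=0$ one has $a_2=0$, so $u_2^\Ge=1$ trivially and it is the generator $d=b_1^p$ that recovers $u_1^\Ge$, while when $o_2=0<o_1$ one has $a_1=0$ and $u_1^\Ge=1$ trivially; so there is exactly one nontrivial coefficient to recover in each case, not a symmetric pair.
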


\begin{proof}
Let $t\in \{1,2\}$ with $t=2$ in case $o_1o_2\ne 0$, and let $s$ be the other element of $\{1,2\}$, i.e. $\{s,t\}=\{1,2\}$.
We have to prove that $u_t^G\equiv u_t^H\mod p$.
If $a_t=0$ then $u_t^G=u_t^G=1$, so we assume that $a_t\ne 0$. 
In particular, $o'_t>0$ and $o_s>0$.	
Therefore
	$$t=\begin{cases} 
	1, & \text{if } o_1=0;\\
	2, & \text{otherwise}.
	\end{cases}$$	
So, $\ell=n_t+o'_t-2$. If $t=1$ then $n_1+o'_1+o_2>n_2+o'_2$, by condition \ref{5}, as $a_1>0$. 
If $t=2$ and $o_1'\ge o_2'$ then, by condition \ref{6}, $o'_2-o'_1\le 0<a_2\le o'_2-o'_1+\max(0,o_1+n_2-n_1)$ and hence  $n_1+o_1'< n_2+o_2'+o_1$ and $o_2=0$. 

We claim that for $x,y\in k$ 
\begin{equation}\label{Lamdaell}
\Lambda^{\ell}_{N_\GG}(x(d-1)+y(e-1)+\aug{N_\GG}\aug{\GG}) = x u_t^\GG (a^{p^{m-1}}-1)+\Aug{p^{\ell}}{N_\GG,\GG}.
\end{equation}
Indeed, if $t=1$ then $o_1=0$, $o'_1>0$, $n_1+o'_1+o_2>n_2+o'_2$, $\ell=n_1+o'_1-2$, $d=b_1^p$ and $e=b_2^{p^{o_2+1}}$. Thus 
\begin{eqnarray*}
\Lambda^{\ell}_{N_\GG}(x(d-1)+y(e-1)+\aug{N_\GG}\aug{\GG}) &=& 
x(b_1^{p^{n_1+o'_1-1}}-1) + y(b_2^{p^{n_1+o'_1+o_2-1}}-1)+\Aug{p^{\ell}}{N_\GG,\GG}	\\
&=& xu_1^\GG(a^{p^{m-1}-1}-1)+\Aug{p^{\ell}}{N_\GG,\GG}.
\end{eqnarray*}
Suppose that $t=2$. Then $o'_2>0$, $o_1>0$ and $\ell=n_2+o'_2-2$. 
If $o'_2\le o'_1$ then $o_2=0$ and $n_2+o'_2+o_1> n_1+o'_1$, and \eqref{Lamdaell} follows as in the previous case.
If $o'_2>o'_1$ then 
\begin{eqnarray*}
\Lambda^{\ell}_{N_\GG}(x(d-1)+y(e-1)+\aug{N_\GG}\aug{\GG}) &=& 
x(b_2^{p^{n_2+o'_2-1}}-1) + y(b_1^{p^{n_1+o'_2-1}}-1)+\Aug{p^{\ell}}{N_\GG,\GG}	\\
&=& xu_2^\GG(a^{p^{m-1}-1}-1)+\Aug{p^{\ell}}{N_\GG,\GG}.
\end{eqnarray*}
This finishes the proof of \eqref{Lamdaell}.
	
By \eqref{JUsefi} and \eqref{Lamdaell}, $\Lambda^{\ell}_{N_\GG}(\mathcal{N}_\GG)$ is one dimensional spanned by the class of $a^{p^{m-1}}-1$. 
Moreover, as $o'_t>0$, $a^{u_t^\GG p^{m-1}} = d^{p^\ell} \in N_\GG^{p^{\ell}}$ and hence the natural projection defines an isomorphism 
$$\Delta'_\GG:\frac{\aug{\GG'}^{p^{m-1}} k\GG}{\aug{\GG'}^{p^{m-1}}\aug{\GG}}\to
\Lambda_{N_\GG}^\ell(\mathcal{N}_\GG).$$

Using \eqref{NGG} and \eqref{ImagenLambda} it is easy to see that the natural projections 
	$$\eta_\GG:\mathcal N_\GG\to  \frac{\augNor{N_\GG}{\GG}+\aug{\GG}^{p+1}} { \augNor{\GG'}{\GG}+\aug{\GG}^{p+1}} \qand  
	\Lambda_\GG^1(\mathcal C_\GG)  \to \frac{\augNor{N_\GG}{\GG}+\aug{\GG}^{p+1}} { \augNor{\GG'}{\GG}+\aug{\GG}^{p+1}}$$
make sense, their images coincide and the second map is injective. Thus the natural projection induces an isomorphism $\Lambda_\GG^1(\mathcal C_\GG)\to \eta_\GG(\mathcal N_\GG)$. 
On the other hand, by \eqref{LambdaTildeIso}, $\tilde\Lambda^1_\GG:\mathcal{C}_\GG \to \Lambda^1_\GG(\mathcal{C}_\GG)$ is an isomorphism.
Composing these isomorphisms we obtain an isomorphism 
	$$\hat\Lambda^1_\GG:\mathcal{C}_\GG\to \Imagen(\eta_\GG), \quad w+\aug{\GG}^2\mapsto w^p+\augNor{\GG'}{\GG}+\aug{\GG}^{p+1}.$$
This provides another canonical map 
$$\nu_\GG = (\hat\Lambda^1_\GG)^{-1}\circ \eta_\GG: \mathcal{N}_\GG \to \mathcal{C}_\GG,\quad  w+\aug{N_\Gamma}\aug{\Gamma}\mapsto (\hat\Lambda^1_\GG)^{-1}(w+\augNor{\GG'}{\GG}+\aug{\GG}^{p+1}).$$

Define the linear map
	$$\mu_\GG:\mathcal{C}_\GG \to \frac{\aug{\GG'}^{p^{m-1}} k\GG}{\aug{\GG'}^{p^{m-1}}\aug{\GG}}$$
sending the class of $x(b_t-1)$ to the class of $xu_t^\GG(a^{p^{m-1}}-1)$.
	A straightforward calculation shows that the following diagram commutes. 
$$\xymatrix{
	\mathcal{N}_\GG \ar[d]_-{\nu_\GG}
	\ar[rr]^-{(\Delta'_\GG)^{-1}\circ\Lambda_{N_\GG}^{p^{\ell}}}& &
	\frac{\aug{\GG'}^{p^{m-1}} k\GG}{\aug{\GG'}^{p^{m-1}}\aug{\GG}}   \\
	\mathcal{C}_\GG \ar[rru]_-{\mu_\GG}   &   &
} $$
As the vertical map is surjective, $\mu_\GG$ is the unique map making the previous commutative. Then $\mu_\GG$ is canonical, since the other maps in the diagram are so.
	
	Consider the following equation where $X$ stands for an element of $k$.
	\begin{equation}\label{diagram1}
	X\cdot \left(\Lambda_{\GG'}^{p^{m-1}}\circ \Delta_\GG^{-1}\circ\zeta_\GG^1\right) =
	\mu_\Gamma\circ (\tilde \Lambda_{\GG }^{p^ {m-o}})^{-1}\circ\pi_\GG^{-1}\circ \zeta_\GG^2.
	\end{equation}
	Here, given a map $f$ with codomain in a vector space over $k$ and $x\in k$,  $x\cdot f$ denotes the map given by $(x\cdot f)(w)=xf(w)$, for each $w$ in the domain of $f$.
	The unique solution for equation \eqref{diagram1} is $X=\delta u_t^\GG 1_k$.
	Since all the maps involved are canonical, the solution when $\GG=G$
	coincides with the solution when $\GG=H$. Furthermore, $p\nmid \delta$ and thus $u_t^G \equiv u^H_t \bmod p$, as desired.
\end{proof}

Most of the remaining cases of \Cref{theorem2} are covered by the next lemma. 
 
\begin{lemma}\label{MIPUeso1o2>0}
If $n_1+o_1'\neq n_2+o_2'$, then $u_1^G\equiv u_1^H\mod p$.  
\end{lemma}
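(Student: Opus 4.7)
By \Cref{MIPUesCaso1}(2), the conclusion holds whenever $o_1o_2=0$, so we may assume $o_1o_2>0$, placing us in case~\ref{3}(c) of \Cref{Main}. That case forces $o'_1\le o'_2\le o'_1+n_1-n_2$, hence $n_1+o'_1\ge n_2+o'_2$; combined with the hypothesis $n_1+o'_1\ne n_2+o'_2$ this gives the strict inequality $n_1+o'_1>n_2+o'_2$, forcing $n_1>n_2$ (and in particular $n_1\ge 2$). We may further assume $o'_1>0$, since otherwise condition~\ref{5} forces $u_1^\GG=1$ and there is nothing to prove.

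The strategy is parallel to the proof of \Cref{MIPUesCaso1} but tuned to detect $u_1^\GG$ instead of $u_2^\GG$. Since $n_1>n_2$, the canonical normal subgroup $N'_\GG:=\Omega_{n_1-1}(\GG:\GG')$ equals $\langle b_1^p,b_2,a\rangle$, and by \Cref{lemma:canonicalsubgroups} the ideal $\augNor{N'_\GG}{\GG}$ is canonical. Form the canonical subquotient
\[
\mathcal N'_\GG := \frac{\augNor{N'_\GG}{\GG}\cap \aug{\GG}^p}{\aug{N'_\GG}\aug{\GG}},
\]
which, by a Jennings-basis argument analogous to the one used in \Cref{MIPUesCaso1}, is spanned by the classes of $d':=b_1^p-1$ and $e':=b_2^p-1$; intersecting with $\aug{\GG}^p$ in the numerator is precisely what kills the $b_2-1$ contribution, as $b_2\notin\GG^p=\M_p(\GG)$ implies $b_2-1\notin\aug{\GG}^p$. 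Applying the canonical map $\Lambda^{n_1-1}_{N'_\GG}$ and using $(b_i^p-1)^{p^{n_1-1}}=b_i^{p^{n_1}}-1$ in characteristic $p$, we compute
\[
\Lambda^{n_1-1}_{N'_\GG}(d') = a^{u_1^\GG p^{m-o'_1}}-1,\qquad
\Lambda^{n_1-1}_{N'_\GG}(e') = a^{u_2^\GG p^{m-o'_2+n_1-n_2}}-1,
\]
modulo $\Aug{p^{n_1-1}}{N'_\GG,\GG}$. Since $\augNor{\GG'}{\GG}=\aug{\GG'}k\GG$ is canonical (being the ideal of $k\GG$ generated by the additive commutators $xy-yx$), so are the ideals $\aug{\GG'}^n k\GG$ and $\aug{\GG'}^n\aug{\GG}$ for every $n\ge 1$; consequently the one-dimensional subquotient
\[
\mathcal T_\GG := \aug{\GG'}^{p^{m-o'_1}}k\GG \,\big/\, \bigl(\aug{\GG'}^{p^{m-o'_1}+1}k\GG+\aug{\GG'}^{p^{m-o'_1}}\aug{\GG}\bigr),
\]
spanned by the class $\tau_\GG$ of $(a-1)^{p^{m-o'_1}}$, is canonical. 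The strict inequality $n_1+o'_1>n_2+o'_2$ yields $m-o'_2+n_1-n_2>m-o'_1$, so the $e'$-image lies in $\aug{\GG'}^{p^{m-o'_1}+1}k\GG$, whereas $a^{u_1^\GG p^{m-o'_1}}-1\equiv u_1^\GG(a-1)^{p^{m-o'_1}}$ modulo $\aug{\GG'}^{p^{m-o'_1}+1}$. Composing with the canonical projection we obtain a canonical map $\phi_\GG:\mathcal N'_\GG\to\mathcal T_\GG$ satisfying $\phi_\GG(d')=u_1^\GG\,\tau_\GG$ and $\phi_\GG(e')=0$.

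To extract the scalar $u_1^\GG\bmod p$, in the spirit of~\eqref{diagram1} we set up a commutative diagram of canonical maps equating $X\cdot\phi_\GG$ against a second canonical map $\mathcal N'_\GG\to\mathcal T_\GG$ whose value on $d'$ is $\tau_\GG$ itself, independent of $u_1^\GG$---for example, the composition of a canonical identification of $d'$ with a unit multiple of $a-1$ in $\augNor{\GG'}{\GG}/\aug{\GG'}\aug{\GG}$, followed by $\Lambda^{m-o'_1}_{\GG'}$ and the canonical projection onto $\mathcal T_\GG$. The unique solution is $X=u_1^\GG\cdot 1_k$, and since every building block is canonical this scalar coincides for $G$ and $H$, yielding $u_1^G\equiv u_1^H\pmod p$. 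The principal obstacle is verifying well-definedness of the projection into $\mathcal T_\GG$, i.e.\ that the image of $\Aug{p^{n_1-1}}{N'_\GG,\GG}$ (or at least the portion of the codomain of $\Lambda^{n_1-1}_{N'_\GG}$ that appears) is contained in $\aug{\GG'}^{p^{m-o'_1}+1}k\GG+\aug{\GG'}^{p^{m-o'_1}}\aug{\GG}$; \Cref{JotasLema} does not apply verbatim because $[\GG,N'_\GG]=\GG'\not\subseteq (N'_\GG)^p$ (the generator $a$ of $\GG'$ is a commutator of elements of $\GG$ with elements of $N'_\GG$ but is not a $p$-th power in $N'_\GG$), so a direct Jennings-basis or commutator-calculus analysis within $N'_\GG$ will be required.
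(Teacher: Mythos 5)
Your reduction to the case $o_1o_2>0$, $n_1+o_1'>n_2+o_2'$, $o_1'>0$ matches the paper, and the general strategy (compare two canonical maps between one-dimensional canonical subquotients and read off their ratio) is the right one, but the concrete construction has genuine gaps. First, a computational error: since $b_2\in N'_\GG=\Omega_{n_1-1}(\GG:\GG')$, you have $e'=b_2^p-1=(b_2-1)^p\in\aug{N'_\GG}^p\subseteq\aug{N'_\GG}\aug{\GG}$, so the class of $e'$ in $\mathcal N'_\GG$ is already zero and your subquotient is at most one-dimensional; the inequality $n_1+o_1'>n_2+o_2'$, which you invoke only to kill the $e'$-image, then plays no genuine role in the argument. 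That is a red flag: the remark following the paper's proof explains that the analogous construction really does break when $n_1+o_1'=n_2+o_2'$, so any correct proof must use the hypothesis in an essential way. Second, the well-definedness of the projection onto $\mathcal T_\GG$, which you flag as "the principal obstacle," is a real problem and not a routine verification: since $\gamma_2^\GG(N'_\GG)=[\GG,N'_\GG]=\GG'$, equation \eqref{eq72} gives $\Aug{p^{n_1-1}}{N'_\GG,\GG}\supseteq\aug{N'_\GG}^{p^{n_1-1}-1}\augNor{\GG'}{\GG}$, which contains elements with only a single factor from $\aug{\GG'}$ and hence is not contained in $\aug{\GG'}^{p^{m-o_1'}+1}k\GG+\aug{\GG'}^{p^{m-o_1'}}\aug{\GG}$; this is exactly why the paper's \Cref{JnN} restricts to subgroups $N_\GG$ with $[\GG,N_\GG]\subseteq(\GG')^p$.

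The most serious gap is the last step. Knowing that a canonical map sends the class of $d'$ to $u_1^\GG$ times the class of $(a-1)^{p^{m-o_1'}}$ determines nothing by itself, because neither generator is canonically normalized: an isomorphism $kG\to kH$ may rescale each one-dimensional space arbitrarily, so only the \emph{ratio} of two canonical maps between the same spaces is an invariant. The second map you propose rests on "a canonical identification of $d'$ with a unit multiple of $a-1$," but no such identification is available a priori---constructing it is precisely the content of the lemma. The paper supplies the missing normalization by routing through the center: the central element $c=b_1^{-\delta p^{m-o_2}}b_2^{\delta p^{m-o_1}}a$ ties the $b_1$-direction to the $a$-direction with a coefficient $\delta$ determined by the shared invariants, and the two canonical maps $\mu_\GG\circ\zeta^3_\GG$ (built from the subgroup $M_\GG=\Omega_{n_2-m+o_1}(\GG:\GG')$, where the hypothesis $n_1+o_1'>n_2+o_2'$ is what makes the relevant diagram commute) and $\Lambda^{p^{m-1}}_{\GG'}\circ\Delta_\GG^{-1}\circ\zeta^1_\GG$ then have ratio $-\delta u_1^\GG$. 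None of this machinery appears in your proposal, and without it the scalar $u_1^\GG$ cannot be extracted.
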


\begin{proof} 
By \Cref{MIPUesCaso1} we may assume that $o_1o_2\ne 0$. 
Hence condition \ref{3} and the hypothesis imply $n_1+o_1'> n_2+o_2'$.
As in the proof of \Cref{MIPUesCaso1} we may assume that $a_1>0$ and hence $o_1'>0$. Consider the subgroup 
	 $$M_\GG=\Omega_{n_2-m+o_1}(\GG:\GG')=\GEN{b_1^{p^{n_1-n_2+m-o_1}}, b_2^{p^{m-o_1}}, a}.$$
Recall that $c=b_1^{-\delta p^{m-o_2}} b_2^{\delta p^{m-o_1}}a$ and $\frac{\ZZ(\aug{\GG})+ \aug{\GG}^{p^m}}{\aug{\GG}^{p^m}}$ is spanned by the classes of $c-1,(c-1)^2,\dots,(c-1)^{\frac{p^m-1}{2}}$. 
The natural projection
	 $$\zeta_\GG^3: \frac{\ZZ(\aug{\GG})+ \aug{\GG}^{p^m}}{\aug{\GG}^{p^m}} \to \frac{\ZZ(\aug{\GG})+ \aug{\GG}^{p^{m-o_2}+1}+\augNor{M_\GG}{\GG}}{\aug{\GG}^{p^{m-o_2}+1}+\augNor{M_\GG}{\GG}}$$
maps the class of $x(c-1)+ y(c-1)^2+ \dots$ to the class of $-x\delta (b_1^{p^{m-o_2}}-1)$, which is non-zero if $x\neq 0$ because $n_1-n_2+m-o_1>m-o_2$. 
So $\Imagen(\zeta_\GG^3)$ is $1$-dimensional. 

Now consider the composition 
$$\hat{\Lambda}^{m-o_2}_\GG: \frac{\aug{\GG}}{\aug{\GG}^2} \stackrel{\Lambda^{m-o_2}_\GG}{\longrightarrow} 
\frac{\aug{\GG}^{p^{m-o_2}}}{\aug{\GG}^{p^{m-o_2}+1}} \longrightarrow 
\frac{\aug{\GG}^{p^{m-o_2}}+\augNor{M_\GG}{\GG}}{\aug{\GG}^{p^{m-o_2}+1}+\augNor{M_\GG}{\GG}}$$
where the second map is the natural projection. 
It maps $x(b_1-1)+y(b_2-1)$ to $x(b_1^{p^{m-o_2}}-1)$, so $\Imagen(\hat{\Lambda}^{m-o_1}_\GG)=\Imagen(\zeta^3_\GG)$. 

The image of $\Lambda_\GG^{n_1+o'_1-1}$ is the subspace of $\aug{\GG}^{p^{n_1+o'_1-1}}/\aug{\GG}^{p^{n_1+o'_1-1}+1}$ spanned by the class of $a^{p^{m-1}}-1$. It coincides with the image of the natural projection 
$$\frac{\aug{\GG'}^{p^{m-1}} k\GG}{\aug{\GG'}^{p^{m-1}}\aug{\GG}} \to \frac{\aug{\GG}^{p^{n_1+o'_1-1}}}{\aug{\GG}^{p^{n_1+o'_1-1}+1}}.$$
Thus this natural projection yields an isomorphism $\tilde\Delta_\GG:\frac{\aug{\GG'}^{p^{m-1}} k\GG}{\aug{\GG'}^{p^{m-1}}\aug{\GG}}\to \Imagen(\Lambda_\GG^{n_1+o'_1-1})$.

Let $\mu_\GG:\Imagen(\zeta^3_\GG)\to \frac{\aug{\GG'}^{p^{m-1}} k\GG}{\aug{\GG'}^{p^{m-1}}\aug{\GG}}$ be the map that sends the class of $x (b_1^{p^{m-o_1}}-1)$ to the class of $xu_1(a^{p^{m-1}}-1)$.
Then it is easy to see that the following diagram commutes
$$  \xymatrix{ 
 	\frac{\aug{\GG}}{\aug{\GG}^2} 
 	\ar[rr]^-{  \tilde \Delta_\GG^{-1}\circ\Lambda_{\GG }^{ n_1+o_1'-1 }}   \ar[d]_-{\hat{\Lambda}^{m-o_2}_\GG}& &  
 	\frac{\aug{\GG'}^{p^{m-1}} k\GG}{\aug{\GG'}^{p^{m-1}}\aug{\GG}}   \\
 	\Imagen( \zeta_\GG^3) \ar[rru]_-{  \mu_\GG}   & &
 }$$
As the vertical map is surjective, $\mu_\GG$ is the unique map making the previous commutative, so $\mu_\GG$ is canonical. 
Then $-\delta u_1^{\Gamma} 1_k$ is the unique solution of the equation 
$$X \cdot (\Lambda_{\GG'}^{p^{m-1}} \circ \Delta_\GG^{-1}\circ\zeta_\GG^1)= \mu_\GG\circ \zeta_\GG^3.$$
Arguing as at the end of the proof of \Cref{MIPUesCaso1} we conclude that $u_1^G\equiv u_1^H\mod p$.
\end{proof}

The proof of \Cref{MIPUeso1o2>0} fails if $n_1+o_1'=n_2+o_2'$, because in that case  $\ker(\hat{\Lambda}^{m-o_2}_\GG)\not\subseteq \ker(\Delta_\GG^{-1}\circ \Lambda_{\GG}^{n_1+o_1'-1})$, and hence there is no map $\mu_\GG$ such that 
$\mu_\GG \circ \hat{\Lambda}^{m-o_2}_\GG = \Delta_\GG^{-1}\circ \Lambda_{\GG}^{n_1+o_1'-1}$. 
However, some special subcases can  be handled with slight modifications of the previous arguments.

For a non-negative integer $n$ define the map 
\begin{eqnarray*}
\Upsilon^n_\GG:\frac{\ZZ(\aug{\GG})+\aug{\GG}^{p^m}}{\aug{\GG}^{p^m}} & \longrightarrow &
\frac{\ZZ(\aug{\GG})+\aug{\GG}^{p^{n+m}}+\aug{\GG'}^{p^{m-1}}\aug{\GG}}{\aug{\GG}^{p^{n+m}}+\aug{\GG'}^{p^{m-1}}\aug{\GG}} \\
w+\aug{\GG}^{p^m} & \mapsto & w^{p^n}+\aug{\GG}^{p^{n+m}}+\aug{\GG'}^{p^{m-1}}\aug{\GG}.	
\end{eqnarray*} 
It is well defined because the elements of $\ZZ(\aug{\GG})$ are central.
 
 \begin{lemma}\label{MIPUesSpecial}
If $o_1o_2>0$, $n_1+o_1'=n_2+o_2'=2m-o_1$ and $u_2^G\equiv u_2^H \equiv 1 \mod p^{o_1+1-o_2}$, then $u_1^G\equiv u_1^H \mod p$.
 \end{lemma}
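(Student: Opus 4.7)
The approach mirrors the proofs of \Cref{MIPUesCaso1,MIPUeso1o2>0}: build a canonical linear equation whose unique solution in $k$ encodes $u_1^\GG\bmod p$, and then invoke canonicity under $\psi$. The new ingredient is the map $\Upsilon^n_\GG$ defined immediately before the statement, applied with $n=m-o_1-1$. This exponent is forced by the equalities $n_1+o'_1=n_2+o'_2=2m-o_1$: they give $p^{n+m}=p^{2m-o_1-1}=p^{n_i+o'_i-1}$, the Jennings level at which the relations $b_i^{p^{n_i+o'_i-1}}=a^{u_i^\GG p^{m-1}}$ become detectable. Because $c\in\ZZ(\GG)$, the subring $k\GEN{c}$ is commutative, so $(c-1)^{p^{m-o_1-1}}=c^{p^{m-o_1-1}}-1$ exactly in $k\GG$; hence $\Upsilon^{m-o_1-1}_\GG$ sends the class of $c-1$ to the class of $c^{p^{m-o_1-1}}-1$ modulo $\aug{\GG}^{p^{2m-o_1-1}}+\aug{\GG'}^{p^{m-1}}\aug{\GG}$.

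The heart of the argument is the evaluation of $c^{p^{m-o_1-1}}$ as an element of $\GG$ modulo $\M_{p^{2m-o_1-1}+1}(\GG)$. Expanding $c=b_1^{-\delta p^{m-o_2}}b_2^{\delta p^{m-o_1}}a$ via the Hall--Petrescu collection formula (the nilpotency class of $\GG$ is at most $3$ by \eqref{gamma}) and reducing intermediate powers of $b_1,b_2$ through $b_i^{p^{n_i}}=a^{u_i^\GG p^{m-o'_i}}$, one expects an identity of the form
\[
c^{p^{m-o_1-1}}\;\equiv\; a^{F(u_1^\GG,u_2^\GG,\delta)\,p^{m-1}}\pmod{\M_{p^{2m-o_1-1}+1}(\GG)},
\]
where $F$ is $\Z$-linear in $u_1^\GG,u_2^\GG$ with coefficients depending only on $m,o_1,o_2,\delta$. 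A careful $p$-adic bookkeeping of the commutator corrections---this is the technical crux---should show that the coefficient of $u_2^\GG-1$ in $F$ is divisible by $p^{o_1-o_2}$, so the hypothesis $u_2^\GG\equiv 1\pmod{p^{o_1+1-o_2}}$ kills the $u_2^\GG$-dependence of $F$ modulo $p$ and leaves $F\equiv \alpha u_1^\GG+\beta\pmod p$ for canonical integers $\alpha,\beta$ with $p\nmid\alpha$.

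With this computation in hand, one assembles a canonical equation analogous to \eqref{diagram1}, combining $\Upsilon^{m-o_1-1}_\GG$ with the maps $\zeta^1_\GG$, $\Delta_\GG^{-1}$, $\Lambda_{\GG'}^{p^{m-1}}$ of \Cref{MIPUesCaso1} and the isomorphism $(\Delta'_\GG)^{-1}$ (produced there) onto the one-dimensional subspace spanned by the class of $a^{p^{m-1}}-1$. The unique solution for the resulting scalar equation is $X=\alpha u_1^\GG 1_k+\beta 1_k$; since every map and constant involved is canonical in the sense of \Cref{lemma:canonicalsubgroups,lemma:canonicalsubgroups2}, the analogous equation for $H$ has solution $\alpha u_1^H 1_k+\beta 1_k$, whence $u_1^G\equiv u_1^H\pmod p$.

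The main obstacle is exactly the Hall--Petrescu-type expansion of $c^{p^{m-o_1-1}}$ modulo $\M_{p^{2m-o_1-1}+1}(\GG)$ and the subsequent verification that the $u_2^\GG$-contribution to $F$ carries a factor of $p^{o_1-o_2}$ while the $u_1^\GG$-coefficient is a $p$-adic unit. The centering constant $\delta$ of \eqref{eq:CenteredCongruence} enters essentially via the exponents appearing in $c$, and the interplay between $\delta$, $o_1$, $o_2$, and the valuations produced by the collection process is the delicate part of the calculation; one should expect that the exponent $o_1+1-o_2$ in the hypothesis is tuned precisely so that the $u_2$-terms are absorbed into the target ideal $\aug{\GG'}^{p^{m-1}}\aug{\GG}$.
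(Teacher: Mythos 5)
Your overall strategy matches the paper's: apply the map $\Upsilon^n_\GG$ to the class of $c-1$, evaluate a $p$-power of the central element $c$ as a power of $a^{p^{m-1}}$, and package the result as a canonical scalar equation against $\Lambda_{\GG'}^{m-1}\circ\Delta_\GG^{-1}\circ\zeta^1_\GG$. However, the one concrete choice you commit to is wrong, and the step you defer is the entire content of the lemma. The paper uses the exponent $n=n_1+o_1'+o_2-m-1=m+o_2-o_1-1$, not $m-o_1-1$. This matters: writing $c=b_1^{-\delta p^{m-o_2}}b_2^{\delta p^{m-o_1}}a$, the power $c^{p^{m-o_1-1}}$ involves $b_1^{-\delta p^{2m-o_1-o_2-1}}$ and $b_2^{\delta p^{2m-2o_1-1}}$, and since in general $2m-o_1-o_2-1<n_1$ and $2m-2o_1-1<n_2$, these factors cannot be rewritten as powers of $a$ via the defining relations, nor do they die in the target quotient; so with your exponent the image is not a multiple of the class of $a^{p^{m-1}}-1$ and the scalar equation does not exist. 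The extra factor $p^{o_2}$ is exactly what is needed so that, after first computing $c^{p^{m-o_2'}}$ inside the regular subgroup $\GEN{b_1^{p^{m-o_2}},b_2^{p^{m-o_1}},a}$ (whose derived subgroup is $\GEN{a^{p^{2m-o_1-o_2}}}$), the $b_2$-power reaches level $p^{n_2}$ and the $b_1$-power reaches level $p^{n_1+o_1'-1}$.

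The second gap is that the ``$p$-adic bookkeeping'' you postpone is where both hypotheses enter, and you do not identify the mechanism. The computation yields $c^{p^{m+o_2-o_1-1}}=a^{-\delta u_1^\GG p^{m-1}}\cdot a^{(\delta u_2^\GG+1)p^{m+o_2-o_1-1}}$, and the second factor is trivial because $\Ese{r_2}{\delta p^{m-o_1}}\equiv\delta p^{m-o_1}\bmod p^{m+1-o_2}$ forces $\delta\equiv-1\bmod p^{o_1+1-o_2}$ via \eqref{eq:CenteredCongruence}, which combined with $u_2^\GG\equiv 1\bmod p^{o_1+1-o_2}$ gives $(\delta u_2^\GG+1)p^{m+o_2-o_1-1}\equiv 0\bmod p^m$. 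Your guess that the $u_2$-coefficient is divisible by $p^{o_1-o_2}$ is in the right spirit but is not a proof, and your justification that one can use Hall--Petrescu because ``the nilpotency class of $\GG$ is at most $3$'' is false: by \eqref{gamma} the class is roughly $2+m/(m-\max(o_1,o_2))$ and can be large. Finally, the condition $n_1+o_1'=2m-o_1$ (rather than merely $n_1+o_1'=n_2+o_2'$) is needed to guarantee the exponent $n$ is a nonnegative integer and that the resulting class of $a^{p^{m-1}}-1$ is nonzero in the codomain (via \Cref{UsefiFrattini}); your write-up does not use this hypothesis anywhere.
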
 

\begin{proof}
As in previous proofs we may assume that $a_1\ne 0$ and hence $0<o'_1$.
As $o_1o_2>0$ implies $n_1>n_2$, necessarily $1\le o_1'<o_2'$.
Recall that
	$ \ZZ(\GG)=\GEN{b_1^{p^m},b_2^{p^m},c}   $,	where  $c =b_1^{-\delta p^{m-o_2}}b_2^{\delta p^{m-o_1}}a$.

We claim that
\begin{equation}\label{Congruenciadelta}
(\delta u_2^\GG+1)p^{m+o_2-o_1-1} \equiv 0 \bmod p^m.
\end{equation}
To prove this, it suffices to show that $\delta\equiv -1 \mod p^{o_1+1-o_2}$.
As $v_p(r_2-1)=m-o_2$, $m-o_1\ge 1 = m+1-o_2-v_p(r_2)$. 
Hence \cite[Lemma~A.2]{OsnelDiegoAngel} yields  $\Ese{r_2}{\delta p^{m-o_1}}\equiv \delta p^{m-o_1} \mod p^{m+1-o_2}$. Thus \eqref{eq:CenteredCongruence} implies that $\delta \equiv -1 \mod p^{o_1+1-o_2}$. This proves \eqref{Congruenciadelta}.
	
Next we claim that
\begin{equation}\label{powerofc}
c^{p^{ n_1+o_1'-1 -m+o_2}}=a^{-\delta u_1^\GG p^{m-1}}.
\end{equation}
Indeed, first observe that condition  implies
	\begin{equation}\label{Desigualdad}
	n_1+o_1'-1=2m-o_1-1 \ge 2m -o_2 +n_2-n_1 = 2m-o_2-o_2'+o_1' \geq 2m-o_2-o_2' \ge
		m-o_2.
	\end{equation}
Thus the exponent in the left side of \eqref{powerofc} is a positive integer.
Observe that
	$$\GEN{b_1^{p^{m-o_2}}, b_2^{p^{m-o_1}},a} $$
is a regular group with derived subgroup  $\GEN{a^{p^{2m-o_1-o_2}}}$.
As $m-o_2'+2m-o_1-o_2= 3m-o_1-o_2-o_2'\geq 2m -o_1>m $ (since $o_2+o_2'\leq m$), we derive that
	$$c^{p^{m-o_2'}}= b_1^{-\delta p^{2m-o_2-o_2'}} b_2^{\delta p^{2m-o_1-o_2'}} a^{p^{m-o_2'}}= b_1^{-\delta p^{2m-o_2-o_2'}} b_2^{\delta p^{n_2}} a^{p^{m-o_2'}}=
	b_1^{-\delta p^{2m-o_2-o_2'}} a^{(\delta  u_2^\GG +1)p^{m-o_2'}}.$$
As $b_1^{p^{2m-o_2-o_2'}}\in \ZZ(\GG)$ and recalling \eqref{Desigualdad} we get
\begin{align*}
c^{p^{ n_1+o_1'-1 -m+o_2}} &=	(c^{p^{m-o_2'}})^{p^{n_1+o_1'-1-(2m-o_2-o_2')}}= b_1^{-\delta p^{n_1+o_1'-1}}  a^{ (\delta u_2^\GG+1) p^{n_1+o_1'-1-m+o_2}} \\
	&=a^{-\delta u_1^\GG p^{m-1}}a^{(\delta u_2^\GG+1) p^{m+o_2-o_1-1}} =
	a^{-\delta u_1^\GG p^{m-1}},
\end{align*}
where the last equality follows from \eqref{Congruenciadelta}. This proves \eqref{powerofc}.

Using \eqref{powerofc} we obtain that $\Upsilon^{n_1+o'_1+o_2-m-1}_\GG$ maps the class of $\sum_{i=1}^{\frac{p-1}{2}} x_i(c-1)^i$, with $x_i\in k$, to the class of $-x_1\delta u_1^\GG (a^{p^{m-1}}-1)$. If $x_1\ne 0$, then the latter is not the class zero, by \Cref{UsefiFrattini}. Then the natural projection defines an isomorphism
$\pi_\GG:\frac{\aug{\GG'}^{p^{m-1}} k\GG }{\aug{\GG'}^{p^{m-1}}\aug{\GG}}\to \Imagen(\Upsilon^{n_1+o'_1+o_2-m-1}_\GG)$.
So we have a canonical map
	$$\pi_\GG^{-1}\circ \Upsilon^{n_1+o'_1+o_2-m-1}_\GG:\frac{\ZZ(\aug{\GG})+\aug{\GG}^{p^m}}{\aug{\GG}^{p^m}} \to \frac{\aug{\GG'}^{p^{m-1}} k\GG }{\aug{\GG'}^{p^{m-1}}\aug{\GG}},
	$$
mapping the class of $\sum_{i=1}^{\frac{p-1}{2}} x_i(c-1)^i$ to the class of $x_1(-\delta u_1^\GG) (a^{p^{m-1}}-1)$. But we also have the canonical map
	$$\Lambda_{\GG'}^{m-1} \circ \Delta_\GG^{-1}\circ \zeta_\GG^1: \frac{\ZZ(\aug{\GG})+\aug{\GG}^{p^m}}{\aug{\GG}^{p^m}} \to \frac{\aug{\GG'}^{p^{m-1}}k\GG}{\aug{\GG'}^{p^{m-1}}\aug{\GG}}$$
that maps the class of $\sum_{i=1}^{\frac{p-1}{2}} x_i(c-1)^i$ to the class of $x_1(a^{p^{m-1}}-1)$.
Thus the unique element $x\in k$ such that $\pi_\GG^{-1}\circ \Upsilon^{n_1+o'_1+o_2-m-1}_\GG=x\cdot (\Lambda_{\GG'}^{m-1} \circ \Delta_\GG^{-1}\circ \zeta_\GG^1)$
is $-\delta u_1^\Gamma 1_k$. Since all the maps are canonical, this has to be the same for $\GG=G$ and $\GG=H$. Hence $u_1^G\equiv u_1^H \mod p$.
\end{proof}

\Cref{theorem2} follows at once from Lemmas~\ref{MIPUesCaso1}, \ref{MIPUeso1o2>0} and \ref{MIPUesSpecial}.

\subsection{Proof of \Cref{theorem1}}\label{SubsectionTheoremA}

Since $\psi(\augNor{G'}{G})=\augNor{H'}{H}$, we have that $\psi(\augNor{(G')^{p^n}}{G})=\augNor{(H')^{p^n}}{H}$ for each $n\geq 1$. Hence $\psi$ induces isomorphims $\psi_n: k(G/(G')^{p^n})\to k(H/(H')^{p^n})$.

\medskip
	
We first proof \Cref{theorem1}\eqref{theorem1.1}. 
By \eqref{gamma}, $\gamma_3(G)=(G')^{p^{m-\max(o_1,o_2)}}$. 
Hence, $\psi_{m-\max(o_1,o_2)+1}$ is an isomorphism $k(G/\gamma_3(G)^p)\cong k(H/\gamma_3(H)^p)$. Hence we can assume that $|\gamma_3(G)|=|\gamma_3(H)|=p$, so necessarily $\max(o_1,o_2)=1$. 
This means that $\{o_1,o_2\}=\{0,1\}$, by condition \ref{3}.
Thus  $a_1\leq o_2$ and $a_2\leq o_1$.
Then $1\leq u_i^\GG <p$ for $i\in\{1,2\}$ and $\GG\in \{G,H\}$ by conditions \ref{5} and \ref{6}. Therefore  $u_1^G=u_1^H$ and $u_2^G=u_2^H$ by \Cref{theorem2}, and the result follows.
This proves \Cref{theorem1}\eqref{theorem1.1}.

\medskip

To prove \Cref{theorem1}\eqref{theorem1.2} we need one more result,  which allows us to recover $u_i^\GG$ modulo a higher power of $p$ in very special situations (see \Cref{MIPUesHigherPower}). For that, we define
	$$q^\GG=\min\{n\geq 0 : \Omega_1(\GG') \cap \M_{p^n}(\GG)=1\}.$$
We claim that 
\begin{equation}\label{lemma:descriptionq}
	q^\GG=\begin{cases}
	m, & \text{if } o'_1=o'_2=0; \\
	n_2+o_2',& \text{if }0=o_1'<o'_2; \\
\max(n_1+o_1',n_2+o_2'), &\text{if } o'_1>0. 
	\end{cases}
\end{equation}
Indeed, first recall that $n_1\geq m $ by condition \ref{4}. Moreover, $n_2+o_2'\geq m$, since otherwise, by the same condition, $n_2=2m-o_1-o_2'$, so $m< 2m-o_1 =n_2+o_2'\leq m$, a contradiction. Clearly, $m\le q^\GG$, since $1\ne a^{p^{m-1}}\in \Omega_1(\GG')\cap \M_{p^{m-1}}(\GG)$. Moreover, using regularity and \eqref{Mpn} we derive that if $n\ge m$, then $\M_{p^n}(\GG)=\GEN{b_1^{p^n},b_2^{p^n}}$. 
If $o'_1=o'_2=0$, then $\M_{p^m}(\GG)\cap \Omega_1(\GG')=1$, so $q^\GG=m$. 	
Suppose that $0=o_1'<o_2'$. Then $a^{p^{m-1}}\in \M_{p^{n_2+o_2'-1}}(\GG)$, but $\M_{p^{n_2+o_2'}}(\GG)=\GEN{b_1^{p^{n_2+o_2'}}}$, which does not intersect with $\GG'$.
Thus $q^\GG=n_2+o'_2$.
Finally suppose that $o_1'>0$.  Then $a^{p^{m-1}}\in \M_{p^{\max(n_1+o_1',n_2+o_2')-1}}(\GG)$ because if $n_2+o_2'> n_1+o_1'$ then $o_2'>0$ since $n_1\ge n_2$.
As $\M_{p^{\max(n_1+o_1',n_2+o_2')}}(\GG)=1$, we conclude that $q^\Gamma=\max(n_1+o'_1,n_2+o'_2)$.
This finishes the proof of \eqref{lemma:descriptionq}.

\begin{lemma}\label{MIPUesHigherPower} Let $t$ be a positive integer such that $t\leq 2m-1-q^G$. 
	\begin{enumerate}
		\item \label{MIPUesHigherPower1}
		Suppose that $o_1=0$ and $n_1=2m-o_2-o_1'$. 
		If $u_1^G\equiv u_1^H \equiv -1 \mod p^{t}$, then  $u_1^G\equiv u_1^H \mod p^{t +1}$.
		\item \label{MIPUesHigherPower2}
		
		Suppose that $o_2=0$ and $n_2=2m-o_1-o_2'$. If $u_2^G\equiv u_2^H \equiv 1 \mod p^{t}$, then  $u_2^G\equiv u_2^H\mod p^{t+1}$.
	\end{enumerate} 
\end{lemma}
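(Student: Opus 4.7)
The proof will follow the template of \Cref{MIPUesSpecial}: construct a canonical diagram of $k$-linear maps between one-dimensional subquotients of $k\GG$ whose unique scaling parameter encodes the residue of $u_i^\GG$ modulo $p^{t+1}$, and then invoke canonicality of all the maps in the diagram to conclude. The two parts are analogous, with the roles of $b_1$ and $b_2$ exchanged. The different sign appearing in the hypotheses ($-1$ in (1) versus $+1$ in (2)) is forced by the form of the central generator $c$ from \eqref{eq:centerGens}: when $o_1 = 0$, condition \eqref{eq:CenteredCongruence} yields $\delta = 1$ and $c = b_1^{p^{m - o_2}} a$, so a computation in the abelian subgroup $\GEN{a, b_1}$ gives
\[
c^{p^{m - o'_1}} = b_1^{p^{2m - o_2 - o'_1}} a^{p^{m - o'_1}} = b_1^{p^{n_1}} a^{p^{m - o'_1}} = a^{(u_1^\GG + 1) p^{m - o'_1}};
\]
whereas when $o_2 = 0 < o_1$, the analogue (patterned on \eqref{powerofc}) produces $a^{(\delta u_2^\GG + 1) p^{m - o'_2}}$ multiplied by an extra power of $b_1$ landing in a finer Jennings layer. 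I sketch part (1) in detail.

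Writing $u_1^\GG + 1 = p^t w^\GG$, the goal reduces to $w^G \equiv w^H \bmod p$. The identity above becomes $c^{p^{m - o'_1}} = a^{w^\GG p^{m - o'_1 + t}}$, and the bound $t \le 2m - 1 - q^G$ together with the description \eqref{lemma:descriptionq} of $q^\GG$ guarantees that $a^{p^{m - o'_1 + t}}$ is a non-trivial element of $\Omega_1(\GG')$ still visible in the relevant quotient; in particular the target subquotient $\aug{\GG'}^{p^{m - o'_1 + t}} k\GG / (\aug{\GG'}^{p^{m - o'_1 + t}}\aug{\GG})$ is one-dimensional, spanned by the class of $a^{p^{m - o'_1 + t}} - 1$. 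Substituting $\aug{\GG'}^{p^{m - o'_1 + t}}$ for $\aug{\GG'}^{p^{m - 1}}$ and the power map $\Upsilon^{m - o'_1}_\GG$ for the one used in \Cref{MIPUesSpecial} throughout the diagram at the end of that proof, one obtains an equation of the same shape as there whose unique solution $X \in k$ is now a non-zero scalar multiple of $w^\GG \bmod p$; canonicality then forces $w^G \equiv w^H \bmod p$.

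Part (2) proceeds along identical lines, except that the computation of $c^{p^{m - o'_2}}$ is more delicate because $b_1$ does not commute with $a$: one picks up a factor $b_1^{-\delta p^{2m - o'_2}}$. The main technical obstacle is to show that this extra factor either vanishes modulo the appropriate Jennings layer or contributes only via quantities already known to coincide for $G$ and $H$ — in particular $u_1^\GG \bmod p$, which is provided by \Cref{MIPUesCaso1} — so that it drops out of the comparison. This step will require sharpening the congruence \eqref{eq:CenteredCongruence} via \cite[Lemma~A.2]{OsnelDiegoAngel} to pin down $\delta$ to higher $p$-adic precision. A secondary issue, common to both parts, is checking that the target subquotient is exactly one-dimensional throughout the full range $t \le 2m - 1 - q^G$; this is where the explicit description \eqref{lemma:descriptionq} of $q^\GG$ plays its essential role.
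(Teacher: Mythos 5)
Your strategy is the paper's: for part~(1), $o_1=0$ forces $\delta=1$ and $[b_1,a]=1$, so the power $c^{p^{m-o_1'}}=a^{(u_1^\GG+1)p^{m-o_1'}}$ extracts $u_1^\GG+1$, and one compares two canonical maps defined on $\ZZ(\aug{\GG})$ modulo $\aug{\GG}^{p^m}$ whose ratio is the new digit of $u_1^\GG$. Two things are missing even here. First, you must reduce to $t<a_1$ (hence $t<o_1'$): otherwise $u_1^\GG\equiv -1\bmod p^t$ together with $1\le u_1^\GG\le p^{a_1}$ already determines $u_1^\GG$, and without $t<o_1'$ the exponent $m-o_1'+t$ need not be smaller than $m$, so your target class can vanish for trivial reasons. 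Second, you stop the power map at $\Upsilon^{m-o_1'}_\GG$ and land on $a^{p^{m-o_1'+t}}$, whereas the paper raises further to $c^{p^{m-t-1}}=a^{v_1^\GG p^{m-1}}$, landing in $\Omega_1(\GG')$; with that choice the required non-vanishing is, via \Cref{UsefiFrattini}, literally the statement $a^{p^{m-1}}\notin \M_{p^{2m-t-1}}(\GG)$, i.e.\ $2m-t-1\ge q^\GG$, which is exactly the hypothesis on $t$. With your choice you must instead verify $a^{p^{m-o_1'+t}}\notin \GG^{p^{2m-o_1'}}(\GG')^{p^{m-o_1'+t+1}}$, which does follow from $t\le 2m-1-q^G$ but only after computing the $a$-components of $b_1^{p^{2m-o_1'}}$ and $b_2^{p^{2m-o_1'}}$; you flag this as ``a secondary issue'' and never do it, yet it is the only place the hypothesis on $t$ enters.

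The genuine gap is in part~(2). You predict a ``more delicate'' computation because $b_1$ does not commute with $a$, propose to neutralize an extra factor of $b_1$ by sharpening \eqref{eq:CenteredCongruence} to higher $p$-adic precision and by importing $u_1^\GG\bmod p$ from \Cref{MIPUesCaso1}, and then leave what you yourself call ``the main technical obstacle'' unresolved. In fact no such obstacle exists: since $o_2=0$, the offending factor in $c$ is $b_1^{-\delta p^{m-o_2}}=b_1^{-\delta p^{m}}$, a power of the listed central generator $b_1^{p^m}$, so it can be absorbed into the generating set of $\ZZ(\GG)$ without changing $c-1$ modulo $\aug{\GG}^{p^m}$, i.e.\ without affecting the subquotient in \eqref{eq:killcc}. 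Moreover $o_2=0$ means $r_2\equiv 1\bmod p^m$, so $b_2$ \emph{does} commute with $a$, and the only fact needed about $\delta$ is $\delta\equiv -1\bmod p^{o_1}$, which is immediate from \eqref{eq:CenteredCongruence} because $\Ese{r_2}{\delta p^{m-o_1}}\equiv \delta p^{m-o_1}\bmod p^m$. With these observations part~(2) becomes strictly symmetric to part~(1), with $c=b_2^{-p^{m-o_1}}a$ and $c^{p^{m-t-1}}=a^{-v_2^\GG p^{m-1}}$. As written, your part~(2) identifies a non-existent difficulty, proposes machinery that is not needed, and does not complete the argument.
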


\begin{proof}
Suppose first that the hypotheses of \eqref{MIPUesHigherPower1} hold.  
If $a_1\le t$ then $u_1^G=u_2^H=-1+p^{a_1}$. Thus we may assume that $t<a_1$ and in  particular $t<o'_1$. Then $q^\GG=\max(n_1+o'_1,n_2+o'_2)$. 
Write $u_1^\GG =-1+ v_1^\GG p^{t}$.  
Recall that $\ZZ(\GG)= \GEN{b_1^{p^m}, b_2^{p^m}, c= b_1^{p^{m-o_2}}a}$, by \eqref{eq:centerGens}.
As $o_1=0$, $[b_1,a]=1$ and  hence
	$$(b_1^{p^{m-o_2}}a)^{p^{m-o_1'}}=b_1^{p^{n_1}}a^{p^{m-o_1'}}=a^{(u_1^\GG+1) p^{m-o_1'}}=a^{v_1^\GG p^{m-o_1'+t}}.$$
Therefore
	$$(b_1^{p^{m-o_2}}a)^{p^{m-t-1}}=((b_1^{ p^{m-o_2}}a)^{p^{m-o_1'}})^{p^{o_1'-t-1}}= a^{v_1^\GG p^{m-1}}. $$
Then $\Upsilon^{m-t-1}_\GG$ maps the class of $x(c-1)+y(c-1)^2+ \dots  $ to the class of $x v_1^\GG  (a^{p^{m-1}}-1) $. 
Observe that $a^{p^{m-1}}\not\in \M_{2m-t-1}(\GG)$ since $2m-t-1\geq q^\GG$. 
Hence $(a^{p^{m-1}}-1)\not\in \aug{\GG}^{p^{2m-t-1}}+\aug{\GG'}^{p^{m-1}}\aug{\GG}$, by  \Cref{UsefiFrattini}.  
Thus $\Imagen(\Upsilon^{m-t-1}_\GG)$ has dimension $1$, and the natural projection
	$$\omega_\GG: \frac{\aug{\GG'}^{p^{m-1}} k\GG}{\aug{\GG'}^{p^{m-1}}\aug{\GG}} \to \Imagen(\Upsilon^{m-t-1}_\GG)$$ is an isomorphism. If $x\in k$, then
	$$ (\omega_\GG)^{-1}\circ \Upsilon^{m-t-1}_\GG = x\cdot (\Lambda_{\GG'}^{m-1} \circ  \Delta_\GG^{-1}\circ  \zeta_\GG^1)$$
	if and only if $x=  v_1^\GG\cdot 1_k $.
	As this holds both for $\GG=G$ and for $\GG=H$ and all the maps are canonical, we conclude that $v_1^G\equiv v_1^H \mod p$, so $u_1^G\equiv u_1^H\mod p^{t+1}$.
This finishes the proof of \eqref{MIPUesHigherPower1}.	 
	
Under the assumptions of \eqref{MIPUesHigherPower2}, the congruence in \eqref{eq:CenteredCongruence} yields $\delta\equiv -1 \mod p^{o_1}$,   and hence 
	$\ZZ(\GG)=\GEN{b_1^{p^m}, b_2^{p^m}, c=b_2^{-p^{m-o_1}} a}$.
Then setting $u_2^\GG=1+v_2^\GG p^t$ and arguing as above we obtain
	$(b_2^{-p^{m-o_1}}a)^{p^{m-t^\GG-1}}= a^{-v_2^\GG p^{m-1}}$. 
The rest of the proof is completely analogous to the previous case. 
\end{proof}
 
 Observe that \Cref{GG'p2} is equivalent to the following lemma.

\begin{lemma}\label{lemma:n2}
If $n_2\leq 2$, then $G\cong H$.
\end{lemma}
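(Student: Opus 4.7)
Since $n_2 \geq 2$ is part of the standing assumptions of the section, the hypothesis $n_2 \leq 2$ forces $n_2 = 2$. Because $G \cong H$ if and only if $\inv(G) = \inv(H)$, and the first eight entries of these tuples already coincide, the task reduces to showing $u_i^G = u_i^H$ for $i = 1, 2$. The plan is to combine \Cref{theorem2} (which supplies mod-$p$ congruences) with iterated applications of \Cref{MIPUesHigherPower} (which lifts them to congruences modulo higher powers of $p$), after a case analysis on the pair $(o_1, o_2)$.

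By condition~\ref{2}, $o_2 \in \{0, 1\}$; by non-metacyclicity $(o_1, o_2) \neq (0, 0)$; and $o_1 \neq o_2$. So there are three sub-cases: (i)~$0 < o_1$ and $o_2 = 0$; (ii)~$o_1 = 0$ and $o_2 = 1$; (iii)~$o_1 > 1 = o_2$. Substituting $n_2 = 2$ into the formulae of conditions~\ref{5} and~\ref{6}, and using the bound $o'_2 \leq o'_1 + n_1 - 2$ from condition~\ref{3}, a direct computation yields $a_1 = 0$ in (i) (whence $u_1^G = u_1^H = 1$), and $a_1 \leq 1$ in (ii) and (iii); moreover $a_2 = 0$ in (ii). Thus in sub-cases (ii) and (iii), the mod-$p$ congruence supplied by \Cref{theorem2} together with the range $1 \leq u_1^\GG \leq p^{a_1} \leq p$ forces $u_1^G = u_1^H$. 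One must also check that the exceptional alternative~(2) of \Cref{theorem2} creates no obstruction here: in type~(III)(c) the equality $n_1 + o'_1 = n_2 + o'_2$, combined with $n_1 > n_2 + o_1 - o_2 \geq 3$, forces the upper bound $o'_2 = o'_1 + n_1 - 2$ of condition~\ref{3} to be attained, and then inspection of condition~\ref{6} still uniquely pins down $u_1$ in $\{1, \ldots, p^{a_1}\}$.

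It remains to establish $u_2^G = u_2^H$ in sub-cases (i) and (iii), where $a_2$ can be arbitrarily large. In sub-case (i), the hypothesis $o_2 = 0$ of \Cref{MIPUesHigherPower}(2) holds. If condition~\ref{4}(a) applies, then $m \leq n_2 = 2$ and a direct inspection gives $a_2 \leq 1$, so \Cref{theorem2} alone concludes. Otherwise we are in condition~\ref{4}(b), which supplies both the equation $n_2 = 2m - o_1 - o'_2$ required by \Cref{MIPUesHigherPower}(2) and the initial congruence $u_2^\GG \equiv 1 \pmod{p^{m-2}}$. Starting from $t = m - 2$, iterated application of \Cref{MIPUesHigherPower}(2) lifts $u_2^G \equiv u_2^H \pmod{p^{t+1}}$ one power of $p$ at a time; the hypothesis $t \leq 2m - 1 - q^\GG$ holds throughout because \eqref{lemma:descriptionq} yields $q^\GG \geq 2m - o_1$ while condition~\ref{6} gives $a_2 \leq o_1$. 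After finitely many iterations, $u_2^G \equiv u_2^H \pmod{p^{a_2 + 1}}$, and the range $1 \leq u_2^\GG \leq p^{a_2}$ from condition~\ref{6} forces equality.

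The main obstacle is sub-case (iii), where $o_2 = 1$ and neither branch of \Cref{MIPUesHigherPower} applies as stated. My plan is to adapt the proof of \Cref{MIPUesHigherPower}(2) to this setting, exploiting that the central element $c = b_1^{-\delta p^{m-1}} b_2^{\delta p^{m-o_1}} a$ of \eqref{eq:centerGens} still satisfies an identity of the form $c^{p^{m-t-1}} \equiv \alpha \cdot a^{v_2^\GG p^{m-1}}$ modulo an ideal killed by the canonical projection $\Upsilon^{m-t-1}_\GG$: the $b_1$-factor becomes negligible after enough $p$-powers, provided $m \geq 3$, which is guaranteed by case~\ref{4}(b). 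Verifying that the resulting canonical map has one-dimensional image requires the bound $a_2 \leq o_1 - 1$ supplied by condition~\ref{6} together with a careful estimate of $q^\GG$ via \eqref{lemma:descriptionq}, and constitutes the technical heart of the argument. Once this adapted lemma is in place, the same iterative scheme used in sub-case (i) — starting from $u_2^\GG \equiv 1 \pmod{p^{m-2}}$ and lifting one power at a time until the prescribed range in condition~\ref{6} is reached — concludes the proof.
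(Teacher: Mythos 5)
There is a genuine gap, and it traces back to one missing deduction that the paper makes at the outset and that your proposal never makes: once $m\ge 3$ (the case $m=2$ is settled at once by \Cref{theorem1}\eqref{theorem1.1}, since then $|\gamma_3(\GG)|=p$), condition \ref{4}(b) forces $2=n_2=2m-o_1-o_2'$, so $o_1+o_2'=2m-2$ with both summands strictly less than $m$, whence $o_1=o_2'=m-1$, and consequently $o_1'\le 1$, $o_2\le 1$, $u_2^\GG\equiv 1\bmod p^{m-2}$, and $a_2\le o_1=m-1$ (indeed $a_2\le o_1-o_2=m-2$ when $o_2=1$). Without this, your sub-case (iii) ($o_2=1<o_1$) --- which you call the ``technical heart'' and propose to settle by an unproven adaptation of \Cref{MIPUesHigherPower} to $o_2=1$ --- is left genuinely open; but no new lemma is needed there at all, since $1\le u_2^\GG\le p^{a_2}\le p^{m-2}$ together with $u_2^\GG\equiv 1\bmod p^{m-2}$ gives $u_2^G=u_2^H=1$ outright (one must also rule out alternative \ref{6}(b), which you do not address although $o_1o_2\ne 0$ makes it a priori possible).

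Two further steps do not work as written. First, the ``iterated application'' of \Cref{MIPUesHigherPower}\eqref{MIPUesHigherPower2} is invalid: the conclusion of that lemma is $u_2^G\equiv u_2^H\bmod p^{t+1}$, \emph{not} $u_2^G\equiv u_2^H\equiv 1\bmod p^{t+1}$, so it cannot be fed back in as the hypothesis at level $t+1$; the only congruence to $1$ available is $u_2^\GG\equiv 1\bmod p^{m-2}$ from condition \ref{4}(b), so only the single application with $t=m-2$ is legitimate (it happens to suffice because $a_2\le m-1$). Moreover your verification of $t\le 2m-1-q^\GG$ is backwards: you invoke the \emph{lower} bound $q^\GG\ge 2m-o_1$ from \eqref{lemma:descriptionq} when an \emph{upper} bound $q^\GG\le m+1$ is what is required, and that upper bound actually fails when $o_1'=1$ and $n_1>m$ (then $q^\GG=n_1+1\ge m+2$); that sub-case must be treated separately, e.g.\ by checking that there $a_2=m-2$ so the congruence from \ref{4}(b) already pins down $u_2^\GG=1$. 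Second, your dismissal of alternative \eqref{theorem2.2} of \Cref{theorem2} via ``inspection of condition \ref{6}'' is not a proof: when \ref{6}(a) holds, condition \ref{6} says nothing about $u_1$. What does work is that $o_2'=o_1'+n_1-2$ together with $o_2'\le m-o_2\le m-1$ and $n_1\ge m+1$ (from $o_1o_2>0$ and \eqref{eq:n1=m}) forces $o_1'=0$, hence $a_1=0$ and $u_1^G=u_1^H=1$ by condition \ref{5}.
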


\begin{proof}
Recall that we are assuming that \eqref{Assumptions} holds, so $m\ge 2$ and we may assume that $n_2=2$. 
If $m=2$ then $|\gamma_3(\GG)|=p$, and hence the result follows from \Cref{theorem1}\eqref{theorem1.1}. 
Thus we assume $m\geq 3$. 
Then $n_2<m$, and by  condition \ref{4}, $2=n_2=2m-o_1-o_2'$ and $u_2^\GG \equiv 1 \mod p^{m-2}$. Then $2(m-1)=o_1+o_2'$. 
Since $o_1<m$ by condition \ref{2}, and $o_2'<m$ by \eqref{Assumptions}, we derive that $o_1=o_2'=m-1$. 
As $o_i+o_i'\leq m$ by condition \ref{2}, also $o_1'\leq 1$ and $o_2\leq 1$.
Therefore $1\leq u_1^\GG \leq p$.
Then \Cref{theorem2} implies that $u_1^G=u_1^H$ or condition \eqref{theorem2.2} in the theorem holds. In the latter case $o_1o_2>0$  and $n_1+o_1'=n_2+o_2'=m+1$.
The former implies $n_1>m$ by \eqref{eq:n1=m}. Therefore  $o_1'=0$, so $u_1^G=1=u_1^H$.

Observe that $1\leq u_2^\GG \leq p^{a_2}$, for otherwise, $o_2=1$ and $n_1+o'_1-m-1 = n_1-n_2+o'_1-o'_2=0<a_1\le o'_1\le 1$ by condition \ref{6}, so $n_1=m$ and $o_1o_2>0$, in contradiction with \eqref{eq:n1=m}.
If $o_2=1$, then $a_2\leq o_1-o_2=m-2$, so $1\leq u_2^\GG \leq p^{m-2}$ and hence $u_2^G=u_2^H$. Thus we assume $o_2=0$.
Suppose that $o_1'=0$.
Then by \eqref{lemma:descriptionq} $q^\GG=n_2+o_2'=m+1$.
Thus $m-2\leq 2m-1-q^\GG =m-2$.
Therefore \Cref{MIPUesHigherPower}\eqref{MIPUesHigherPower2} with $t=m-2$ yields that $u_2^G\equiv u_2^H\mod p^{m-1}$, i.e., $u_2^G=u_2^H$.
Now suppose that $o_1'=1$.
Since $n_1\geq m $ by condition \ref{4}, $q^\GG=n_1+1$.
If $n_1>m$, then by condition \ref{6} $a_2=o_2'-o_ 1+\max(0,m+1-n_1)=m-2$ and $1\leq u_2^\GG\leq p^{m-2}$, so $u_2^G=u_2^H$. Hence we assume $n_1=m$. Then $q^\GG=m+1$ and $m-2\leq 2m-1-q^\GG=m-2$. Thus, again \Cref{MIPUesHigherPower}\eqref{MIPUesHigherPower2} with $t=m-2$ yields $u_2^G\equiv u_2^H\mod p^{m-1}$, i.e., $u_2^G=u_2^H$.
\end{proof}

We are finally ready to prove \Cref{theorem1}\eqref{theorem1.2}.
Via the isomorphism $\psi_3$ introduced at the beginning of \Cref{SubsectionTheoremA},  we can assume that $(G')^{p^3}=1=(H')^{p^3}$, i.e., $m\le 3$. 
If $n_2\leq 2$, then the result follows from \Cref{lemma:n2}, so we assume $3\leq n_2$. 
If $|\gamma_3(G)|\leq p$, then the result follows from \Cref{theorem1}\eqref{theorem1.1}. Thus we assume $|\gamma_3(G)|=|\gamma_3(H)|=p^2$, so $m=3$. 
Then $\gamma_3(G)=(G')^{p^{m-\max(o_1,o_2)}}$, by \eqref{gamma}, which implies that $\max(o_1,o_2)=2$. 
By condition \ref{3}, we have three possibilities: $0=o_1<o_2=2$, $0=o_2<o_1=2$ and $1=o_2<o_1=2$.

Suppose that $0=o_1<o_2=2$. Then $u_2^G=1=u_2^H$, by condition \ref{6}. Since $m=3$ and $o_2+o_1'\leq m$ by condition \ref{4}, we have that $o_2'\leq 1$, so $1\leq u_1^\GG \leq p$ for $\GG\in \{G,H\}$. Thus $u_1^G=u_1^H$, by \Cref{theorem2}.

Suppose that $0=o_2<o_1=2$. Then $u_1^G=1=u_1^H$ by condition \ref{5}.  Recall that   $m=3\leq n_2$. Then  $o_2'+2=o_2'+o_1\leq m=3$ by condition \ref{4}, so $o_2'\leq 1$. Hence $1\leq u_2^\GG \leq p$ for $\GG \in\{G; H\}$, by condition \ref{6}. Thus $u_2^G=u_2^H$ by \Cref{theorem2}.
	
Finally suppose that $1=o_2<o_1=2$. 
By condition \ref{2},  $o_1'\leq 1$, and since $n_2\geq m$, by condition \ref{4}, $o_2' \leq 1$.
Then $1\leq u_1^\GG \leq p$. 
Observe that neither condition \eqref{theorem2.2} in \Cref{theorem2} nor condition \ref{6}(b) holds since, by  condition \ref{3}, in any of these cases $1=o_1-o_2<n_1-n_2=o_2'-o_1' \leq 1$, a contradiction. 
Therefore $1\leq u_2^\GG \leq p$ and, by \Cref{theorem2}, we derive that  $u_1^G=u_1^H$ and $u_2^G=u_2^H$.

\section{Applications to groups of small order}\label{SectionApplications}

Recall that $p$ is an odd prime and $k$ is the field with $p$ elements.
We first solve the Modular Isomorphism Problem for our target groups when their order is at most $p^{11}$.

 \begin{proposition}\label{corollary}
Let $G$ be a $2$-generated $p$-group with cyclic derived subgroup such that $|G|\leq  p^{11}$. If $kG\cong kH$ for some group $H$, then $G\cong H$.
\end{proposition}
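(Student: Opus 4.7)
The plan is to combine all the reduction tools developed earlier to cut the problem down to a very short list of invariant profiles, and then eliminate each by Theorem~\ref{theorem2} together with \Cref{MIPUesHigherPower}. First, classical results on metacyclic groups and on $2$-generated $p$-groups of nilpotency class at most $2$, together with the observation that $o_i'=0$ forces $u_i^G=u_i^H=1$ via conditions \ref{5} and \ref{6}, allow one to assume that $G$ is non-metacyclic, of class at least $3$, and that \eqref{Assumptions} together with $\max(o_1',o_2')>0$ hold. Then \Cref{theorem1}\eqref{theorem1.2} lets me restrict to the case $m\geq 4$ (otherwise $(G')^{p^3}=1$ and the theorem is immediate), while \Cref{GG'p2}, equivalently \Cref{lemma:n2}, lets me restrict to $n_2\geq 3$. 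Combining these with $n_1\geq m$ from condition \ref{4} and the hypothesis $|G|=p^{m+n_1+n_2}\leq p^{11}$ forces $m=n_1=4$ and $n_2=3$.

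With these values fixed, the next step is to restrict the remaining invariants. Since $n_1=m$, \eqref{eq:n1=m} yields $o_1o_2=0$. The inequality $n_2<m$ rules out condition \ref{4}(a), so \ref{4}(b) must apply, which gives $o_1+o_2'=2m-n_2=5$ together with $u_2^G,u_2^H\equiv 1\bmod p$. The bound $o_2'\leq m-o_2\leq 4$ forces $o_1\geq 1$, hence $o_2=0$. Inserting these values into condition \ref{3}(b) and the bound $o_1'\leq m-o_1$ singles out exactly three invariant profiles, namely
$$(o_1,o_1',o_2')\in\{(1,3,4),\,(2,2,3),\,(3,1,2)\}.$$

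In each of the three remaining cases a direct computation yields $a_1=0$, so condition \ref{5} forces $u_1^G=u_1^H=1$. It thus remains to pin down $u_2^G$ and $u_2^H$. As $o_1o_2=0$, the alternative \eqref{theorem2.2} in \Cref{theorem2} cannot occur, so one always has $u_2^G\equiv u_2^H\bmod p$. In the first profile $a_2=1$, so $u_2^G,u_2^H\leq p$ together with $u_2^\GG\equiv 1\bmod p$ forces $u_2^G=u_2^H=1$ and finishes that case.

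The main obstacle lies in the two remaining profiles, where $a_2=2$ and \Cref{theorem2} alone leaves up to $p$ possible values for $u_2^\GG$. Here I would apply \Cref{MIPUesHigherPower}\eqref{MIPUesHigherPower2}: the hypotheses $o_2=0$ and $n_2=2m-o_1-o_2'$ hold by construction, and the numerical constraint $t\leq 2m-1-q^G$ is satisfied for $t=1$ because \eqref{lemma:descriptionq} yields $q^G=6$ and $q^G=5$ in the two remaining profiles respectively. Invoking the lemma with $t=1$ sharpens the mod-$p$ congruence to $u_2^G\equiv u_2^H\bmod p^2$; combined with $u_2^\GG\equiv 1\bmod p$ (which in particular rules out the value $p^2$) and $u_2^\GG\leq p^2$, this forces $u_2^G=u_2^H$, completing the proof.
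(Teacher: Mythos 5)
Your proof is correct and follows essentially the same route as the paper's: reduce to $m=n_1=4$, $n_2=3$ via \Cref{theorem1}\eqref{theorem1.2} and \Cref{lemma:n2}, extract $o_1+o_2'=5$, $o_2=0$, $u_1^\GG=1$ and $u_2^\GG\equiv 1\bmod p$ from the classification conditions, and conclude with \Cref{MIPUesHigherPower}\eqref{MIPUesHigherPower2} applied with $t=1$. The only (harmless) discrepancies are that your first profile $(o_1,o_1',o_2')=(1,3,4)$ has $o_2'=m$ and is thus already excluded by the standing assumption $\max(o_1',o_2')<m$ from \eqref{Assumptions} (the paper instead leaves $o_1'$ unspecified, only bounded by $m-o_1$, and treats the two surviving values of $(o_1,o_2')$ uniformly), and that the congruence $u_2^G\equiv u_2^H\bmod p$ is given unconditionally by \Cref{theorem2}, not as a consequence of alternative \eqref{theorem2.2} failing.
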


\begin{proof}
We may assume that $G$ is neither metacyclic nor of class at most $2$. Thus conditions \eqref{Assumptions} are satisfied and hence we can use all the results in previous sections. 
Let $G$ and $H$ be a $2$-generated $p$-groups ($p>2$) with cyclic derived subgroup of order at most $p^{11}$, with $kG\cong kH$ and the usual notation $\inv(\GG)=(p,m,n_1,n_2,o_1,o_2,o_1',o_2',u_1^\GG,u_2^\GG)$ for $\GG\in\{G,H\}$. 
If $m\leq 3$, the result follows from \Cref{theorem1}\eqref{theorem1.1}. Thus we assume $m>3$. Then $n_1\geq m>3$ by condition \ref{4}. We can assume that $n_2\geq 3$ by \Cref{lemma:n2}. Thus  $|\GG|=p^{n_1+n_2+m}=p^{11}$. Therefore $n_2=3$ and $m=n_1=4$. 
As $n_2<m$,    by condition \ref{4} $u_2^\GG \equiv 1 \mod p $ and  $8-o_1-o_2'=2m-o_1-o_2'=n_2=3$, so $o_1+o_2'=5$.  
Since $o_1<m$ and $o_2'<m$ because $\GG$ is not metacyclic, we derive that $\{o_1,o_2'\}=\{3,2\}$.  Then, by \eqref{eq:n1=m}, $o_2=0$. Thus $u_1^G=1=u_1^H$.  It also follows that $a_2\leq 2$, so $1\leq u_2^\GG \leq p^2$. Since $2\leq  o_1 $, by condition \ref{4}, $o_1'\leq 2$. Thus $q^\GG\leq \max(n_1+o_1',n_2+o_2') \leq 6$. Write $t=1$, so $t=1= 2m-1-q^\GG$. Therefore, by \Cref{MIPUesHigherPower}\eqref{MIPUesHigherPower2}, $u_2^G\equiv u_2^H\mod p^2$. Thus $u_2^G=u_2^H$.
\end{proof}

For groups of order $p^{12}$, we can solve the Modular Isomorphism Problem except for $p-2$ families of groups of size $p$ each one:
\begin{proposition}
Let $G$ be a $2$-generated finite $p$ group with cyclic derived subgroup and $|G|\leq p^{12}$. If $kG\cong kH$ for some group $H$, then one of the following holds:\begin{enumerate}
	\item $G\cong H$. 
	\item There exist $i\in \{1,\dots,p-2\}$ and $u_1^G,u_1^H \in \{ i+jp: 0\leq j \leq p-1   \}$ such that 
	\begin{align*}
		\inv(G)&=(p,4,4,4,0,2,2,2,u_1^G,1) \text{ and} \\
		\inv(H)&=(p,4,4,4,0,2,2,2,u_1^H,1).
	\end{align*}
\end{enumerate}
\end{proposition}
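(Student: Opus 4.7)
Proof plan.

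The plan is to reduce to a small list of invariant configurations via the results already established and then handle each case by combining \Cref{theorem1}, \Cref{theorem2}, and \Cref{MIPUesHigherPower}. First I would assume, as in the previous propositions, that $G$ is neither metacyclic nor of class $\leq 2$, so assumption \eqref{Assumptions} is in force and the apparatus of Section~\ref{SectionApp2Gen} applies. Invoking \Cref{corollary} reduces to $|G|=p^{12}$, i.e.\ $m+n_1+n_2=12$; \Cref{theorem1}\eqref{theorem1.2} then forces $m\ge 4$, and \Cref{lemma:n2} forces $n_2\ge 3$. Together with $n_1\ge m$ (condition \ref{4}), this leaves exactly
$$(m,n_1,n_2)\in\{(4,4,4),(4,5,3)\}.$$
For each triple I would enumerate the quadruples $(o_1,o_2,o'_1,o'_2)$ compatible with \ref{2}--\ref{4}, compute $a_1,a_2$ from \ref{5}--\ref{6}, and apply \Cref{theorem2} and \Cref{MIPUesHigherPower} (controlled by \eqref{lemma:descriptionq}) to pin down $u_1^\GG,u_2^\GG$ modulo as high a power of $p$ as possible.

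The exceptional family originates from the triple $(4,4,4)$. There $n_1-n_2=0$ forces condition~\ref{3}(a), so $o_1=0$; in particular $a_2=0$, $u_2^\GG=1$, the exceptional branch \Cref{theorem2}\eqref{theorem2.2} cannot fire, and \Cref{theorem2} gives $u_1^G\equiv u_1^H\pmod p$ unconditionally. A finite check over the quadruples $(0,o_2,o'_1,o'_2)$ compatible with \ref{2}--\ref{4} shows $a_1\le 1$ in every case except $(0,2,2,2)$, where $a_1=2$; for $a_1\le 1$, $u_1^\GG\in\{1,\dots,p-1\}$ and the mod-$p$ congruence forces $u_1^G=u_1^H$. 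In the distinguished quadruple $(0,2,2,2)$, $u_1^\GG$ ranges over the units of $\mathbb{Z}/p^2\mathbb{Z}$, grouped mod $p$ into the cosets $i+p\mathbb{Z}$, $i\in\{1,\dots,p-1\}$. Here \Cref{MIPUesHigherPower}\eqref{MIPUesHigherPower1} applies with $t=1$: indeed $o_1=0$, $n_1=2m-o_2-o'_1=4$, and by \eqref{lemma:descriptionq} $q^\GG=\max(n_1+o'_1,n_2+o'_2)=6$, so $2m-1-q^\GG=1$. Thus whenever $u_1^G\equiv u_1^H\equiv -1\pmod p$ (the coset $i=p-1$), the lemma upgrades to $u_1^G\equiv u_1^H\pmod{p^2}$ and pins down $u_1^G=u_1^H$; the remaining $p-2$ cosets $i\in\{1,\dots,p-2\}$ are genuinely undetermined and yield precisely the families of item~(2).

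For the triple $(4,5,3)$, condition \ref{4}(b) imposes $o_1+o'_2=5$ and $u_2^\GG\equiv1\pmod p$, and non-metacyclicity gives $(o_1,o'_2)\in\{(3,2),(2,3)\}$. A short enumeration of the compatible $(o_2,o'_1)$ produces the full list of sub-configurations; in each of them $a_1\le 1$, so the non-exceptional branch of \Cref{theorem2} immediately forces $u_1^G=u_1^H$, while \Cref{MIPUesHigherPower}\eqref{MIPUesHigherPower2} (applicable whenever $o_2=0$ and $n_2=2m-o_1-o'_2=3$) upgrades $u_2^G\equiv u_2^H\pmod p$ to $\pmod{p^2}$ and thereby pins down $u_2^\GG\in\{1,1+p,\dots,1+(p-1)p\}$. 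The delicate sub-cases occur under condition~\ref{3}(c) with $o_1o_2>0$, where condition~\ref{6}(b) enlarges $u_2^\GG$ to $\{1,p+1\}$, \Cref{MIPUesHigherPower} does not apply, and \Cref{theorem2}\eqref{theorem2.2} may fire on the cross pairing $(u_2^G,u_2^H)=(1,p+1)$. To dispose of this I would descend to $G/(G')^{p^3}$, a group of order $p^{11}$: the induced isomorphism $k(G/(G')^{p^3})\cong k(H/(H')^{p^3})$ combined with \Cref{corollary} forces $G/(G')^{p^3}\cong H/(H')^{p^3}$, and a direct computation of the canonical invariants of these quotients via \Cref{Main} separates the cross pairing, after which \Cref{MIPUesSpecial} completes the argument.

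The main obstacle lies in this last step: exhaustively listing the condition~\ref{3}(c) sub-configurations for $(4,5,3)$, verifying that the canonical invariants of the $p^{11}$-quotient genuinely distinguish the cross pairings $(u_2^G,u_2^H)=(1,p+1)$ (so that the descent via \Cref{corollary} is conclusive), and confirming that no analogous exceptional family slips through. No canonical subquotient beyond those already developed in Section~\ref{SectionApp2Gen} appears to be required.
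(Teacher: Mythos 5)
Your overall architecture matches the paper's: reduce to $|G|=p^{12}$ with $(m,n_1,n_2)\in\{(4,4,4),(4,5,3)\}$, enumerate the admissible $(o_1,o_2,o'_1,o'_2)$, and combine \Cref{theorem1}, \Cref{theorem2} and \Cref{MIPUesHigherPower}; your identification of the exceptional family $(p,4,4,4,0,2,2,2,u_1^\GG,1)$ and the exclusion of the coset $i=p-1$ via \Cref{MIPUesHigherPower}\eqref{MIPUesHigherPower1} are exactly as in the paper. The gap is in the ``delicate'' sub-case of the triple $(4,5,3)$, namely $(o_1,o_2,o'_1,o'_2)=(2,1,1,3)$ with the cross pairing $u_2^G=1$, $u_2^H=1+p$ (here $u_1^G=u_1^H=1$ is forced, so this pairing is the only remaining obstruction). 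Your proposed descent to $G/(G')^{p^3}$ cannot separate these two groups: in the quotient the parameters become $m=3$, $o_1=1$, $o_2=0$, $o'_1=0$, $o'_2=2$, so condition \ref{6}(b) cannot hold (it requires $o_1o_2\neq 0$) and the normalization parameter $a_2$ of \Cref{Main} equals $1$; hence only $u_2\bmod p$ survives as an invariant of the quotient, and $1\equiv 1+p\bmod p$. The two quotients are therefore isomorphic, \Cref{corollary} applied to them yields no information, and the subsequent appeal to \Cref{MIPUesSpecial} (which needs $u_2^G\equiv u_2^H\equiv 1\bmod p^{o_1+1-o_2}=p^2$) never gets off the ground. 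This is precisely the verification you flagged as ``the main obstacle,'' and it fails.

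The paper resolves this case with a tool absent from your plan: by a result of Ward (\cite{Ward}, see \cite[Lemma~2.7]{Pas77}) the isomorphism type of $\ZZ(G)$ is determined by $kG$, and a direct computation from \eqref{eq:centerGens} shows that for $\inv(G)=(p,4,5,3,2,1,1,3,1,1)$ the center has exponent $p^2$ while for $\inv(H)=(p,4,5,3,2,1,1,3,1,1+p)$ it has exponent $p^3$. Some additional invariant of this kind is genuinely needed here, since none of the canonical subquotients of \Cref{SectionApp2Gen}, nor any quotient of order $\le p^{11}$ covered by \Cref{corollary}, sees $u_2$ beyond its residue mod $p$ in this configuration. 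You should also double-check your blanket claim that \Cref{MIPUesHigherPower}\eqref{MIPUesHigherPower2} disposes of the $(o_1,o'_2)=(3,2)$ sub-cases: its hypothesis requires $o_2=0$, whereas $o_2=2$ is admissible there, so those sub-cases need a separate argument as well.
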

\begin{proof}
By \Cref{theorem1} and \Cref{corollary}, we may assume that $|G'|>p^3$ and $|G|=p^{12}$. Moreover we can assume that neither $G$ nor $H$ is metacyclic nor of class at most $2$. 
With the notation of \Cref{theorem2}, $\inv(G)$ equals $\inv(H)$ except the last two entries, $(u_1^\GG, u_2^\GG)$, where $\GG\in\{G,H\}$. Moreover, we can assume $n_2\geq 3$ by \Cref{lemma:n2}. Then either $n_2=3$, $m=4$ and $n_1=5$, or $n_2=m=n_1=4$.

\underline{Suppose  that $m=4$, $n_1=5$ and $n_2=3$.} By  condition \ref{4}, $u_2^\GG \equiv 1 \mod p$ and $3=n_2=2m-o_1-o_2'$, so $5=o_1+o_2'$, and hence $\{o_1,o_2'\}=\{2,3\}$ because $o_1<m$ and $o_2'<m$.

Suppose that $o_2'=3$. Then $o_1=2$,   $o_2\leq m-o_2'=1$ and $o_1' \leq m-o_1=2$. Assume that $o_2=1$. Then $a_2\leq o_1-o_2=1$. If condition \ref{6}(b) does not hold, then \Cref{theorem2} yields $u_2^G=u_2^H$. Thus suppose this condition holds.   Then $u_1^\GG\equiv 1 \bmod p$ and $5+o_1'=n_1+o_1'=n_2+o_2'=6$, so $o_1'=1$.  
Hence $1\leq u_1^\GG \leq p$, and we get $u_1^G=u_1^H=1$.  
Moreover $a_2=\min(o_1-o_2,o_2'-o_1')=1$. Thus $u_2^\GG\in\{1,1+p\}$. Summarizing, after exchanging $G$ and $H$, if necessary,
\begin{align*}
	\inv(G)&=(p,4,5,3,2,1,1,3,1, 1) ; \\
	\inv(H)&= (p,4,5,3,2,1,1,3,1, 1+p) .
\end{align*}
But then a straightforward computation, using \eqref{eq:centerGens}, shows that $\ZZ(G)$ has exponent $p^2$ while the exponent of $\ZZ(H)$ is $p^3$, in contradiction with a result of Ward \cite{Ward} (see \cite[Lemma~2.7]{Pas77}). 
 Now assume $o_2=0$. Then $a_1=0$, so $u_1^G=1=u_1^H$. Observe that $a_2=\min (o_1, 3-o_1') \leq 2$. Moreover $3-o_1'=o_2'-o_1'\leq n_1-n_2=2$, so $1\leq o_1'$. If $o_1'=1$ then $q^\GG= 6$, and setting $t=1= 2m-1-q^\GG $, \Cref{MIPUesHigherPower}\eqref{MIPUesHigherPower2} yields $u_2^G=u_2^H$. 
Otherwise, i.e. if $o_1'\geq 2$, then $a_2\leq 1$, and $u_2^G=u_2^H$ by \Cref{theorem2}.

Now suppose that $o_2'=2$. Then $o_1=3$, $o_2\leq m-o_2'=2$ and $o_1'\leq m-o_1=1$. We claim that $u_1^G=u_1^H$. Indeed, if $o_1'=0$ then $u_1^G=1=u_1^H$, and if $o_1'=1$ then condition \eqref{theorem2.2} of \Cref{theorem2} does not hold, and hence that theorem yields the claim.  
Moreover $a_2\leq o_2'=2$ and if condition \ref{6}(b) holds, then $o_2>0$ and $o_1'\ge a_1>0$ so that $a_2\le 1$. 
Thus $1\leq u_2^\GG \leq 2p< p^2$. 
Observe that $q^\GG =\max(5+o_1',5)\leq 6$. 
Then set $t=1\leq 2m-1-q^\GG $, and \Cref{MIPUesHigherPower}\eqref{MIPUesHigherPower2} yields that $u_2^G=u_2^H$.

\underline{Finally, suppose that $m=n_1=n_2=4$.} By condition \ref{3} we have that $o_1=0$. Then $u_2^G=1=u_2^H$. Moreover $a_1=\min(o_1',o_2+o_1'-o_2')$. If $a_1\leq 1$, then $u_1^G=u_1^H$ by \Cref{theorem2}. Thus we assume $a_1\geq 2$,  i.e., $2\leq o_1'\leq 3$ and $2\leq o_1'+o_2-o_2'$. If $o_2\leq 1$ then $|\gamma_3(\GG)|\leq p$, and $G\cong H$ by \Cref{theorem1}\eqref{theorem1.1}. 
Thus we suppose  $o_2\geq 2$. Since $o_1'+o_2\leq m=4$, we derive that $o_1'=o_2=2$. Hence $a_1=o_1'=2$, and $o_2'\geq o_1'=2$, by condition \ref{3}. Since $o_2+o_2'\leq m =4$, necessarily $o_2'=2$. Hence we have that
$$ \inv(\GG)=(p,4,4,4,0,2,2,2,u_1^\GG,1)$$
with $1\leq u_1^\GG  \leq p^2$. 
Moreover, by \Cref{theorem1}\eqref{theorem1.2}, we have that $u_1^G \equiv u_1^H \mod p$. Hence there is an integer $1\leq i\leq p-1$ such that $u_1^\GG=i+j^\GG p$, for some integers  $ 0 \leq j^G, j^H\leq p-1$.  Finally, assume $i=p-1$, so $u_1^\GG \equiv -1 \mod p$. Since $q^\GG =6$, setting $t=1=2m-1-q^\GG$, \Cref{MIPUesHigherPower}\eqref{MIPUesHigherPower1} yields that $u_1^G=u_1^H$.
 \end{proof}

 \begin{remark}
 	 Observe that \Cref{theorem2} shows that $kG\cong kH$ implies $u_1^G\equiv u_1^H\mod p$ in almost all situations. A  pair of groups $G$ and $H$ of minimal size with $u_1^G\not \equiv u_1^H \mod p$ and not covered by this theorem (i.e., such that it is still open whether they have isomorphic group algebras or not) consists in groups of order $3^{17}$  and
 	\begin{align*}
 		\inv(G)=(3, 5, 7, 5, 1, 1, 2, 1, 1, 3, 1, 2); \\
 		\inv(H)=(3,5, 7, 5, 1, 1, 2, 1, 1, 3, 2, 2).
 	\end{align*}
 \end{remark}

\noindent\textbf{Acknowledgements}: We are grateful to Mima Stanojkovski, with whom we started the study of the Modular Isomorphism Problem for this class of groups, for useful comments and discussions on early drafts of this paper. \Cref{lemma:JTN}, if not folklore, was written by Sofia Brenner and the first author for another project: we are grateful to her for allowing us to include it here.

 \bibliographystyle{plain}
 \bibliography{MIP}

\end{document}